\def\@tocline#1#2#3#4#5#6#7{\relax
	\ifnum #1>\c@tocdepth 
	\else
	\par \addpenalty\@secpenalty\addvspace{#2}%
	\begingroup \hyphenpenalty\@M
	\@ifempty{#4}{%
		\@tempdima\csname r@tocindent\number#1\endcsname\relax
	}{%
		\@tempdima#4\relax
	}%
	\parindent\z@ \leftskip#3\relax \advance\leftskip\@tempdima\relax
	\rightskip\@pnumwidth plus4em \parfillskip-\@pnumwidth
	#5\leavevmode\hskip-\@tempdima
	\ifcase #1
	\or\or \hskip 1em \or \hskip 2em \else \hskip 3em \fi%
	#6\nobreak\relax
	\hfill\hbox to\@pnumwidth{\@tocpagenum{#7}}\par
	\nobreak
	\endgroup
	\fi}
\tikzset{%
	symbol/.style={%
		draw=none,
		every to/.append style={%
			edge node={node [sloped, allow upside down, auto=false]{$#1$}}}
	}
}
\newtheoremstyle{plain}
  {6pt}   
  {6pt}   
  {\itshape}  
  {0pt}       
  {\bfseries} 
  {.}         
  {5pt plus 1pt minus 1pt} 
  {}          
\newtheoremstyle{definition}
  {6pt}   
  {6pt}   
  {\normalfont}  
  {0pt}       
  {\bfseries} 
  {.}         
  {5pt plus 1pt minus 1pt} 
  {}          
\theoremstyle{plain}
\newtheorem*{thm*}{Theorem}
\newtheorem{thm}{Theorem}[section]
\newtheorem{prop}[thm]{Proposition}
\newtheorem{cor}[thm]{Corollary}
\newtheorem*{cor*}{Corollary}
\newtheorem{lem}[thm]{Lemma}
\theoremstyle{definition}
\newtheorem{defn}[thm]{Definition}
\newtheorem{ex}[thm]{Example}
\newtheorem{rmk}[thm]{Remark}
\numberwithin{equation}{section}
\newcommand{\emphbf}[1]{\emph{\textbf{#1}}}
\DeclareMathAlphabet{\mathpzc}{OT1}{pzc}{m}{it}
\newcommand{\rad}[1]{\radoperator(#1)}
\newcommand{\radsquare}[1]{\radoperator^2(#1)}
\def\mhyphen{{\hbox{-}}}
\newcommand{\Ima}[1]{\operatorname{Im}#1}
\newcommand{\Rest}{\operatorname{Res}}
\newcommand{\Op}{^{\operatorname{op}}}
\newcommand{\Dim}[1]{\operatorname{dim}\,\left( #1\right)}
\newcommand{\Tra}[2]{\operatorname{Tr}\left(#1,#2\right)}
\newcommand{\Modu}[1]{#1{\mhyphen\operatorname{mod}}}
\newcommand{\Homo}[3]{\operatorname{Hom}_{#1}\left( #2,#3\right)  }
\newcommand{\Exte}[4]{\operatorname{Ext}_{#1}^{#2}\left( #3,#4\right)  }
\DeclareMathOperator{\radoperator}{rad}
\DeclareMathOperator{\Kopf}{top}
\DeclareMathOperator{\Hom}{Hom}
\DeclareMathOperator{\Ext}{Ext}
\DeclareMathOperator{\End}{End}
\DeclareMathOperator{\add}{add}
\DeclareMathOperator{\proj}{proj}
\DeclareMathOperator{\op}{\normalfont{op}}
\DeclareMathOperator{\ess}{ess}
\DeclareMathOperator{\Z}{\mathbb{Z}}
\DeclareMathOperator{\Znn}{\mathbb{Z}_{\geq 0}}
\DeclareMathOperator{\N}{\mathbb{N}}
\DeclareMathOperator{\Ker}{Ker}
\begin{document}

\title{Exact Borel subalgebras, idempotent quotients and idempotent subalgebras}

\author{Teresa Conde}

\address{Teresa Conde, Faculty of Mathematics, University of Bielefeld, Universit\"atsstra{\ss}e 25, 33615 Bielefeld, Germany}
\email{tconde@math.uni-bielefeld.de}

\author{Julian K\"ulshammer}
\address{Julian K\"ulshammer, Department of Mathematics, Uppsala University, Box 480, 751 06 Uppsala, Sweden}
\email{julian.kuelshammer@math.uu.se}

\date{\today}

\thanks{Teresa Conde acknowledges support by the Deutsche Forschungsgemeinschaft (DFG, German Research Foundation) -- Project-ID 491392403 - TRR 358. Julian K\"ulshammer acknowledges support by the Swedish Research Council -- registration number 2024-05357. He would also like to thank Henning Krause for posing a question at ICRA 2018 in Prague that lead to the present article. Furthermore, he thanks Walter Mazorchuk for answering some questions about BGG category $\mathcal{O}$. In addition, he would like to thank Anna Rodriguez Rasmussen for providing the computation for the regular exact Borel subalgebra of the principal block of $\mathbb{A}_1\times \mathbb{A}_1$. Lastly, both authors would like to thank the referee for a careful read and several suggestions on how to improve the exposition of the paper.}

\begin{abstract}
This article studies the compatibility of Koenig's notion of an exact Borel subalgebra of a quasi-hereditary or, more generally, standardly stratified algebra with taking idempotent subalgebras or quotients. As an application, we provide bounds for  the multiplicities of indecomposable projectives in the principal blocks of BGG category $\mathcal{O}$ having basic regular exact Borel subalgebras.  
\end{abstract}

\maketitle

\tableofcontents

\section{Introduction}

The term `idempotent' was coined by Peirce in \cite{Pei81} in his work on classifying algebras of dimension smaller than $6$. Already in his work, the importance of idempotents for the study of decompositions was realised. By now, many variants of such decompositions are known and well-studied: central idempotents in rings give rise to block decompositions; idempotents in an algebra give rise to a decomposition into its projective direct summands; idempotents in an endomorphism ring correspond bijectively to decompositions of a module; an idempotent in an algebra yields a recollement of module categories---an algebraic analogue of Grothendieck's $6$-functor formalism; algebras with sufficiently many idempotents provide an alternative way to speak about $\Bbbk$-linear categories. 

In addition, many classes of algebras come with a (natural) set of idempotents, for example (admissible quotients of) path algebras of quivers, Graham and Lehrer's cellular algebras, and  Beilinson--Lusztig--MacPherson's idempotented quantum groups. The algebras of interest to us are quasi-he\-re\-di\-ta\-ry algebras, introduced in \cite{Sco87, CPS88}, and their generalisations to standardly stratified algebras (see \cite{CPS96} and independently \cite{D96}). Examples of these classes of algebras include blocks of BGG category $\mathcal{O}$ and generalisations (see \cite{FKM01}), Schur algebras, as well as algebras of global dimension smaller than or equal to two. There are several alternatives to define these notions, one of which is ring-theoretical using a chain of idempotent ideals, called heredity chain in the case of quasi-hereditary algebras and a standard stratification in the case of standardly stratified algebras. 

Given an idempotent $e$ in a standardly stratified algebra $A$, in light of the inductive definition in terms of idempotent ideals, it is natural to ask for conditions under which the idempotent subalgebra $eAe$ and the idempotent quotient $A/AeA$ are standardly stratified. Several authors have investigated this topic yielding different criteria starting with Dlab and Ringel in \cite[Theorem 1]{DR89} in the case of quasi-hereditary algebras. More recently, slightly different criteria have been investigated by Gao, Koenig and Psaroudakis in \cite[Theorem 2.1]{GKP19}. Our own variant, generalised to standardly stratified algebras, is as follows.

\begin{thm*}
Let $A$ be a left standardly stratified algebra and let $e\in A$ be an idempotent. The following conditions are equivalent:
\begin{itemize}
\item There exists a left standard stratification of $A$ in which the ideal $AeA$ appears.
\item The support of $e$ is a coideal of the partially ordered set of isomorphism classes of simples, equipped with the essential order (i.e.~the coarsest partial order on the simples yielding the same standard modules).
\item The $A$-module $A/AeA$ has a $\Delta$-filtration while the $A$-module $D(A/AeA)$ has a $\bar{\nabla}$-filtration.
\end{itemize}
In this case, the algebras $A/AeA$ and $eAe$ are left standardly stratified algebras. 
\end{thm*}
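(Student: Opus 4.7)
The plan is to prove the three conditions equivalent cyclically as $(1) \Rightarrow (3) \Rightarrow (2) \Rightarrow (1)$, and then deduce the concluding statement from $(1)$. Throughout, let $\Lambda$ denote the set of isomorphism classes of simple $A$-modules and $\mathrm{supp}(e) \subseteq \Lambda$ the labels of the simple summands of $Ae$.

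The implication $(1) \Rightarrow (3)$ is the routine part: if $AeA$ appears in a left standard stratification, then the top portion of the chain (above $AeA$) exhibits $A/AeA$ as an iterated extension of standard modules $\Delta(i)$ with $i \in \Lambda \setminus \mathrm{supp}(e)$, giving a $\Delta$-filtration. Dually, passing to the right-module structure (or applying $D$ to the chain in the opposite algebra), $D(A/AeA)$ acquires a filtration by proper costandard modules $\bar{\nabla}(i)$ for the same indices. The implication $(2) \Rightarrow (1)$ follows from the very definition of the essential order: if $\mathrm{supp}(e)$ is a coideal, one may choose a linear extension of the essential order whose top segment is exactly $\mathrm{supp}(e)$. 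By the defining property of the essential order, any such linear extension is admissible, so it produces a standard stratification of $A$, and the ideal generated by the idempotents labelled by $\mathrm{supp}(e)$ in this chain is precisely $AeA$.

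The main obstacle is $(3) \Rightarrow (2)$. The $\Delta$-filtration of $A/AeA$ involves only $\Delta(i)$ with $i \notin \mathrm{supp}(e)$, and dually the $\bar{\nabla}$-filtration of $D(A/AeA)$ only involves $\bar{\nabla}(i)$ for such $i$. I would invoke a combinatorial characterization of the essential order in terms of composition factors of $\Delta(i)$ and $\bar{\nabla}(i)$ (likely established earlier in the paper as the coarsest partial order compatible with this composition-factor data) and argue that any $j \in \Lambda$ which is essentially above some $i \in \mathrm{supp}(e)$ must itself lie in $\mathrm{supp}(e)$. The crucial subtlety is that both filtration conditions are jointly needed: in the quasi-hereditary case the $\Delta$-filtration of $A/AeA$ alone suffices since $\Delta$ and $\bar{\nabla}$ coincide up to duality, but in the standardly stratified setting the two conditions encode complementary information about the essential order, and either one alone admits counterexamples.

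For the concluding sentence, assuming $(1)$: the chain of ideals above $AeA$ descends to make $A/AeA$ left standardly stratified with standard modules $\Delta(i)$ for $i \notin \mathrm{supp}(e)$. Likewise, the chain of ideals below $AeA$ is sent by the idempotent truncation functor $e(-)e$ to a chain of idempotent ideals of $eAe$ whose subquotients are direct sums of $e\Delta(i)$ for $i \in \mathrm{supp}(e)$, yielding a standard stratification of $eAe$.
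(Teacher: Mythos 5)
Your cyclic plan $(1) \Rightarrow (3) \Rightarrow (2) \Rightarrow (1)$ (numbering the three bullets) differs from the paper's route, which establishes bullet~1~$\Leftrightarrow$~bullet~2 and bullet~2~$\Leftrightarrow$~bullet~3 separately. Your treatment of $(2)\Rightarrow(1)$ and $(3)\Rightarrow(2)$ is essentially the paper's own: refine the essential order to a total order with $\mathrm{supp}(e)$ as a top segment and invoke [CPS96] for the stratification, and combine the composition-factor data from both filtration hypotheses to get the coideal property (as the paper does in Lemmas~\ref{lem:comp-third}, \ref{lem:comp-third'} and \ref{lem:comp-fifth}). You also correctly identify the subtlety that both filtrations are jointly needed in the standardly stratified case.

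The genuine gap is in $(1)\Rightarrow(3)$, specifically the $\bar{\nabla}$-filtration of $D(A/AeA)$. You claim this follows ``dually'' by passing to the right-module structure of the stratification chain. But a \emph{left} standard stratification is a one-sided condition: each subquotient $J_k/J_{k-1}$ is a projective \emph{left} $A/J_{k-1}$-module and a direct sum of standard modules $\Delta_\mathtt{j}$ as a left module. Nothing in that data tells you that $J_k/J_{k-1}$, viewed as a \emph{right} $A/J_{k-1}$-module, is a direct sum of $\bar{\Delta}^{A\Op}_\mathtt{j} = D(\bar{\nabla}_\mathtt{j})$'s, which is what you would need for the dualised chain to give a $\bar{\nabla}$-filtration of $D(A/AeA)$. (In the quasi-hereditary setting heredity ideals are projective on both sides, but this symmetry is exactly what breaks for standardly stratified algebras.) In fact, the implication bullet~1~$\Rightarrow$~``$D(A/AeA)\in\mathcal{F}(\bar{\nabla})$'' is true, but the paper's route to it is indirect: it first proves bullet~1~$\Rightarrow$~bullet~2 using BGG reciprocity and [CPS96, Lemma~2.1.5] to track tops of the ideals $J_k$, and only then deduces bullet~3 from bullet~2 via [CPS96, Theorem~2.2.6] and [AHLU00, Lemma~1.3], which show that $A/AeA$ is left standardly stratified with the \emph{same} $\Delta$'s and $\bar{\nabla}$'s, whence both the $\Delta$-filtration of $A/AeA$ (from projectives of $A/AeA$) and the $\bar{\nabla}$-filtration of $D(A/AeA)$ (from injectives of $A/AeA$) drop out. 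You need to close this loop: either establish $(1)\Rightarrow(2)$ first as the paper does, or supply a genuinely two-sided argument for why the given chain dualises correctly.

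One smaller imprecision: you characterise the essential order as ``the coarsest partial order on the simples yielding the same standard modules''. As the paper's Definition emphasises, the essential order for a left standardly stratified algebra is the coarsest order yielding the same standard \emph{and proper costandard} modules; forgetting the $\bar{\nabla}$-half is exactly what makes the two filtration conditions look deceptively interchangeable.
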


Exact Borel subalgebras were introduced by Koenig in \cite{Koe95} in an attempt to abstract the properties of the universal enveloping algebra of a Borel subalgebra of a semisimple Lie algebra $\mathfrak{g}$, as a subalgebra of the universal enveloping algebra of $\mathfrak{g}$. Generalisations to standardly stratified algebras have been proposed in \cite{KM02}, \cite{CZ19}, \cite{BPS23}, and \cite{Got24}. We follow the definitions in the latter two papers, but use a slight generalisation of the notion to non-algebraically closed fields, adapted from \cite{Conde2021}. 
In \cite{KKO14}, together with Koenig and Ovsienko, the second author proved the existence of exact Borel subalgebras for quasi-hereditary algebras up to Morita equivalence. For standardly stratified algebras, variants have been provided in \cite{BPS23} and \cite{Got24}. Furthermore, work of Rodriguez Rasmussen \cite{RR25}, improving on  joint work \cite{KM23} of the second author with Miemietz (see also \cite{BKK20} and \cite{Conde2021b}), proved  uniqueness of exact Borel subalgebras up to conjugation under the additional assumption of (Kleiner--Roiter) regularity, see \cite{KR77} or Definition \ref{def:homological-regular}. Here, we therefore investigate how (regular) exact Borel subalgebras behave under taking idempotent subalgebras and idempotent quotients. 

\begin{thm*}
Let $A$ be a left standardly stratified algebra with an exact Borel subalgebra $B$. Let $e\in B$  be an idempotent supported in a coideal of the partially ordered set of isomorphism classes of simple modules. Then $eBe$ is an exact Borel subalgebra of $eAe$ and $B/BeB$ is an exact Borel subalgebra of $A/AeA$. If $B$ in $A$ is regular exact, then so are $eBe$ and $B/BeB$.  
\end{thm*}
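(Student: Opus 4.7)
The plan is, after using the previous theorem to set up the standardly-stratified structures on all four algebras, to verify the three defining properties of an exact Borel subalgebra in both the idempotent subalgebra and the idempotent quotient cases, and finally to check that regularity is preserved.

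I would begin by applying the previous theorem separately to $B$ and to $A$: since the simples of $B$ and $A$ are in natural bijection via the exact Borel and induction carries $\Delta_B(\lambda)$ to $\Delta_A(\lambda)$, a coideal for the shared indexing poset is a coideal for either essential order. This gives that $eAe$, $eBe$, $A/AeA$ and $B/BeB$ are all left standardly stratified, and identifies the standards as $\Delta_{eAe}(\lambda) = e\Delta_A(\lambda)$, $\Delta_{eBe}(\lambda) = e\Delta_B(\lambda)$ for $\lambda$ in the support of $e$, and $\Delta_{A/AeA}(\lambda) = \Delta_A(\lambda)$, $\Delta_{B/BeB}(\lambda) = \Delta_B(\lambda)$ for $\lambda$ outside the support.

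For the subalgebra case $eBe \subseteq eAe$, the inclusion is clear; projectivity of $eAe$ as a right $eBe$-module follows from $A_B$ being projective by the standard fact that right multiplication by $e$ preserves projectivity; and the property that induction sends standards of $eBe$ to standards of $eAe$ reduces, via an $(eAe, eBe)$-bimodule isomorphism $eA \cong eAe \otimes_{eBe} eB$, to the known $A \otimes_B \Delta_B(\lambda) \cong \Delta_A(\lambda)$. For the quotient case $B/BeB \subseteq A/AeA$, I would first verify that the natural map is injective, i.e.\ $B \cap AeA = BeB$, by identifying $AeA$ as the $A$-bimodule trace in $A$ of the induced standards $A \otimes_B \Delta_B(\lambda)$ for $\lambda$ in the support and then using the exact Borel structure to intersect with $B$ and recover the $B$-bimodule trace $BeB$; the remaining projectivity and standards conditions then pass from the $B \subseteq A$ statements to the quotient, using that $AeA$ acts trivially on $\Delta_A(\lambda)$ for $\lambda$ outside the support.

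Regularity is inherited via the Ext comparisons $\Ext^i_{eBe}(e\Delta_B(\lambda), e\Delta_B(\mu)) \cong \Ext^i_B(\Delta_B(\lambda), \Delta_B(\mu))$ for $\lambda, \mu$ in the support, $\Ext^i_{B/BeB}(\Delta_B(\lambda), \Delta_B(\mu)) \cong \Ext^i_B(\Delta_B(\lambda), \Delta_B(\mu))$ for $\lambda, \mu$ outside the support, together with their analogues for $A$; combining these with regularity of $B$ inside $A$ yields regularity of $eBe$ inside $eAe$ and of $B/BeB$ inside $A/AeA$. The main obstacle I anticipate is the injectivity $B \cap AeA = BeB$ together with the accompanying compatibility of the induction functor with the quotient; both require a careful bimodule analysis of $A$ over $B$ that uses the exact Borel properties essentially.
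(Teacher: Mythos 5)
The overall architecture you propose—identify the standardly stratified structures, verify the Borel conditions in both the subalgebra and quotient cases, then compare $\Ext$-groups for regularity—matches the paper's. However, there are several steps you treat as immediate that are in fact nontrivial and require the specific structure of exact Borel subalgebras in an essential way.

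\textbf{Matching supports.} You apply the compatibility theorem ``separately to $B$ and to $A$'' and then work with $eBe$, $eAe$, $B/BeB$, $A/AeA$ as though $e$ had the same support in $B$ and in $A$. But an idempotent $e\in B$ supported in $Q_0'\subset Q_0$ need \emph{not} have the same support when viewed inside $A$: the paper's Example \ref{ex:noninjective} exhibits exactly this phenomenon. Showing that the two supports agree when $Q_0'$ is a coideal is Lemma \ref{lem:first} of the paper, and its proof relies on the fact that restriction sends $\bar{\nabla}_\mathtt{i}$ to $I_\mathtt{i}^B$ (Proposition \ref{prop:restriction}). Without this, your identifications $\Delta_{eAe}(\lambda)=e\Delta_A(\lambda)$ etc.\ are not justified, because $eAe$ might see simples outside $Q_0'$.

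\textbf{Exactness of $eAe\otimes_{eBe}-$.} You assert that ``right multiplication by $e$ preserves projectivity'' as a standard fact. It is not: for an arbitrary idempotent $e$ in an algebra $B$, the functor $M\mapsto Me$ from right $B$-modules to right $eBe$-modules sends projectives to projectives precisely when $Be$ is projective (equivalently, flat) as a right $eBe$-module, which fails in general. What makes it true here is that $B$ is directed with simple standards and $Q_0'$ is a coideal; this forces every composition factor of $Be$ to be labelled in $Q_0'$ (Lemma \ref{lem:second}), and then one argues via the adjoint pair $(Be\otimes_{eBe}-,\,e(-))$ that $e(-)$ preserves injectives, i.e.\ $Be\otimes_{eBe}-$ is exact. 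This is precisely where the coideal hypothesis enters, and skipping it leaves a genuine gap. Relatedly, the $(eAe,B)$-bimodule isomorphism $eA\cong eAe\otimes_{eBe}eB$ that you invoke is not an obvious identity and is not justified; the paper avoids it by instead factoring the induction $eAe\otimes_{eBe}-$ through $\Modu{B}$ and $\Modu{A}$ via $Be\otimes_{eBe}-$, $A\otimes_B-$, and $\iota(e)(-)$, using $A\otimes_B Be\cong Ae$.

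\textbf{Injectivity of $B/BeB\to A/AeA$.} You correctly single out $B\cap AeA = BeB$ and the compatibility of induction with the quotient as the hard steps, and note that they require a careful analysis of $A$ as a $B$-bimodule. The suggestion to ``identify $AeA$ as the bimodule trace'' of the induced standards is too vague to evaluate, and as stated it is not clear how intersection with $B$ would recover $BeB$. The paper proves $A(BeB)=A\iota(e)A$ by computing $A$ as a right $B$-module explicitly (using $\Rest'(\bar{\Delta}_\mathtt{i}^{A\Op})=P_\mathtt{i}^{B\Op}$, again from Proposition \ref{prop:restriction}) and comparing proper costandard multiplicities; injectivity of $\iota^|$ then follows from the nontrivial structural fact that $\iota\colon B\hookrightarrow A$ splits as a morphism of right $B$-modules (Proposition \ref{prop:normality}). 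This splitting is what your outline is missing.

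\textbf{Regularity.} The $\Ext$ comparisons you list for $B$ and $A$ are correct statements but they are not free: they are the assertion that the Schur functor $e(-)$ and the inclusion $i\colon\Modu{A/AeA}\to\Modu{A}$ give isomorphisms on all higher $\Ext$ between standard modules, which is a consequence of the ideal $AeA$ (resp.\ $BeB$) being stratifying. The paper establishes this via Lemma \ref{lem:X-and-Y-infinity} together with \cite[Theorem 3.2]{APT92}. Simply invoking the isomorphisms without this machinery leaves a gap.

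In short: the high-level plan is the right one and matches the paper, but the proposal treats as standard or obvious exactly those points (support preservation, flatness of $Be$ over $eBe$, injectivity via the splitting, and stratifying-ideal $\Ext$-comparisons) where the exact Borel hypothesis and the coideal hypothesis actually do work.
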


In \cite{Conde2021}, the first author provided an inductive formula for the decomposition multiplicities of the Morita representative of a quasi-hereditary algebra $A$ containing a basic regular exact Borel subalgebra. Her approach uses a particular lower triangular matrix $V_A$. Using the preceding theorem, we prove that given an idempotent $e$ as above, the matrix $V_A$ is block lower triangular with block diagonal entries given by $V_{A/AeA}$ and $V_{eAe}$. 

Already in his first paper on exact Borel subalgebras \cite{Koe95}, Koenig proved that for each block $\mathcal{O}_\chi$ of BGG category $\mathcal{O}$, there is an algebra $A_\chi$ with an exact Borel subalgebra such that $\Modu{A_\chi}\simeq \mathcal{O}_\chi$. The notion of regularity was not considered at the time. It is also open whether $A_\chi$ can be chosen to be basic. As an application of our results, we obtain that the Morita representatives of $A_\chi$ admitting regular exact Borel subalgebras have high decomposition multiplicities. 

\begin{cor*}
Let $\mathfrak{g}\neq \mathfrak{sl}_2$ be a finite-dimensional simple complex Lie algebra. Let $A$ be the algebra such that $\Modu{A}$ is equivalent to the principal block of BGG category $\mathcal{O}$ and $A$ has a basic regular exact Borel subalgebra. Then there exists a projective indecomposable module whose multiplicity in $A$ is at least $9$. 
\end{cor*}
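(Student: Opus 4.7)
The plan is to combine the block-lower-triangular structure of $V_A$ established in the paper (as a consequence of the second theorem of the introduction) with Rodriguez Rasmussen's explicit computation, acknowledged above, of the regular exact Borel subalgebra of the principal block of type $\mathbb{A}_1\times \mathbb{A}_1$, i.e.\ of $\mathfrak{sl}_2\oplus\mathfrak{sl}_2$.

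First, I construct a suitable idempotent. The simples of the principal block $\mathcal{O}_\chi(\mathfrak{g})$ are indexed by the Weyl group $W$, and by Kazhdan--Lusztig theory the essential order on them coincides with the Bruhat order. Since $\mathfrak{g}\neq\mathfrak{sl}_2$ is simple, $\mathrm{rank}(\mathfrak{g})\geq 2$, so $W$ admits two distinct simple reflections $s_i,s_j$. The subset $\{e,s_i,s_j,s_is_j\}\subseteq W$ is a lower set in the Bruhat order, since $s_i$ and $s_j$ are each subexpressions of $s_is_j$, and its induced order is isomorphic to the Boolean lattice of rank~$2$, matching the Bruhat order of the Weyl group of $\mathfrak{sl}_2\oplus\mathfrak{sl}_2$. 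Let $\varepsilon\in B$ be an idempotent supported on the complementary coideal.

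Second, I apply the second theorem of the introduction: $B/B\varepsilon B$ is a basic regular exact Borel subalgebra of $A/A\varepsilon A$. By the block-lower-triangular decomposition of the matrix $V_A$ established in the paper, $V_{A/A\varepsilon A}$ is a diagonal block of $V_A$; in particular, every diagonal entry of $V_{A/A\varepsilon A}$ equals a diagonal entry of $V_A$, and hence the multiplicity of a projective indecomposable summand of $A$. It thus suffices to produce a diagonal entry of $V_{A/A\varepsilon A}$ of value at least~$9$.

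Third, I identify $V_{A/A\varepsilon A}$ with the $V$-matrix of the Morita representative of the principal block of $\mathfrak{sl}_2\oplus\mathfrak{sl}_2$ admitting a basic regular exact Borel subalgebra, and invoke Rodriguez Rasmussen's computation to exhibit the required diagonal entry. When $s_i$ and $s_j$ commute, which one can arrange for every simple $\mathfrak{g}$ outside of types $A_2$, $B_2$, $G_2$ by choosing non-adjacent nodes of the Dynkin diagram, the identification is immediate from the product structure of the principal block. The main obstacle is the rank-$2$ simple types, where $\{e,s_i,s_j,s_is_j\}$ fails to be a subgroup of $W$; there the Serre quotient $A/A\varepsilon A$ must be shown to have the same $V$-matrix as the principal block of $\mathfrak{sl}_2\oplus\mathfrak{sl}_2$. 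Triviality of the Kazhdan--Lusztig polynomials on this $4$-element subposet yields agreement of the standard composition data, and combining this with the uniqueness up to conjugation of regular exact Borel subalgebras \cite{RR25} should force the $V$-matrices to coincide, completing the proof.
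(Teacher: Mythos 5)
Your proposed reduction targets the wrong piece of the algebra and cannot reach the claimed bound. Cutting down to the lower set $\{e, s_i, s_j, s_is_j\}$ produces a quotient $A/A\varepsilon A$ with only four isomorphism classes of simples, and even granting your identification of $V_{A/A\varepsilon A}$ with the $V$-matrix of the principal block of $\mathfrak{sl}_2\oplus\mathfrak{sl}_2$, the paper itself computes $\ell_w=3$ for $w$ maximal in type $\mathbb{A}_1\times\mathbb{A}_1$ (and the relevant multiplicities $[\nabla_\mathtt{j}:L_\mathtt{k}]$, $\dim\Hom(\Delta_\mathtt{j},\Delta_\mathtt{i})$, $[\Delta_\mathtt{i}:L_\mathtt{j}]$ on this $4$-element Boolean poset are all $1$, forcing the maximal row sum of the $4\times 4$ matrix to be exactly $3$). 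Since the proposition in Section \ref{sec:decomposition-multiplicities} only compares $\ell$-values between $A$ and its idempotent (sub)quotient, the best your route can yield is a projective of multiplicity at least $3$ --- this is precisely the bound the paper assigns to the general semisimple case, not the $\geq 9$ bound asserted for simple $\mathfrak{g}$. Concretely, for $\mathfrak{sl}_3$ the maximal multiplicity is attained at $w_0=s_1s_2s_1$, which your lower set deliberately excludes; the information you would need lives in the $*$-block of $V_A$, not in the top-left block you retain.

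The paper's actual argument goes the other way around the recollement. For a simple $\mathfrak{g}$ of rank $\geq 2$, one picks two \emph{adjacent} nodes of the Dynkin diagram, so the derived Levi $\mathfrak{s}$ is itself a rank-$2$ \emph{simple} Lie algebra of type $\mathbb{A}_2$, $\mathbb{B}_2$, or $\mathbb{G}_2$. By \cite[Section 6.2]{CMZ19} there is a Serre subquotient of the principal block of $\mathcal{O}(\mathfrak{g})$ equivalent to the principal block of $\mathcal{O}(\mathfrak{s})$, and its simple labels form a \emph{coideal} of $(W,\leq_{\mathrm{Bruhat}})$; in the algebraic picture this is the idempotent subalgebra $eAe$, not the idempotent quotient. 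Applying the proposition about the multiplicities $\ell_\mathtt{i}$ then reduces the claim to the full rank-$2$ simple blocks (with $6$, $8$, or $12$ simples, not $4$), where the calculation in \cite[Example 1.56]{Kul23} gives $\ell\geq 9$. A further minor slip: the quantities $\ell_\mathtt{i}$ are row sums of $V$, not diagonal entries --- the diagonal entries of $V$ are always $1$ --- so ``produce a diagonal entry of value at least $9$'' would be impossible even in principle.
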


The structure of the paper is as follows. In Section \ref{sec:standardly-stratified} we recall the definition of a standardly stratified algebra and the associated essential order. Section \ref{sec:compatible-idempotents} is devoted to discussing the different possibilities of defining compatibility of an idempotent with the partial order of a standardly stratified algebra. This discussion is accompanied by several examples and culminates in the proof of Theorem \ref{thm:defofcompatibility}, from which our first main theorem follows. In Section \ref{sec:exact-borels} we recall the basic theory of exact Borel subalgebras and discuss certain shortcomings of the definition. In Section \ref{sec:exact-borels-compatible} we discuss compatibility of exact Borel subalgebras with taking idempotent subalgebras and idempotent quotients. In particular, we prove our second main theorem, which is derived from Theorem \ref{thm:last_firstpart} (see also Propositions \ref{prop:compatibility-subalgebra} and \ref{prop:compatibility-quotient}). In the final section, Section \ref{sec:decomposition-multiplicities}, we provide some results regarding the decomposition multiplicities of projectives in the quasi-hereditary algebras containing basic regular exact Borel subalgebras (Propositions \ref{prop:compatibility-matrix} and \ref{prop:last}). These results are then applied to the case of BGG category $\mathcal{O}$ to prove the aforementioned corollary (Corollary \ref{cor:last}). 

\subsection*{Notation and conventions}

Let $\Bbbk$ denote a fixed field. Unless explicitly stated otherwise, the word `algebra' will mean a finite-dimensional $\Bbbk$-algebra and all modules are assumed to be finite-dimensional left modules. Given an algebra $A$, we shall denote the category of (finite-dimensional left) $A$-modules by $\Modu{A}$.

The isomorphism classes of the simple $A$-modules may be labelled by the elements of a finite set $Q_0$. 
We use typewriter font to denote the elements $\mathtt{i}$, $\mathtt{j}$, $\mathtt{k}$,\,\dots\, of $Q_0$. For $\mathtt{i}\in Q_0$ denote the simple $A$-modules by $L_\mathtt{i}$ or $L_\mathtt{i}^A$, and use the notation $P_\mathtt{i}$ or $P_\mathtt{i}^A$ (resp.~$I_\mathtt{i}$ or $I_\mathtt{i}^A$) for the projective cover (resp.~injective hull) of $L_\mathtt{i}$. Write $\{e_\mathtt{i}\mid \mathtt{i} \in Q_0\}$ for a set of (primitive) idempotents in $A$ satisfying $A e_\mathtt{i} \cong P_\mathtt{i}$ for every $\mathtt{i}\in Q_0$. Finally, denote the multiplicity of the simple $L_\mathtt{i}$ as a composition factor of a module $X$ by $[X:L_\mathtt{i}]$. The set $Q_0$ will usually be endowed with a partial order $\unlhd$. Given $\mathtt{i},\mathtt{j}\in Q_0$, we write $\mathtt{i} \lhd \mathtt{j}$ if $\mathtt{i}\unlhd \mathtt{j}$ and $\mathtt{i}\neq \mathtt{j}$. 

The arrows in a quiver shall be composed from right to left. For a vector space $M$, we write $D(M)$ to denote the $\Bbbk$-dual of $M$.

\section{Standardly stratified algebras}
\label{sec:standardly-stratified}

Standardly stratified algebras were introduced by Cline, Parshall, and Scott in \cite{CPS96} (or rather a version based on a preorder instead of a partial order) generalising their notion of quasi-hereditary algebra in \cite{Sco87, CPS88}. Independently, they were also introduced (under the more restrictive condition of a total order) in \cite{D96} under the name $\Delta$-filtered algebras\footnote{The example in \cite[Section 8.2]{FRISK2007} shows that the class of standardly stratified algebras based on a preorder considered in \cite{CPS96} is, `structure-wise', strictly larger than the one in \cite{D96,ADL98, AHLU00}. This general version of standardly stratified algebras is crucially used in \cite{WEBB20084073}.}. Their basic properties were later explored in a series of papers, see e.g.~\cite{ADL98, AHLU00}. In order to define standardly stratified algebras, one needs the notions of standard and costandard module, as well as the corresponding proper versions.

\begin{defn}
	Let $(Q_0,\unlhd)$ be a poset labelling the isomorphism classes of the simple modules over an algebra $A$.
	\begin{enumerate}
		\item The \emphbf{standard module} $\Delta_\mathtt{i}$, $\mathtt{i} \in Q_0$, is the largest quotient of $P_\mathtt{i}$ whose composition factors are all of the form $L_\mathtt{j}$ with $\mathtt{j}\unlhd \mathtt{i}$. 
		\item The \emphbf{proper standard module} $\bar{\Delta}_\mathtt{i}$, $\mathtt{i} \in Q_0$, is the largest quotient $X$ of $\Delta_\mathtt{i}$ with $[X:L_\mathtt{i}]=1$.
		\item The \emphbf{costandard module} $\nabla_\mathtt{i}$, $\mathtt{i} \in Q_0$, is the largest submodule of $I_\mathtt{i}$ whose composition factors are all of the form $L_\mathtt{j}$ with $\mathtt{j}\unlhd \mathtt{i}$. 
		\item The \emphbf{proper costandard module} $\bar{\nabla}_\mathtt{i}$, $\mathtt{i} \in Q_0$, is the largest submodule $X$ of the costandard module $\nabla_\mathtt{i}$ with $[X:L_\mathtt{i}]=1$.
	\end{enumerate}
\end{defn}

The standard and costandard modules, as well as the corresponding proper versions, shall be decorated with a superscript to indicate the ambient algebra whenever this is unclear from the context.

Given an algebra $A$ endowed with a poset $(Q_0, \unlhd)$ labelling the isomorphism classes of simple $A$-modules, denote by $\Delta$ the set of all standard modules and define $\bar{\Delta}$, $\nabla$ and $\bar{\nabla}$ in a similar way. A chain of $A$-submodules whose subquotients are isomorphic to modules in some set $\Theta$ is called a \emphbf{$\Theta$-filtration}, and the (exact) full subcategory of all modules having a $\Theta$-filtration shall be denoted by $\mathcal{F}(\Theta)$. 

The labelling poset $(Q_0, \unlhd)$ is said to be \emphbf{adapted} to $A$ provided that the following holds: for every $A$-module $X$ with simple top $L_\mathtt{i}$ and simple socle $L_\mathtt{j}$ where $\mathtt{i}$ and $\mathtt{j}$ are incomparable elements of $(Q_0, \unlhd)$ there exists $\mathtt{k} \in Q_0$ with $[X: L_\mathtt{k}] \neq 0$ and with $\mathtt{k}\rhd \mathtt{i}$ or $\mathtt{k}\rhd \mathtt{j}$ (equivalently there exists $\mathtt{k}$ with $[X:L_\mathtt{k}]\neq 0$ and $\mathtt{k}\rhd \mathtt{i}$ and $\mathtt{k}\rhd \mathtt{j}$). It follows from \cite[Section 1]{DR92} that any refinement of a labelling poset adapted to $A$ is still adapted to $A$, and the standard and costandard modules with respect to the refinement (as well as their proper versions) coincide with those corresponding to the original poset. For a discussion of equivalent definitions of adapted orders, see \cite{RR23}.

We recall the notions of a standardly stratified algebra and a quasi-hereditary algebra. For introductions into the theory of quasi-hereditary algebras, the reader can consult the survey articles \cite{DR92, KK99}. For standardly stratified algebras, we suggest to start by reading \cite{AHLU00}.   

\begin{defn}
\label{defn:stdd_stratified_algebra}
	Let $(Q_0,\unlhd)$ be a poset labelling the isomorphism classes of the simple modules over an algebra $A$.
	\begin{enumerate}
		\item The algebra $A$ is \emphbf{left standardly stratified} with respect to $(Q_0, \unlhd)$ if, for every $\mathtt{i}\in Q_0$, the kernel of $P_\mathtt{i}\twoheadrightarrow \Delta_\mathtt{i}$ has a filtration whose subquotients are of the form $\Delta_\mathtt{j}$, with $\mathtt{j}\rhd \mathtt{i}$.
		\item The algebra $A$ is \emphbf{right standardly stratified} with respect to $(Q_0, \unlhd)$ if $A\Op$ is left standardly stratified with respect to $(Q_0, \unlhd)$; in other words, if the cokernel of $\nabla_\mathtt{i}\hookrightarrow I_\mathtt{i}$ has a filtration whose subquotients are of the form $\nabla_\mathtt{j}$, with $\mathtt{j}\rhd \mathtt{i}$.
		\item The algebra $A$ is \emphbf{quasi-hereditary} with respect to $(Q_0,\unlhd)$ if it is right standardly stratified and if $\Delta_\mathtt{i}=\bar{\Delta}_\mathtt{i}$ for every $\mathtt{i}\in Q_0$. Equivalently said, quasi-hereditary algebras are the left standardly stratified algebras with $\nabla_\mathtt{i}=\bar{\nabla}_\mathtt{i}$ for every $\mathtt{i} \in Q_0$.
	\end{enumerate} 
\end{defn}

Let us briefly mention that there are several equivalent conditions characterising left and right standardly stratified (resp.~quasi-hereditary) algebras. For example, in \cite[Lemma 2.2]{D96} it is shown that $A$ is left standardly stratified if and only if the indecomposable injective $A$-modules $I_\mathtt{i}$ have an analogous filtration by proper costandard modules. In \cite[Theorem 2.2.3]{CPS96} a characterisation in terms of standard stratifications is provided -- we  will touch on 
 this perspective later on when analysing compatibility of idempotents with standardly stratified structures, see Definition \ref{defn:compatibility}. 

We shall say that an algebra is \emphbf{standardly stratified} if it is either left or right standardly stratified. We will mostly focus on left standardly stratified algebras. Most of the notions introduced for left standardly stratified algebras carry over to right standardly stratified algebras by simply replacing standard modules by proper standard modules, and proper costandard modules by costandard modules. In fact, it is possible to modify many definitions and results in this paper so that the mixed standardly stratified case is accommodated (see \cite[Definition 1.3]{ADL98} and also \cite{BS24,AT24}), but details will not be provided.

\begin{defn}
\label{defn:equivalent}
Two left standardly stratified algebras $(A,Q_0, \unlhd)$ and $(A',Q_0', \unlhd')$ are \emphbf{equivalent} if there is an equivalence of exact categories between the corresponding categories $\mathcal{F}(\Delta^A)$ and $\mathcal{F}(\Delta^{A'})$ of modules filtered by standard modules. Similarly, two right standardly stratified algebras are equivalent if the respective exact categories of modules filtered by proper standard modules are equivalent.
\end{defn}

It follows from the Standardisation Theorem (see \cite[Theorem 2]{DR92} and \cite{ErdmannSaenz03,AT24} for generalisations) that any two equivalent standardly stratified algebras are Morita equivalent and have essentially the same standardly stratified structure. The next example shows why it is crucial that the equivalence of categories in Definition \ref{defn:equivalent} transports the exact structure. 

\begin{ex}
    Consider $A=\Bbbk Q$, where 
    \[Q=
    \begin{tikzcd}[ column sep = small]
        \overset{\mathtt{1}}{\circ} \ar[r] & \overset{\mathtt{2}}{\circ}
    \end{tikzcd}.\]
    Endow $Q_0$ with the usual linear order on $\{\mathtt{1},\mathtt{2}\}$, so that $A$ becomes quasi-hereditary with simple standard modules. Note that $A$ has finite type and its Auslander algebra is given by $A'=\Bbbk Q'/\radsquare{\Bbbk Q'}$, where \[Q'=
    \begin{tikzcd}[ column sep = small]
        \overset{\mathtt{3}}{\circ} \ar[r] & \overset{\mathtt{2}}{\circ} \ar[r] & \overset{\mathtt{1}}{\circ}
    \end{tikzcd}.
    \]
    Endow the simple $A'$-modules with the canonical order on $\{\mathtt{1},\mathtt{2},\mathtt{3}\}$. In this way, $A'$ becomes quasi-hereditary with $\mathcal{F}(\Delta^{A'})=\proj A'$. Note that $\proj A'$ is equivalent to $\Modu{A}=\mathcal{F}(\Delta^A)$. However $\mathcal{F}(\Delta^A)$ and $\mathcal{F}(\Delta^{A'})$ cannot be equivalent as exact categories since the number of isomorphism classes of indecomposable projective objects in each of the respective exact structures does not match (one can also argue with the number of simple objects with respect to each of the exact structures). 
    
    More generally, let $A$ be a representation-directed algebra (see \cite[Section IX.3]{ASS06} for an introduction to the theory), that is a representation-finite algebra whose Auslander algebra $A'$ has an acyclic Gabriel quiver (e.g. a Dynkin quiver with arbitrary orientation). As the Gabriel quiver of $A'$ is acyclic, $A'$ admits a quasi-hereditary structure whose corresponding standard modules are projective, so that $\mathcal{F}(\Delta^{A'})=\proj A'$. On the other hand, as a representation-directed algebra, even the Gabriel quiver of $A$ is acyclic. This means that $A$ admits a quasi-hereditary structure with simple standard modules, so that $\mathcal{F}(\Delta^A)=\Modu{A}$. Now, just as categories, $\proj A'$ and $\Modu{A}$ are equivalent. However, their exact structures only coincide when $A$ is semisimple. (One has $A$ as a projective generator, while the other one has $A'$ as a projective generator.)
\end{ex}

It is not hard to check that the underlying poset of a standardly stratifyied algebra is adapted to the algebra. To see this, note that the argument in the proof of \cite[Proposition 1.4.12]{CondePhD} works for left standardly stratified algebras (see also \cite[Lemma 2.12]{FKR22}) and observe that the notion of adapted order is left-right symmetric. By the previous discussion about adapted orders, one can always refine the underlying poset of a standardly stratified algebra and even assume that it is a total order without changing the standard or the costandard modules (see also \cite[Lemma 8]{Fri04}). We shall not do so as in practice it is often more convenient to work with an honest partial order. Taking this to the extreme, there is a unique minimal partial order with the same standard and proper costandard modules, sometimes called the essential order as defined in \cite[Definition 1.2.5]{Cou20} for quasi-hereditary algebras and then generalised in \cite[Section 2.2]{MP22} to the case of left standardly stratified algebras. 

\begin{defn}
Given a left standardly stratified algebra $(A,Q_0, \unlhd)$, the \emphbf{essential order} $\unlhd_{\ess}$ on $Q_0$ is the minimal partial order such that $\mathtt{j}\unlhd_{\ess} \mathtt{i}$ if  $[\Delta_\mathtt{i}:L_\mathtt{j}]\neq 0$ or $[\bar{\nabla}_\mathtt{i}:L_\mathtt{j}]\neq 0$. The essential order of a right standardly stratified algebra is defined dually by means of proper standard modules and costandard modules.
\end{defn}

To sum up, replacing the underlying poset $(Q_0,\unlhd)$ of a standardly stratified algebra by the corresponding essential order $(Q_0,\unlhd_{\ess})$ or by any refinement of $(Q_0,\unlhd_{\ess})$ does not alter the standardly stratified structure and produces equivalent standardly stratified algebras where the associated equivalence of categories is the identity functor.

\section{Compatible idempotents}
\label{sec:compatible-idempotents}
Let $A$ be an algebra whose isomorphism classes of simples are indexed by the elements of a set $Q_0$. The \emphbf{support} of an idempotent $e\in A$ is defined as
\[\{\mathtt{i}\in Q_0\mid eL_\mathtt{i}\neq 0\}.\]
If $e$ has support $Q_0'\subseteq Q_0$, we say that $e$ is \emphbf{supported} in $Q_0'$. Note that an idempotent $e\in A$ supported in $Q_0'\subseteq Q_0$ does not necessarily satisfy $(1-e)L_\mathtt{i}=0$ for every $\mathtt{i} \in Q_0'$ as the following example shows.

\begin{ex} 
Let $A=M_2(\Bbbk)$ be the algebra of $2\times 2$-matrices. Then there is exactly one simple $A$-module $L$ up to isomorphism. Let $E_{11}$ be the idempotent of $A$ given by the elementary matrix with a $1$ in the upper left corner. Then clearly $E_{11}L\neq 0$, but also $(1-E_{11})L=E_{22}L\neq 0$. 
\end{ex}

The statement does however hold for basic algebras. If $A$ is basic, then $A\cong \bigoplus_{\mathtt{i}\in Q_0} Ae_\mathtt{i}$ with the $Ae_\mathtt{i}$ pairwise non-isomorphic. In this case, $Ae\cong \bigoplus_{\mathtt{i}\in Q_0'} Ae_\mathtt{i}$ where $Q_0'$ is the support of $e$. It follows that $A(1-e)\cong \bigoplus_{\mathtt{j}\in Q_0\setminus Q_0'} Ae_\mathtt{j}$, whence $(1-e)L_\mathtt{i}\cong\Hom_A(A(1-e),L_\mathtt{i})=0$ for all $\mathtt{i}\in Q_0'$.

We recall the definiton of recollement as defined in the context of triangulated categories in \cite{BBD82}. For a survey on the theory of recollements of abelian categories, we refer the reader to \cite{Psa18}.
\begin{defn}
	A \emphbf{recollement} of abelian categories $\mathcal{A}$, $\mathcal{B}$ and $\mathcal{C}$ is a diagram of functors
	\begin{equation}
	\label{eq:recol}
	\begin{tikzcd}[row sep=6.5 em, column sep=6.0 em]
		\mathcal{A} \arrow{r}[description]{i} & \mathcal{B} \arrow{r}[description]{e}\arrow[l,"q"', bend right] \arrow[l, "p",bend left]& \mathcal{C} \arrow[l ,"l"',bend right] \arrow[l, "r",bend left]
	\end{tikzcd}
	\end{equation}
	 such that the following conditions are satisfied:
	\begin{enumerate}
		\item $(q,i,p)$ and $(l,e,r)$ are adjoint triples;
		\item the functors $i$, $l$ and $r$ are fully faithful;
		\item the essential image $\Ima{i}$ of $i$ coincides with the kernel $\Ker{e}$ of $e$.
	\end{enumerate}
\end{defn}
Since the functors $i$ and $e$ in the definition above are simultaneously left and right adjoints, they preserve all limits and colimits and are, in particular, exact. Hence, a recollement is a diagram of functors as in \eqref{eq:recol} where the top part is a colocalisation sequence and the bottom part is a localisation sequence -- we refer to \cite[Definitions 2.1, 2.2]{Kra17} for the notion of localisation and colocatisation sequence. If one restricts to the case where the category in the centre of the diagram is the module category of a finite-dimensional algebra, these are essentially given by an idempotent as follows (see \cite[Corollary 5.5]{PV14} and also \cite[Lemma 2.5]{Kra17}): given an algebra $A$ and an idempotent $e \in A$, we have an associated recollement
    \begin{equation}
	\label{eq:recol-modules}
    \begin{tikzcd}[row sep=6.5 em, column sep=6.0 em]
    \Modu{A/AeA} \arrow{r}[description]{i} & \Modu{A} \arrow{r}[description]{e (-)}\arrow[l,"(A/Ae A)\otimes_A -"', bend right] \arrow[l, "\Homo{A}{A/Ae A}{-}",bend left]& \Modu{eAe} \arrow[l ,"Ae\otimes_{e Ae}-"',bend right] \arrow[l, "\Homo{e A e}{e A}{-}",bend left]
    \end{tikzcd}.
    \end{equation}

In this case, the set $Q_0$ gets partitioned into $Q_0'$, the support of $e$, and $Q_0\setminus Q_0'$. For the corresponding simple modules, by definition, $eL_\mathtt{i}\neq 0$ if and only if $\mathtt{i}\in Q_0'$, for which $eL_\mathtt{i}\cong L_\mathtt{i}^{eAe}$, i.e.~the set $Q_0'$ provides an indexing set for the isomorphism classes of simple $eAe$-modules. The corresponding projective covers and injective envelopes are related, by $P_\mathtt{i}=Ae\otimes_{e Ae}P_\mathtt{i}^{eAe}$ and $I_\mathtt{i}=\Hom_{eAe}(eA, I_\mathtt{i}^{eAe})$, respectively, for $\mathtt{i}\in Q_0'$. On the other hand, $eL_\mathtt{i}=0$ if and only if $L_\mathtt{i}\cong i(L_\mathtt{i}^{A/AeA})$, i.e.~the set $Q_0\setminus Q_0'$ indexes the simple $A/AeA$-modules, cf.~\cite[Proposition 2.9]{Psa18}, and the corresponding projective covers and injective envelopes are related by $P_\mathtt{i}^{A/AeA}=(A/AeA)\otimes_A P_{\mathtt{i}}$ and $I_\mathtt{i}^{A/AeA}=\Homo{A}{A/Ae A}{I_{\mathtt{i}}}$, respectively. 

Later on, we shall assume that $(A, Q_0, \unlhd)$ is standardly stratified and we ask what it means for the idempotent $e$ to be compatible with the standardly stratified structure of $A$. The methods used work very abstractly. We have tried to highlight the generality of the arguments in the proof of Lemma \ref{lem:comp-first}. Essentially, we only need properties of localisation and colocalisation sequences of `good-enough' abelian categories, together with properties of the stratified category in the middle. Here, for partial results, a `good-enough' abelian category could be a locally finite Grothendieck category or its dual version (we refer to \cite{Herzog1997} for details).

\begin{lem}
\label{lem:comp-first}
    Let $A$ be an algebra whose isomorphism classes of simple modules are labelled by the elements of a poset $(Q_0, \unlhd)$ and let $e\in A$ be an idempotent supported in some subset $Q_0'$ of $Q_0$. For every $\mathtt{i}\in Q_0\setminus Q_0'$, the following identities hold:
    \begin{enumerate}
        \item $\Delta_\mathtt{i}^{A/AeA}=(A/Ae A)\otimes_A\Delta_\mathtt{i}$ and $\bar{\Delta}_\mathtt{i}^{A/AeA}=(A/Ae A)\otimes_A\bar{\Delta}_\mathtt{i}$;
        \item $\nabla_\mathtt{i}^{A/AeA}=\Hom_A(A/AeA,\nabla_\mathtt{i})$ and $\bar{\nabla}_\mathtt{i}^{A/AeA}=\Hom_A(A/AeA,\bar{\nabla}_\mathtt{i})$.
    \end{enumerate}
    For $\mathtt{i}\in Q_0'$ and $X$ in $\Modu{A}$, $[X:L_\mathtt{i}]=[eX:L_\mathtt{i}^{eAe}]$. Moreover:
    \begin{enumerate}
        \item[(3)] if $X$ has top $L_\mathtt{i}$, $e X$ has top $L_\mathtt{i}^{eAe}$;
        \item[(4)] if $X$ has socle $L_\mathtt{i}$, $eX$ has socle $L_\mathtt{i}^{eAe}$.
    \end{enumerate}
\end{lem}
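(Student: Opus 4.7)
The plan is to lean on the recollement~\eqref{eq:recol-modules}: the inclusion $i\colon \Modu{A/AeA}\hookrightarrow \Modu A$ has left adjoint $(A/AeA)\otimes_A-$ and right adjoint $\Hom_A(A/AeA,-)$, while $e(-)$ is exact with left adjoint $Ae\otimes_{eAe}-$ and right adjoint $\Hom_{eAe}(eA,-)$. The adjoints preserve the relevant distinguished objects: for $\mathtt{i}\in Q_0\setminus Q_0'$ one has $(A/AeA)\otimes_A P_\mathtt{i}\cong P_\mathtt{i}^{A/AeA}$ and $\Hom_A(A/AeA,I_\mathtt{i})\cong I_\mathtt{i}^{A/AeA}$, since tensoring with $A/AeA$ produces the projective cover of $L_\mathtt{i}$ inside $\Modu{A/AeA}$ and, dually, $\Hom_A(A/AeA,-)$ carves out the largest $A/AeA$-submodule.

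For (1) I plan to check that $(A/AeA)\otimes_A\Delta_\mathtt{i}$ satisfies the defining universal property of $\Delta_\mathtt{i}^{A/AeA}$. It is a quotient of $P_\mathtt{i}^{A/AeA}$ by right exactness, and simultaneously a quotient of $\Delta_\mathtt{i}$ via the adjunction unit, so its composition factors lie among $\{L_\mathtt{j} : \mathtt{j}\unlhd\mathtt{i}\}$. Maximality of $\Delta_\mathtt{i}^{A/AeA}$ yields a surjection $\Delta_\mathtt{i}^{A/AeA}\twoheadrightarrow (A/AeA)\otimes_A\Delta_\mathtt{i}$, while maximality of $\Delta_\mathtt{i}$ combined with the adjunction gives a surjection in the opposite direction; finite-dimensionality then forces both to be isomorphisms. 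The argument for $\bar\Delta_\mathtt{i}$ is identical once one notes that the top $L_\mathtt{i}$ of $\bar\Delta_\mathtt{i}$ survives the tensor (as $L_\mathtt{i}$ is already an $A/AeA$-module when $\mathtt{i}\notin Q_0'$), ensuring $[(A/AeA)\otimes_A\bar\Delta_\mathtt{i}:L_\mathtt{i}^{A/AeA}]=1$. Statement~(2) is the formal dual: the right adjoint $\Hom_A(A/AeA,-)$ produces the largest $A/AeA$-submodule, and the submodule-based universal properties of $\nabla_\mathtt{i}$ and $\bar\nabla_\mathtt{i}$ feed into the parallel back-and-forth inclusions.

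For the multiplicity formula, exactness of $e(-)$ together with $eL_\mathtt{j}\cong L_\mathtt{j}^{eAe}$ for $\mathtt{j}\in Q_0'$ and $eL_\mathtt{j}=0$ otherwise gives the answer by applying $e$ to any composition series of $X$. For (3), apply $e$ to a surjection $P_\mathtt{i}\twoheadrightarrow X$ to obtain $eP_\mathtt{i}\twoheadrightarrow eX$. The crucial identification is $eP_\mathtt{i}\cong P_\mathtt{i}^{eAe}$ as an indecomposable projective $eAe$-module; for this I will choose a primitive idempotent $e_\mathtt{i}\in A$ with $Ae_\mathtt{i}\cong P_\mathtt{i}$ and $ee_\mathtt{i}=e_\mathtt{i}e=e_\mathtt{i}$. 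Such a choice exists because $\mathtt{i}\in Q_0'$ forces $P_\mathtt{i}$ to be a direct summand of $Ae$, so any primitive decomposition of $e$ in $A$ may be arranged to contain $e_\mathtt{i}$. Then $eP_\mathtt{i}=eAe_\mathtt{i}$ is the projective indecomposable $eAe$-module attached to the primitive idempotent $e_\mathtt{i}\in eAe$, whose top is $L_\mathtt{i}^{eAe}$. The quotient $eX$ is non-zero since $eX\twoheadrightarrow eL_\mathtt{i}=L_\mathtt{i}^{eAe}$, so it inherits the simple top $L_\mathtt{i}^{eAe}$. Statement~(4) follows dually, by replacing $P_\mathtt{i}\twoheadrightarrow X$ with $X\hookrightarrow I_\mathtt{i}$ and identifying $eI_\mathtt{i}$ with the indecomposable injective $eAe$-module of simple socle $L_\mathtt{i}^{eAe}$.

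The only moderately delicate point is the choice of primitive idempotent in (3)--(4); it rests on the standard observation that primitive idempotents of $A$ sitting inside $eAe$ exhaust the isomorphism classes of indecomposable direct summands of $Ae$, indexed precisely by $Q_0'$.
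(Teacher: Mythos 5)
Your proof is correct and follows essentially the same overall approach as the paper: both work with the recollement adjoints and identify projectives/injectives under the functors $(A/AeA)\otimes_A-$, $\Hom_A(A/AeA,-)$, and $e(-)$. There are some modest stylistic differences worth noting. For part (1), the paper first establishes the bound $[(A/AeA)\otimes_A X:L_\mathtt{j}^{A/AeA}]\le[X:L_\mathtt{j}]$ via the unit epimorphism, then applies $(A/AeA)\otimes_A-$ to an explicit projective presentation of $\Delta_\mathtt{i}$ and shows the kernel of the induced presentation is generated by projectives labelled $\ntrianglelefteq\mathtt{i}$; your route instead invokes the universal property of $\Delta_\mathtt{i}^{A/AeA}$ and of $\Delta_\mathtt{i}$ directly to get surjections in both directions and concludes by finite-dimensionality. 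Both are valid, and yours is arguably more conceptual. For the multiplicity statement $[X:L_\mathtt{i}]=[eX:L_\mathtt{i}^{eAe}]$ your argument (apply the exact functor $e(-)$ to a composition series, using $eL_\mathtt{j}=L_\mathtt{j}^{eAe}$ or $0$) is simpler and more elementary than the paper's $\Hom(-,I_\mathtt{j})$ comparison. For (3) and (4), both proofs hinge on $eP_\mathtt{i}\cong P_\mathtt{i}^{eAe}$ and $eI_\mathtt{i}\cong I_\mathtt{i}^{eAe}$ for $\mathtt{i}\in Q_0'$; your choice of a primitive idempotent $e_\mathtt{i}\in eAe$ with $Ae_\mathtt{i}\cong P_\mathtt{i}$ is a clean way to justify this, while the paper simply records the identifications as facts of the recollement. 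One small thing you leave implicit is that for $\mathtt{j}\neq\mathtt{k}$ in $Q_0'$ the simples $L_\mathtt{j}^{eAe}$ and $L_\mathtt{k}^{eAe}$ are non-isomorphic (needed to avoid overcounting in the composition-series argument), but this is part of the standard correspondence between simple $A$-modules not killed by $e$ and simple $eAe$-modules and is no real gap.
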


\begin{proof}
    Let $X$ be an arbitrary $A$-module. There is an epimorphism $X\twoheadrightarrow i((A/Ae A)\otimes_A X)$ (see e.g. \cite[Proposition 2.6]{Psa14}). Hence, there is an epimorphism from $\Hom_A(P_\mathtt{j},X)$ to 
    \[\Hom_A(P_\mathtt{j},i((A/Ae A)\otimes_A X))\cong\Hom_{A/AeA}((A/Ae A)\otimes_A P_\mathtt{j},(A/Ae A)\otimes_A X).\]
    Recall that $P_\mathtt{j}^{A/AeA}=(A/Ae A)\otimes_A P_\mathtt{j}$ for $\mathtt{j}\in Q_0\setminus Q_0'$, and these form a complete set of projective indecomposable $A/AeA$-modules. Moreover, $L_\mathtt{j}=i( L_\mathtt{j}^{A/AeA})$ for $\mathtt{j}\in Q_0\setminus Q_0'$ so \[\Hom_A(P_\mathtt{j}, L_\mathtt{j})= \Hom_A(P_\mathtt{j}, i(L_\mathtt{j}^{A/AeA}))\cong \Hom_{A/AeA}(P_\mathtt{j}^{A/AeA},L_\mathtt{j}^{A/AeA})\] and $(A/Ae A)\otimes_A L_\mathtt{j}= L_\mathtt{j}^{A/AeA}$. Hence, the multiplicity of $L_\mathtt{j}^{A/AeA}$ in $(A/Ae A)\otimes_A X$ is less than or equal to the multiplicity of $L_\mathtt{j}$ in $X$ for every $\mathtt{j}\in Q_0\setminus Q_0'$ and $(A/Ae A)\otimes_A X\neq 0$ if there exists an epimorphism from $X$ to some $L_\mathtt{i}$ with $\mathtt{i}\in Q_0\setminus Q_0'$. Fix now $\mathtt{i}\in Q_0\setminus Q_0'$ and set $X=\Delta_\mathtt{i}$. The functor $(A/AeA)\otimes_A -$ maps the projective presentation
    \[
    \begin{tikzcd}
        \bigoplus_{\substack{\mathtt{j} \in Q_0 \\ \mathtt{j} \not\unlhd \mathtt{i}}} P_\mathtt{j}^{\oplus a_\mathtt{j}} \ar[r] & P_\mathtt{i} \ar[r, "\pi_\mathtt{i}"] & \Delta_\mathtt{i} \ar[r] & 0
    \end{tikzcd}
    \]
    to the projective presentation
    \[
    \begin{tikzcd}
        \bigoplus_{\substack{\mathtt{j} \in Q_0\setminus Q_0' \\ \mathtt{j} \not\unlhd \mathtt{i}}} (P_\mathtt{j}^{A/AeA})^{\oplus a_\mathtt{j}} \ar[r] &  P_\mathtt{i}^{A/AeA} \ar[r] & (A/AeA)\otimes_A \Delta_\mathtt{i} \ar[r] & 0
    \end{tikzcd}.
    \]
    By the previous considerations, all composition factors of $(A/AeA)\otimes_A \Delta_\mathtt{i}$ are of the form $L_\mathtt{j}^{A/AeA}$ with $\mathtt{j} \in Q_0\setminus Q_0'$ and $\mathtt{j}\unlhd \mathtt{i}$. So $(A/AeA)\otimes_A \Delta_\mathtt{i}$ must be a quotient of $\Delta_\mathtt{i}^{A/AeA}$. However, the kernel of $(A/AeA)\otimes_A \pi_\mathtt{i}$ is generated by projectives labelled by $\mathtt{j} \in Q_0\setminus Q_0'$ with $\mathtt{j}\ntrianglelefteq{\mathtt{i}}$, so the module $(A/AeA)\otimes_A \Delta_\mathtt{i}$ cannot be a proper quotient of $\Delta_\mathtt{i}^{A/AeA}$. Thus $\Delta_\mathtt{i}^{A/AeA}=(A/AeA)\otimes_A \Delta_\mathtt{i}$ for $\mathtt{i} \in Q_0\setminus Q_0'$. To show that $\bar{\Delta}_\mathtt{i}^{A/AeA}=(A/Ae A)\otimes_A\bar{\Delta}_\mathtt{i}$, one can use a similar argument, now noting that the leftmost term of the projective presentation of $\bar{\Delta}_\mathtt{i}$ is a direct sum of projectives of the form $P_\mathtt{j}$ with $\mathtt{j} \ntriangleleft \mathtt{i}$ and that  $(A/Ae A)\otimes_A \bar{\Delta}_{\mathtt{i}}\neq 0$. To show that $\nabla_\mathtt{i}^{A/AeA}=\Hom_A(A/AeA,\nabla_\mathtt{i})$ and $\bar{\nabla}_\mathtt{i}^{A/AeA}=\Hom_A(A/AeA,\bar{\nabla}_\mathtt{i})$ one can adopt analogous (dual) strategies using the localisation part of the recollement (i.e.~the bottom half). This shows (1) and (2).
    
   To prove the remaining assertions, fix some $A$-module $X$. Recall that $I_\mathtt{j}= \Hom_{eAe}(eA,I_\mathtt{j}^{eAe})$ for every $\mathtt{j}\in Q_0'$ and note that
    \[
        \Hom_{eAe}(eX, I_\mathtt{j}^{eAe})\cong \Hom_{A}(X, \Hom_{eAe}(eA,I_\mathtt{j}^{eAe})) = \Hom_{A}(X, I_\mathtt{j}).
    \]
    Furthermore, $L_\mathtt{j}^{eAe}=e L_\mathtt{j}$ for $\mathtt{j}\in Q_0'$. So $\Hom_{eAe}(L_\mathtt{j}^{eAe}, I_\mathtt{j}^{eAe})\cong\Hom_{A}(L_\mathtt{j}, I_\mathtt{j})$, $\mathtt{j}\in Q_0'$. It follows that the multiplicity of $L_\mathtt{j}^{eAe}$ as a composition factor of $eX$ coincides with the multiplicity of $L_\mathtt{j}$ as a composition factor of $X$ for every $\mathtt{j}\in Q_0'$. Recall that if $X$ is a submodule of $I_\mathtt{i}$ for $\mathtt{i}\in Q_0'$, then $e X$ is a submodule of $e I_\mathtt{i}=I_\mathtt{i}^{eAe}$. Similarly, if $X$ is a quotient of $P_\mathtt{i}$ for $\mathtt{i}\in Q_0'$, then $eX$ is a quotient of $e P_\mathtt{i}=P_\mathtt{i}^{eAe}$.
\end{proof}

Suppose now that $(A,Q_0, \unlhd)$ is left standardly stratified and let $e\in A$ be an idempotent supported in some subset $Q_0'$ of $Q_0$. The remainder of this subsection deals with the following question: 
\begin{displayquote}
What does it mean for the idempotent $e$ to be compatible with the left standardly stratified structure of $(A,Q_0, \unlhd)$?
\end{displayquote}
 To answer this, recall that a \emphbf{coideal} $Q_0'$ of a partial order $(Q_0,\unlhd)$ is a subset of $Q_0$ satisfying the following property: if $\mathtt{i}\unlhd \mathtt{j}$ and $\mathtt{i}\in Q_0'$, then $\mathtt{j}\in Q_0'$. Below, there are some candidates for the notion of compatible idempotent. 
\begin{enumerate}
    \item The support $Q_0'$ of $e\in A$ is a coideal of $(Q_0,\unlhd)$. \label{compatible-coideal}
    \item There exists a left standard stratification of $(A,Q_0, \unlhd)$ where the ideal $AeA$ appears, i.e.~there exists a chain of idempotent ideals of $A$
    \[
    0=J_0 \subset \cdots \subset J_n =A
    \]
    with $AeA=J_l$ for some $l \in \{0,1, \ldots,n\}$ and such that $J_k/J_{k-1}$ is a direct sum of standard modules for every $k \in \{1,\ldots,n\}$.  
    \label{compatible-stratifying}
    \item The support $Q_0'$ of $e\in A$ is a coideal of $(Q_0,\unlhd_{\ess})$, where $(Q_0,\unlhd_{\ess})$ is the essential order associated to $(A, Q_0, \unlhd)$.\label{compatible-essential-coideal}
    \item The $A$-module $A/AeA$ has a $\Delta$-filtration.\label{compatible-A/AeA}
    \item The $A$-module $D(A/AeA)$ has a $\bar{\nabla}$-filtration.\label{compatible-D(A/AeA)}
    \item Both $(A/AeA, Q_0 \setminus Q_0', \unlhd)$ and $(eAe, Q_0', \unlhd)$ are left standardly stratified algebras.\label{compatible-quotients-subalgebras}
\end{enumerate}
The next two remarks discuss and clarify the notion of a left standard stratification mentioned in \eqref{compatible-stratifying}.
\begin{rmk}
\label{rmk:standardstrat1}
    Let $A$ be an algebra and $Q_0$ be a set labelling its simples. According to \cite[(2.1.4), Remark 2.1.2 (b)]{CPS96}, a left standard stratification of $A$ is, by definition, a chain of idempotent ideals of $A$
    \[
         0=J_0 \subset \cdots \subset J_n =A
    \]
    such that $J_k/J_{k-1}$ is a projective $A/J_{k-1}$-module for every $k \in \{1,\ldots,n\}$. By  \cite[Theorem 2.2.3]{CPS96}, $A$ has a left standard stratification of length $|Q_0|$ exactly when $A$ is left standardly stratified with respect to some poset structure on $Q_0$. In this case, the subquotients $J_k/J_{k-1}$ of the left standard stratification are direct sums of standard modules (see \cite[Remark 2.2.4]{CPS96}).
   \end{rmk}
   \begin{rmk}\label{rmk:standardstrat2}
   Let $(A, Q_0, \unlhd)$ be a left standardly stratified algebra. Any chain of idempotent ideals  \[
         0=J_0 \subset \cdots \subset J_n =A
    \]
    such that $J_k/J_{k-1}$ is a direct sum of standard modules for every $k \in \{1,\ldots,n\}$ is a left standard stratification. To see this, write $J_1=AeA$, where $e$ is an idempotent supported in a subset $Q_0'$ of $Q_0$. Then $AeA$ is isomorphic to $(\bigoplus_{\mathtt{i}\in Q_0'} P_{\mathtt{i}}^{\oplus[\Kopf(A):L_{\mathtt{i}}]})\oplus X$ as an $A$-module, where $X$ has no projective direct summands and $\Kopf(X)$ is a direct sum of simples with labels in $Q_0'$. Since $J_1$ is a direct sum of standard modules, we must actually have $J_1/J_0\cong AeA\cong \bigoplus_{\mathtt{i}\in Q_0'}P_{\mathtt{i}}^{\oplus[\Kopf(A):L_{\mathtt{i}}]}$ comparing tops and using the Krull--Remak--Schmidt Theorem. The claim follows now by induction on the length of the chain. The left standard stratifications of $A$ giving rise to the same left standardly stratified structure as that of $(A, Q_0,\unlhd)$ are the chains of idempotent ideals of $A$ whose subquotients are direct sums of standard modules over $(A, Q_0,\unlhd)$. 
\end{rmk}

The following theorem relates the candidates for the notion of compatible idempotent listed above. In the case of quasi-hereditary algebras, a recursive approach via idempotents has been considered in \cite[Theorem 1]{DR89} and in \cite[Theorem 2.1]{GKP19}.

\begin{thm}
\label{thm:defofcompatibility}
Let $(A,Q_0, \unlhd)$ be a left standardly stratified algebra and let $e\in A$ be an idempotent supported in $Q_0'$. We have the following chain of implications:
    \[
    \begin{tikzcd}
        & & & & \eqref{compatible-A/AeA} \ar[rd, Rightarrow] & \\
        \eqref{compatible-coideal} \ar[r, Rightarrow] & \eqref{compatible-stratifying} \ar[r, Leftrightarrow] & \eqref{compatible-essential-coideal} \ar[r, Leftrightarrow] & \eqref{compatible-A/AeA} \wedge \eqref{compatible-D(A/AeA)}
            \ar[ru,Rightarrow] \ar[rd, Rightarrow] & & \eqref{compatible-quotients-subalgebras} \\
       & & & & \eqref{compatible-D(A/AeA)} \arrow[ru, Rightarrow] &
    \end{tikzcd}.
    \]
\end{thm}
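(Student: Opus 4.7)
The plan is to traverse the diagram exploiting two main ingredients: first, the essential order $\unlhd_{\ess}$ is the minimal partial order generated by the composition-factor relations $[\Delta_\mathtt{i}:L_\mathtt{j}]\neq 0$ and $[\bar{\nabla}_\mathtt{i}:L_\mathtt{j}]\neq 0$, so that being a $\unlhd_{\ess}$-coideal amounts to closure under these relations; and second, Lemma \ref{lem:comp-first} translates standard and proper costandard modules between $A$, $A/AeA$ and $eAe$. The implication \eqref{compatible-coideal} $\Rightarrow$ \eqref{compatible-essential-coideal} is then immediate, because $\unlhd_{\ess}\subseteq\unlhd$ as relations. For \eqref{compatible-essential-coideal} $\Leftrightarrow$ \eqref{compatible-stratifying}, given \eqref{compatible-essential-coideal} I refine $\unlhd_{\ess}$ to a total order on $Q_0$ in which $Q_0'$ is a coideal (possible exactly because $Q_0'$ is a $\unlhd_{\ess}$-coideal) and invoke Remarks \ref{rmk:standardstrat1} and \ref{rmk:standardstrat2} to build a standard stratification with $AeA=J_{|Q_0'|}$; conversely, any standard stratification containing $AeA$ refines to length $|Q_0|$, yielding a total order that refines $\unlhd_{\ess}$ (since composition factors of $\Delta_\mathtt{k}$ and $\bar{\nabla}_\mathtt{k}$ must lie below $\mathtt{k}$ in this order) and in which $Q_0'$ is a coideal.

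For \eqref{compatible-essential-coideal} $\Rightarrow$ \eqref{compatible-A/AeA} $\wedge$ \eqref{compatible-D(A/AeA)}, the tail $J_l\subset J_{l+1}\subset\cdots\subset J_n=A$ of the standard stratification, quotiented by $J_l$, yields a $\Delta^A$-filtration of $A/AeA$ (proving \eqref{compatible-A/AeA}) and simultaneously exhibits $A/AeA$ as a left standardly stratified algebra; the dual characterisation of such algebras then produces a $\bar{\nabla}^{A/AeA}$-filtration of $D(A/AeA)$. The coideal property forces every $\bar{\nabla}^A_\mathtt{j}$ with $\mathtt{j}\in Q_0\setminus Q_0'$ to have composition factors only in $Q_0\setminus Q_0'$, so it is annihilated by $AeA$, and by Lemma \ref{lem:comp-first} this gives $\bar{\nabla}^A_\mathtt{j}=\bar{\nabla}^{A/AeA}_\mathtt{j}$, upgrading the filtration to a $\bar{\nabla}^A$-filtration (proving \eqref{compatible-D(A/AeA)}). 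For the converse \eqref{compatible-A/AeA} $\wedge$ \eqref{compatible-D(A/AeA)} $\Rightarrow$ \eqref{compatible-essential-coideal}, I would match multiplicities: the $\Delta^A$-filtration of $A/AeA$ contains $\Delta^A_\mathtt{k}$ for each $\mathtt{k}\in Q_0\setminus Q_0'$ (with multiplicity $[\Kopf A:L_\mathtt{k}]$), and since $A/AeA$ has composition factors only in $Q_0\setminus Q_0'$, so does each such $\Delta^A_\mathtt{k}$; \eqref{compatible-D(A/AeA)} yields the dual statement for the $\bar{\nabla}^A_\mathtt{k}$. Therefore $Q_0\setminus Q_0'$ is closed under the composition-factor generators of $\unlhd_{\ess}$, hence is an ideal of $\unlhd_{\ess}$.

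Finally, for \eqref{compatible-A/AeA} $\Rightarrow$ \eqref{compatible-quotients-subalgebras} and \eqref{compatible-D(A/AeA)} $\Rightarrow$ \eqref{compatible-quotients-subalgebras}: starting from \eqref{compatible-A/AeA}, the composition-factor analysis above shows $\Delta^A_\mathtt{k}$ is annihilated by $AeA$ for $\mathtt{k}\in Q_0\setminus Q_0'$, whence Lemma \ref{lem:comp-first} yields $\Delta^A_\mathtt{k}=\Delta^{A/AeA}_\mathtt{k}$; the $\Delta^A$-filtration of $A/AeA$ thus becomes a $\Delta^{A/AeA}$-filtration, and decomposing into projective summands gives a $\Delta^{A/AeA}$-filtration of each $P^{A/AeA}_\mathtt{i}$, making $A/AeA$ left standardly stratified. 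For $eAe$, apply the exact functor $e(-)$ to the $\Delta^A$-filtration of each $P^A_\mathtt{i}$ with $\mathtt{i}\in Q_0'$; the pieces $e\Delta^A_\mathtt{k}$ with $\mathtt{k}\in Q_0\setminus Q_0'$ vanish (since $\Delta^A_\mathtt{k}$ has composition factors in $Q_0\setminus Q_0'$), and the remaining pieces $e\Delta^A_\mathtt{k}$ with $\mathtt{k}\in Q_0'$ should be identified with $\Delta^{eAe}_\mathtt{k}$, giving a $\Delta^{eAe}$-filtration of $P^{eAe}_\mathtt{i}=eP^A_\mathtt{i}$. The implication \eqref{compatible-D(A/AeA)} $\Rightarrow$ \eqref{compatible-quotients-subalgebras} follows from the dual argument, using injectives and $\bar{\nabla}^A$-filtrations. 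The identification $e\Delta^A_\mathtt{k}\cong\Delta^{eAe}_\mathtt{k}$ for $\mathtt{k}\in Q_0'$ (and dually $e\bar{\nabla}^A_\mathtt{k}\cong\bar{\nabla}^{eAe}_\mathtt{k}$) is the principal technical obstacle; I expect it to require showing that the $e$-truncation of the projective presentation of $\Delta^A_\mathtt{k}$ remains a projective presentation of $\Delta^{eAe}_\mathtt{k}$, where the coideal information implicit in \eqref{compatible-A/AeA} (or, dually, \eqref{compatible-D(A/AeA)}) is used to control which terms survive.
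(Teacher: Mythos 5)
Your architecture closely matches the paper's: you traverse the diagram by establishing essentially the same lemma-level facts, and the easy implication $\eqref{compatible-coideal}\Rightarrow\eqref{compatible-essential-coideal}$, the passage through standard stratifications for $\eqref{compatible-stratifying}\Leftrightarrow\eqref{compatible-essential-coideal}$, the composition-factor analysis for $\eqref{compatible-A/AeA}\wedge\eqref{compatible-D(A/AeA)}\Rightarrow\eqref{compatible-essential-coideal}$, and the $A/AeA$ half of $\eqref{compatible-A/AeA}\Rightarrow\eqref{compatible-quotients-subalgebras}$ reproduce the content of Lemmas \ref{lem:comp-second}, \ref{lem:comp-third}, \ref{lem:comp-third'}, \ref{lem:comp-fifth}.

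There is however a genuine gap, the one you flag yourself: the identification $e\Delta_\mathtt{k}\cong\Delta^{eAe}_\mathtt{k}$ for $\mathtt{k}\in Q_0'$ (and dually $e\bar{\nabla}_\mathtt{k}\cong\bar{\nabla}^{eAe}_\mathtt{k}$). This is not a loose end to defer; it is the whole content of the $eAe$ side of $\eqref{compatible-A/AeA}\Rightarrow\eqref{compatible-quotients-subalgebras}$ (resp.\ $\eqref{compatible-D(A/AeA)}\Rightarrow\eqref{compatible-quotients-subalgebras}$), and the paper devotes Lemma \ref{lem:comp-fourth} (resp.\ \ref{lem:comp-fourth'}) to it. The paper's argument: Lemma \ref{lem:comp-first} already shows that $e\Delta_\mathtt{i}$ is a quotient of $\Delta_\mathtt{i}^{eAe}$ (same top, composition factors labelled $\unlhd\mathtt{i}$). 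Applying $e(-)$ to $0\to U_\mathtt{i}\to P_\mathtt{i}\to\Delta_\mathtt{i}\to 0$, where $U_\mathtt{i}$ is $\Delta$-filtered by $\Delta_\mathtt{j}$ with $\mathtt{j}\rhd\mathtt{i}$, yields $0\to eU_\mathtt{i}\to P_\mathtt{i}^{eAe}\to e\Delta_\mathtt{i}\to 0$ with the top of $eU_\mathtt{i}$ consisting only of simples labelled strictly above $\mathtt{i}$, which rules out $e\Delta_\mathtt{i}$ being a proper quotient of $\Delta_\mathtt{i}^{eAe}$; and the same exact sequence immediately supplies the $\Delta^{eAe}$-filtration of $P_\mathtt{i}^{eAe}$, so the identification and the stratification statement come out in one stroke (your proposed route via $e$-truncating a projective presentation could probably also be made to work, but you would still need the contrapositive of Lemma \ref{lem:comp-third} to know which projective summands survive, and you do not carry it through). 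A second, smaller gap sits in your $\eqref{compatible-stratifying}\Rightarrow\eqref{compatible-essential-coideal}$: you assert without proof that the total order read off a standard stratification containing $AeA$ refines $\unlhd_{\ess}$, which is clear for the $\Delta$-generators of the essential order but is not a formality for the $\bar{\nabla}$-generators; the paper handles the latter via BGG reciprocity combined with \cite[Lemma 2.1.5]{CPS96}, locating $\Delta_\mathtt{i}$ inside a $\Delta$-filtration of $AeA=J_l$.
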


We will prove this in several steps.

\begin{lem}
\label{lem:comp-second}
    Let $(A,Q_0, \unlhd)$ be a left standardly stratified algebra and let $e\in A$ be an idempotent supported in $Q_0'$. If the $A$-module $A/AeA$ has a $\Delta$-filtration, then $(A/AeA, Q_0\setminus Q_0', \unlhd)$ is left standardly stratified, $i (\Delta_\mathtt{i}^{A/AeA})=\Delta_\mathtt{i}$ for every $\mathtt{i}\in Q_0\setminus Q_0'$ and $i$ restricts to an equivalence of exact categories $\mathcal{F}(\Delta^{A/AeA}) \to \mathcal{F}(\{\Delta_{\mathtt{i}}\mid\mathtt{i}\in Q_0\setminus Q_0'\})$.
\end{lem}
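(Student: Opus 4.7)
The plan is to first show that $i(\Delta_\mathtt{i}^{A/AeA}) = \Delta_\mathtt{i}$ for all $\mathtt{i} \in Q_0 \setminus Q_0'$, then to deduce the left standardly stratified structure on $A/AeA$ by reordering $\Delta^A$-filtrations, and finally to extract the equivalence of exact categories. The recurring observation is that $e(A/AeA) = 0$, so every subquotient of $A/AeA$ has composition factors labelled in $Q_0 \setminus Q_0'$ and is of the form $i(X)$ for some $A/AeA$-module $X$.

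For the first two claims, fix $\mathtt{i} \in Q_0 \setminus Q_0'$ and consider the summand $P_\mathtt{i}^{A/AeA}$ of $A/AeA$. Since $\mathcal{F}(\Delta^A)$ is closed under direct summands for left standardly stratified algebras, $P_\mathtt{i}^{A/AeA}$ carries a $\Delta^A$-filtration. Viewed as an $A$-module, $P_\mathtt{i}^{A/AeA}$ still has simple top $L_\mathtt{i}$, so the top factor of any $\Delta^A$-filtration must be $\Delta_\mathtt{i}$ (the top $L_\mathtt{j}$ of the top factor must surject onto $L_\mathtt{i}$). In particular, $\Delta_\mathtt{i}$ is a quotient of $P_\mathtt{i}^{A/AeA}$, so its composition factors are labelled in $Q_0 \setminus Q_0'$, meaning $\Delta_\mathtt{i}$ lies in $\image(i)$; combined with Lemma \ref{lem:comp-first}(1) this yields $i(\Delta_\mathtt{i}^{A/AeA}) = \Delta_\mathtt{i}$. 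Using the standard vanishing $\Ext^1(\Delta_\mathtt{k}, \Delta_\mathtt{l}) = 0$ for $\mathtt{l} \not\rhd \mathtt{k}$---which holds for left standardly stratified algebras---I can reorder the $\Delta^A$-filtration of $P_\mathtt{i}^{A/AeA}$ to have $\Delta_\mathtt{i}$ on top and all remaining factors $\Delta_\mathtt{j}$ with $\mathtt{j} \rhd \mathtt{i}$ (and $\mathtt{j} \in Q_0 \setminus Q_0'$). The kernel $\Omega$ of $P_\mathtt{i}^{A/AeA} \twoheadrightarrow \Delta_\mathtt{i}$ sits inside $i(P_\mathtt{i}^{A/AeA})$, so $\Omega \in \image(i)$. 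Applying the functor $q = (A/AeA) \otimes_A -$, which is the exact inverse of $i$ on its essential image, transports the filtration of $\Omega$ to a $\Delta^{A/AeA}$-filtration of $\ker(P_\mathtt{i}^{A/AeA} \twoheadrightarrow \Delta_\mathtt{i}^{A/AeA})$ by $\Delta_\mathtt{j}^{A/AeA}$'s with $\mathtt{j} \rhd \mathtt{i}$, showing that $(A/AeA, Q_0 \setminus Q_0', \unlhd)$ is left standardly stratified.

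For the equivalence of exact categories, the functor $i$ is fully faithful, exact, and sends $\Delta_\mathtt{i}^{A/AeA}$ to $\Delta_\mathtt{i}$ for $\mathtt{i} \in Q_0 \setminus Q_0'$, so it restricts to an exact fully faithful functor $\mathcal{F}(\Delta^{A/AeA}) \to \mathcal{F}(\{\Delta_\mathtt{i} \mid \mathtt{i} \in Q_0 \setminus Q_0'\})$. Essential surjectivity holds because any $M$ in the target category is annihilated by $e$ (each filtration factor is), hence lies in $\image(i)$, and applying $q$ to its filtration produces a $\Delta^{A/AeA}$-filtration of $q(M)$ with $i(q(M)) = M$. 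The main obstacle is the $\Ext^1$-vanishing underpinning the reordering step; I would invoke this from the literature on left standardly stratified algebras (e.g., \cite{AHLU00}).
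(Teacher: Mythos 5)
Your overall structure tracks the paper's: you use that $\mathcal{F}(\Delta^A)$ is closed under direct summands to get a $\Delta^A$-filtration of $P_\mathtt{i}^{A/AeA}=P_\mathtt{i}/AeP_\mathtt{i}$, you use the simple-top argument to pin down $\Delta_\mathtt{i}$ as the top factor, and the identification $i(\Delta_\mathtt{i}^{A/AeA})=\Delta_\mathtt{i}$ and the exact-category equivalence are argued the same way. However, there is a genuine gap in the middle step. Reordering a $\Delta^A$-filtration via $\Ext^1$-vanishing only permutes the order in which the filtration factors occur; it cannot change \emph{which} $\Delta_\mathtt{j}$'s appear. Once you have placed $\Delta_\mathtt{i}$ on top (which you already had, so no reordering is needed there), the remaining factors of the kernel $\Omega$ of $P_\mathtt{i}^{A/AeA}\twoheadrightarrow\Delta_\mathtt{i}$ are whatever they were before the shuffle; a priori one of them could be a $\Delta_\mathtt{j}$ with $\mathtt{j}$ incomparable to $\mathtt{i}$, and the reordering argument does not rule this out. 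So the sentence ``I can reorder \dots\ to have $\Delta_\mathtt{i}$ on top and all remaining factors $\Delta_\mathtt{j}$ with $\mathtt{j}\rhd\mathtt{i}$'' asserts more than the $\Ext^1$-vanishing delivers.

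What is actually needed is a comparison with the known filtration multiplicities of $P_\mathtt{i}$, namely $(P_\mathtt{i}:\Delta_\mathtt{i})=1$ and $(P_\mathtt{i}:\Delta_\mathtt{j})=0$ unless $\mathtt{j}\unrhd\mathtt{i}$. The paper obtains this by also using that $\mathcal{F}(\Delta^A)$ is closed under kernels of epimorphisms, so $AeP_\mathtt{i}=\ker\bigl(P_\mathtt{i}\twoheadrightarrow P_\mathtt{i}/AeP_\mathtt{i}\bigr)$ is $\Delta^A$-filtered, and then splices the filtrations of $AeP_\mathtt{i}$ and $P_\mathtt{i}/AeP_\mathtt{i}$ to get one of $P_\mathtt{i}$; since all $\Delta$-filtrations of $P_\mathtt{i}$ have the same factors, the factors occurring in $\Omega$ must be among those of $\ker(P_\mathtt{i}\twoheadrightarrow\Delta_\mathtt{i})$, hence of the form $\Delta_\mathtt{j}$ with $\mathtt{j}\rhd\mathtt{i}$. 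Equivalently, you could invoke additivity of $(-:\Delta_\mathtt{j})$ along the short exact sequence $0\to AeP_\mathtt{i}\to P_\mathtt{i}\to P_\mathtt{i}^{A/AeA}\to 0$ to conclude $(P_\mathtt{i}^{A/AeA}:\Delta_\mathtt{j})\leq(P_\mathtt{i}:\Delta_\mathtt{j})$, which immediately gives the constraint on labels. Either way, the closure of $\mathcal{F}(\Delta^A)$ under kernels of epimorphisms is essential, and it is this step, rather than the $\Ext^1$-reordering, that makes the conclusion go through.
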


\begin{proof}
    The standard modules appearing in a $\Delta$-filtration of $A/AeA$ are such that all their composition factors are of the form $L_\mathtt{i}$ with $\mathtt{i}\in Q_0\setminus Q_0'$. As $\mathcal{F}(\Delta)$ is closed under direct summands (see \cite[Lemma 1.4]{DR92}) and $P_\mathtt{i}/AeP_\mathtt{i}$ is a direct summand of $A/AeA$, $P_\mathtt{i}/AeP_\mathtt{i}$ is $\Delta$-filtered and $\Delta_\mathtt{i}$ must coincide with $\Delta_\mathtt{i}^{A/AeA}$ for $\mathtt{i}\in Q_0\setminus Q_0'$. Moreover, as $\mathcal{F}(\Delta)$ is closed under kernels of epimorphisms (see \cite[Lemma 1.5]{DR92}), $AeP_\mathtt{i}$ is also $\Delta$-filtered as the kernel of $P_{\mathtt{i}} \twoheadrightarrow P_{\mathtt{i}}/AeP_\mathtt{i}$.
    A $\Delta$-filtration of $P_\mathtt{i}$ can be obtained by splicing the $\Delta$-filtration of $AeP_\mathtt{i}$ together with the one of $P_\mathtt{i}/AeP_\mathtt{i}$. In particular, the standard modules appearing in the kernel of the morphism $P_\mathtt{i}/AeP_\mathtt{i} \twoheadrightarrow \Delta^{A/AeA}_\mathtt{i}$ must be all of the form $\Delta^{A/AeA}_\mathtt{j}$ with $\mathtt{j}\rhd \mathtt{i}$. By regarding $\Modu{A/AeA}$ as a Serre subcategory of $\Modu{A}$ it is clear that $i$ restricts to the desired equivalence of exact categories.
\end{proof}

\begin{rmk}
\label{rmk:second}
    Observe that the proof of Lemma \ref{lem:comp-second} can be adapted to prove the following statement for a right standardly stratified algebra $(A,Q_0,\unlhd)$: if $A/AeA$ has a $\bar{\Delta}$-filtration, then $(A/AeA, Q_0\setminus Q_0', \unlhd)$ is right standardly stratified with $i (\bar{\Delta}_\mathtt{i}^{A/AeA})=\bar{\Delta}_\mathtt{i}$ for every $\mathtt{i}\in Q_0\setminus Q_0'$. For this, note that the category $\mathcal{F}(\bar{\Delta})$ is also closed under direct summands and kernels of epimorphisms (\cite[Theorem~3.1, Lemma~3.2]{ADL98}).
\end{rmk}

\begin{lem}
\label{lem:comp-second'}
    Let $(A,Q_0, \unlhd)$ be a left standardly stratified algebra and let $e\in A$ be an idempotent supported in $Q_0'$. If the $A$-module $D(A/AeA)$ has a $\bar{\nabla}$-filtration, then $(A/AeA, Q_0\setminus Q_0', \unlhd)$ is left standardly stratified, $i (\bar{\nabla}_\mathtt{i}^{A/AeA})=\bar{\nabla}_\mathtt{i}$ for every $\mathtt{i}\in Q_0\setminus Q_0'$ and $i$ restricts to an equivalence of exact categories $\mathcal{F}(\bar{\nabla}^{A/AeA}) \to \mathcal{F}(\{\bar{\nabla}_{\mathtt{i}}\mid\mathtt{i}\in Q_0\setminus Q_0'\})$.
\end{lem}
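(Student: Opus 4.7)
The plan is to dualise the argument of Lemma \ref{lem:comp-second}, interchanging projectives with injectives, standard modules with proper costandard modules, and the colocalisation part of the recollement with the localisation part. The analogue of the direct summand $P_\mathtt{i}/AeP_\mathtt{i}$ of $A/AeA$ will be the injective hull $I_\mathtt{i}^{A/AeA}$ seen, via $i$, as a direct summand of $D(A/AeA)$. First I would observe that $D(A/AeA)$ is genuinely a (left) $A/AeA$-module, since $AeA$ acts trivially on $A/AeA$ on the right. Moreover, $D(A/AeA)$ is the $\Bbbk$-dual of the right regular $A/AeA$-module, so it is an injective cogenerator of $\Modu{A/AeA}$ and therefore contains each indecomposable injective $I_\mathtt{i}^{A/AeA}$, for $\mathtt{i}\in Q_0\setminus Q_0'$, as a direct summand. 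Because $i$ is exact and fully faithful, $i(I_\mathtt{i}^{A/AeA})$ is a direct summand of $D(A/AeA)$ in $\Modu{A}$.

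Next I would invoke that $\mathcal{F}(\bar{\nabla})$ is closed under direct summands and cokernels of monomorphisms. These closure properties are the duals of \cite[Lemmas~1.4,~1.5]{DR92}; they follow from the right standardly stratified versions \cite[Theorem~3.1, Lemma~3.2]{ADL98} applied to $A\Op$ for the category $\mathcal{F}(\bar{\Delta}^{A\Op})$ by passing through the $\Bbbk$-duality $D$ and using the identification $D(\bar{\Delta}_\mathtt{i}^{A\Op})=\bar{\nabla}_\mathtt{i}^{A}$, cf.~Remark~\ref{rmk:second}. Closure under direct summands then yields a $\bar{\nabla}$-filtration of $i(I_\mathtt{i}^{A/AeA})$. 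Its socle equals $L_\mathtt{i}$ because $i$ takes simples and injective hulls inside the Serre subcategory $\Modu{A/AeA}$ to their analogues in $\Modu{A}$, so the bottom term of any $\bar{\nabla}$-filtration of $i(I_\mathtt{i}^{A/AeA})$ must equal $\bar{\nabla}_\mathtt{i}$. In particular $\bar{\nabla}_\mathtt{i}$ is annihilated by $AeA$, and Lemma~\ref{lem:comp-first}(2) gives $i(\bar{\nabla}_\mathtt{i}^{A/AeA})=\bar{\nabla}_\mathtt{i}$ for every $\mathtt{i}\in Q_0\setminus Q_0'$.

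For the ordering condition, the cokernel $I_\mathtt{i}/i(I_\mathtt{i}^{A/AeA})$ is also $\bar{\nabla}$-filtered by closure under cokernels of monomorphisms, so splicing produces a $\bar{\nabla}$-filtration of $I_\mathtt{i}$. Since $(A,Q_0,\unlhd)$ is left standardly stratified, $\bar{\nabla}_\mathtt{i}$ appears exactly once in the $\bar{\nabla}$-filtration of $I_\mathtt{i}$ (at the bottom) and every other subquotient is of the form $\bar{\nabla}_\mathtt{j}$ with $\mathtt{j}\rhd\mathtt{i}$. By uniqueness of multiplicities in $\bar{\nabla}$-filtrations, the same constraint must hold for the filtration of $i(I_\mathtt{i}^{A/AeA})/\bar{\nabla}_\mathtt{i}$; being subquotients of an $A/AeA$-module, each occurring $\bar{\nabla}_\mathtt{j}$ is annihilated by $AeA$ and therefore coincides with $i(\bar{\nabla}_\mathtt{j}^{A/AeA})$. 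This shows that $(A/AeA, Q_0\setminus Q_0',\unlhd)$ is left standardly stratified and that $i(\bar{\nabla}_\mathtt{j}^{A/AeA})=\bar{\nabla}_\mathtt{j}$ for every $\mathtt{j}\in Q_0\setminus Q_0'$. The equivalence of exact categories finally follows as in Lemma~\ref{lem:comp-second} by viewing $\Modu{A/AeA}$ as a Serre subcategory of $\Modu{A}$; essential surjectivity of $i$ onto $\mathcal{F}(\{\bar{\nabla}_\mathtt{j}\mid \mathtt{j}\in Q_0\setminus Q_0'\})$ is immediate since any object there has all composition factors labelled by $Q_0\setminus Q_0'$ and is therefore annihilated by $AeA$. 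The point requiring most care is establishing the dual closure properties of $\mathcal{F}(\bar{\nabla})$ in the left standardly stratified setting, as they are not cited directly in the paper and must be deduced from the right standardly stratified case via $A\Op$.
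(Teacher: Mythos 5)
Your proof is correct, and it takes a genuinely different (more explicit) route than the paper's. The paper's proof is a very short reduction: it observes that $D(A/AeA)$ being $\bar{\nabla}$-filtered over $A$ is the same as $A/AeA$ being $\bar{\Delta}$-filtered over $A\Op$, then invokes Remark~\ref{rmk:second} (the right-standardly-stratified adaptation of Lemma~\ref{lem:comp-second}) for $A\Op$, and finally dualises back. Your argument instead stays entirely inside $\Modu{A}$: you exhibit $i(I_{\mathtt i}^{A/AeA})$ as a direct summand of $D(A/AeA)$, use closure of $\mathcal F(\bar\nabla)$ under direct summands and cokernels of monomorphisms to put a $\bar\nabla$-filtration on it, read off the ordering condition by splicing with a filtration of $I_{\mathtt i}$ and comparing multiplicities, and identify the subquotients with $i(\bar\nabla_{\mathtt j}^{A/AeA})$ via Lemma~\ref{lem:comp-first}(2). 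What the paper's route buys is brevity and a clean illustration of the left--right duality principle; what yours buys is a self-contained argument parallel to Lemma~\ref{lem:comp-second} that makes the role of $i(I_{\mathtt i}^{A/AeA})\subseteq I_{\mathtt i}$ explicit and shows concretely how the costandard filtration of $I_{\mathtt i}$ refines. You correctly flag that the main thing needing care is the dual closure properties of $\mathcal F(\bar\nabla)$ for a left standardly stratified algebra, which you rightly derive from \cite[Theorem~3.1, Lemma~3.2]{ADL98} applied to $A\Op$ and the identification $D(\bar\Delta_{\mathtt i}^{A\Op})=\bar\nabla_{\mathtt i}^A$; once that is in place, the rest of your argument goes through. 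One minor phrasing caveat: ``$i$ takes injective hulls to their analogues'' should be understood as saying $i$ preserves socles (so $\soc\, i(I_{\mathtt i}^{A/AeA})=L_{\mathtt i}$), not that $i(I_{\mathtt i}^{A/AeA})$ is the $A$-injective hull of $L_{\mathtt i}$, which it is not.
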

\begin{proof}
Note that the $A$-module $D(A/AeA)$ has a $\bar{\nabla}$-filtration if and only if $A/AeA$, considered as an $A\Op$-module, has a filtration by proper standard modules $\bar{\Delta}^{A\Op}_\mathtt{i}$. Recall that $(A\Op, Q_0, \unlhd)$ is right standardly stratified since $(A, Q_0, \unlhd)$ is left standardly stratified. By Remark \ref{rmk:second}, $((A/A eA)\Op, Q_0\setminus Q_0', \unlhd)$ is right standardly stratified and $\bar{\Delta}_\mathtt{i}^{(A/AeA)\Op}$ coincides with $\bar{\Delta}^{A\Op}_\mathtt{i}$ for every $\mathtt{i}\in Q_0\setminus Q_0'$. The result follows by dualisation. 
\end{proof}

The next example shows that conditions \eqref{compatible-A/AeA} and \eqref{compatible-D(A/AeA)} do not imply each other.

\begin{ex}
\label{ex:first}
    Consider the algebra $A=\Bbbk Q$, where
    \[
    Q=
    \begin{tikzcd}[ column sep = small]
        \overset{\mathtt{1}}{\circ} \ar[r] & \overset{\mathtt{2}}{\circ} & \overset{\mathtt{1}'}{\circ} \ar[l]
    \end{tikzcd}.
    \]
    Endow $Q_0$ with the partial order given by $\mathtt{1} \unlhd \mathtt{2}$, $\mathtt{1}'\unlhd \mathtt{2}$. The algebra $(A,Q_0,\unlhd)$ is quasi-hereditary with simple standard modules and $(Q_0,\unlhd)$ is the associated essential order. Consider the idempotent $e_\mathtt{1}$. Any $A$-module is $\Delta$-filtered, so in particular $A/Ae_\mathtt{1}A$ lies in $\mathcal{F}(\Delta)$, but $\{\mathtt{1}\}$ is not a coideal of $(Q_0,\unlhd)$. The algebra $A/Ae_\mathtt{1}A$ is clearly quasi-hereditary with respect to $(\{\mathtt{1}',\mathtt{2}\},\unlhd)$ and has simple standard modules (cf.~Lemma \ref{lem:comp-second}). However, we have $D(A/Ae_\mathtt{1}A)\cong P_{\mathtt{1}'} \oplus L_{\mathtt{1}'}$ as $A$-modules, and it is easy to check that $D(A/Ae_\mathtt{1}A)$ is not filtered by (proper) costandard modules. It follows that \eqref{compatible-A/AeA} does not imply \eqref{compatible-D(A/AeA)}. By considering the quasi-hereditary algebra $(A\Op,Q_0,\unlhd)$ instead, one concludes that \eqref{compatible-D(A/AeA)}  does not imply \eqref{compatible-A/AeA}. Recall that the trace of $X$ in $Y$, denoted by $\Tra{X}{Y}$, is the smallest submodule of $Y$ generated by $X$. Note that
    \[
        \nabla_\mathtt{2}^{A/Ae_\mathtt{1} A}=
        \begin{tikzcd}[row sep = small]
            \mathtt{1}' \ar[d, no head] \\
            \mathtt{2}
        \end{tikzcd}
        =\bar{\nabla}_\mathtt{2}^{A/Ae_\mathtt{1} A}=\Tra{A/Ae_\mathtt{1} A}{\bar{\nabla}_\mathtt{2}} \subset \bar{\nabla}_\mathtt{2} =
        \begin{tikzcd}[column sep = tiny, row sep = small]
            \mathtt{1} \ar[rd, no head] & & \mathtt{1}'\\
            & \mathtt{2} \ar[ru, no head] &
        \end{tikzcd}
        =\nabla_\mathtt{2}
    \]
    (cf.~Lemmas \ref{lem:comp-first} and \ref{lem:comp-second'}). Notice that $e_\mathtt{1} A e_\mathtt{1}$ is clearly quasi-hereditary with $\Delta_\mathtt{1}^{e_\mathtt{1} A e_\mathtt{1}}=e_\mathtt{1} \Delta_\mathtt{1}$ and $\nabla_\mathtt{1}^{e_\mathtt{1} A e_\mathtt{1}}=e_\mathtt{1} \nabla_\mathtt{1}$.
\end{ex}

The next example shows that the requirement that $i (\Delta_\mathtt{i}^{A/AeA})=\Delta_\mathtt{i}$ for every $\mathtt{i}\in Q_0\setminus Q_0'$ does not imply that $A/AeA$ is $\Delta$-filtered, nor do the identities $i (\bar{\nabla}_\mathtt{i}^{A/AeA})=\bar{\nabla}_\mathtt{i}$ for every $\mathtt{i}\in Q_0\setminus Q_0'$ imply that $D(A/AeA)$ is $\bar{\nabla}$-filtered. The algebra in the example is borrowed from \cite[Example 2.3]{Koe95}, where it was used to demonstrate that not all quasi-hereditary algebras have an exact Borel subalgebra.
\begin{ex}
\label{ex:contraexample_steffen}
    Consider the algebra $A=\Bbbk Q/(\beta \alpha - \delta \gamma)$, where
    \[
    Q=
    \begin{tikzcd}[row sep = small, column sep = small]
        & \overset{\mathtt{1}}{\circ} \ar[rd, "\alpha"] \ar[ld, "\gamma"'] & \\
        \overset{\mathtt{2}}{\circ} \ar[rd, "\delta"'] & & \overset{\mathtt{4}}{\circ} \ar[ld, "\beta"] \\
        & \overset{\mathtt{3}}{\circ} &
    \end{tikzcd}.
    \]
    Note that $A$ is quasi-hereditary with respect to the natural order on the set $\{\mathtt{1},\mathtt{2},\mathtt{3},\mathtt{4}\}$. Consider the idempotent $e_\mathtt{2}$. It is not hard to check that $\Delta_\mathtt{i}=i (\Delta_\mathtt{i}^{A/Ae_\mathtt{2} A})$ for $\mathtt{i}\in \{\mathtt{1},\mathtt{3},\mathtt{4}\}$. However, $A/Ae_\mathtt{2} A$ is not $\Delta$-filtered.
\end{ex}

Given a left standardly stratified algebra $(A,Q_0, \unlhd)$ and a module $X$ in $\mathcal{F}(\Delta)$, denote the multiplicity of $\Delta_\mathtt{i}$ in $X$ by $(X:\Delta_\mathtt{i})$, and define $(Y:\bar{\nabla}_\mathtt{i})$ analogously for $Y$ in $\mathcal{F}(\bar{\nabla})$. Consider a similar notation for the multiplicities of proper standard modules and of costandard modules when the underlying algebra is right standardly stratified. For applications of the next two results it is useful to remember that BGG reciprocity holds for standardly stratified algebras (see \cite[Theorem 2.5]{ADL98}). In particular, given a left standardly stratified algebra $(A,Q_0, \unlhd)$ and $\mathtt{i},\mathtt{j}\in Q_0$, we have that $[\Delta_\mathtt{j}:L_\mathtt{i}]\neq 0$ if and only if $(I_\mathtt{i}:\bar{\nabla}_\mathtt{j})\neq 0$ and also $[\bar{\nabla}_\mathtt{j}:L_\mathtt{i}]\neq 0$ if and only if $(P_\mathtt{i}:\Delta_\mathtt{j})\neq 0$.

\begin{lem}
\label{lem:comp-third}
    Let $(A,Q_0, \unlhd)$ be a left standardly stratified algebra and let $e\in A$ be an idempotent supported in $Q_0'$. Suppose that the $A$-module $A/AeA$ has a $\Delta$-filtration and let $\mathtt{i},\mathtt{j} \in Q_0$. If $[\Delta_\mathtt{j}:L_\mathtt{i}]\neq 0$ and $\mathtt{i}\in Q_0'$, then $\mathtt{j}\in Q_0'$.
\end{lem}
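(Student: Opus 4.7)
My plan is to prove the contrapositive: assuming $\mathtt{j}\in Q_0\setminus Q_0'$, I will show that $[\Delta_\mathtt{j}:L_\mathtt{i}]=0$ for every $\mathtt{i}\in Q_0'$. This reformulation is natural because condition \eqref{compatible-A/AeA} controls precisely the composition factor behaviour of standard modules labelled outside the support of $e$.

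The first step is to use the hypothesis that $A/AeA$ is $\Delta$-filtered in order to locate $\Delta_\mathtt{j}$ inside the essential image of the recollement functor $i\colon \Modu{A/AeA}\to \Modu{A}$. This is exactly what Lemma \ref{lem:comp-second} provides: for every $\mathtt{j}\in Q_0\setminus Q_0'$, one has $\Delta_\mathtt{j}=i(\Delta_\mathtt{j}^{A/AeA})$. Since the essential image of $i$ coincides with the kernel of the exact functor $e(-)$ in the recollement \eqref{eq:recol-modules} (equivalently, modules in the image of $i$ are annihilated by the ideal $AeA$, hence by $e$ in particular), I obtain $e\Delta_\mathtt{j}=0$.

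The second step is to translate this vanishing into a statement about composition factors via the multiplicity identity established in Lemma \ref{lem:comp-first}: for any $A$-module $X$ and any $\mathtt{i}\in Q_0'$, one has $[X:L_\mathtt{i}]=[eX:L_\mathtt{i}^{eAe}]$. Applying this to $X=\Delta_\mathtt{j}$ yields $[\Delta_\mathtt{j}:L_\mathtt{i}]=[e\Delta_\mathtt{j}:L_\mathtt{i}^{eAe}]=[0:L_\mathtt{i}^{eAe}]=0$ for every $\mathtt{i}\in Q_0'$. Taking the contrapositive then gives exactly the statement of the lemma.

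There is no real obstacle here: the whole argument is essentially a direct application of the two preceding lemmas, and its only subtle ingredient is recognising that having $\Delta_\mathtt{j}$ in the image of $i$ is equivalent to $e\Delta_\mathtt{j}=0$, which is a standard fact about the recollement induced by an idempotent.
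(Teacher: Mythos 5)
Your proof is correct and takes essentially the same route as the paper's: both apply Lemma \ref{lem:comp-second} to identify $\Delta_\mathtt{j}=i(\Delta_\mathtt{j}^{A/AeA})$ for $\mathtt{j}\in Q_0\setminus Q_0'$ and then conclude that an $A/AeA$-module cannot have a composition factor labelled by an element of $Q_0'$. The only cosmetic difference is that you phrase it as a contrapositive and make the final step explicit by invoking the multiplicity identity $[X:L_\mathtt{i}]=[eX:L_\mathtt{i}^{eAe}]$ from Lemma \ref{lem:comp-first}, whereas the paper argues by contradiction and treats this point as immediate.
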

\begin{proof}
   Suppose that $[\Delta_\mathtt{j}:L_\mathtt{i}]\neq 0$ and $\mathtt{i}\in Q_0'$. If we had $\mathtt{j}\in Q_0 \setminus Q_0'$, then $\Delta_\mathtt{j}=i(\Delta^{A/AeA}_\mathtt{j})$ by Lemma \ref{lem:comp-second} and this would lead to a contradiction as $\Delta_\mathtt{j}$ would be an $A/AeA$-module with a composition factor labelled by an element of $Q_0'$. Hence $\mathtt{j}\in Q_0'$.
\end{proof}

\begin{lem}
\label{lem:comp-third'}
    Let $(A,Q_0, \unlhd)$ be a left standardly stratified algebra and let $e\in A$ be an idempotent supported in $Q_0'$. Suppose that the $A$-module $D(A/AeA)$ has a filtration by proper costandard modules and let $\mathtt{i},\mathtt{j} \in Q_0$. If $[\bar{\nabla}_\mathtt{j}:L_\mathtt{i}]\neq 0$ and $\mathtt{i}\in Q_0'$, then $\mathtt{j}\in Q_0'$.
\end{lem}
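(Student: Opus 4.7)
The plan is to mimic the proof of Lemma \ref{lem:comp-third}, but using the dual result, namely Lemma \ref{lem:comp-second'}, in place of Lemma \ref{lem:comp-second}. The structural input we need is the fact that, under the hypothesis that $D(A/AeA)$ lies in $\mathcal{F}(\bar{\nabla})$, the essential image of $i\colon \Modu{A/AeA} \to \Modu{A}$ identifies the proper costandard modules over $A/AeA$ indexed by $Q_0 \setminus Q_0'$ with the proper costandard modules of $A$ of the same label. Granted this, the proof is a one-line contradiction.

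Concretely, I would argue by contraposition. Assume $[\bar{\nabla}_\mathtt{j}:L_\mathtt{i}]\neq 0$ with $\mathtt{i}\in Q_0'$, and suppose for contradiction that $\mathtt{j}\in Q_0\setminus Q_0'$. By Lemma \ref{lem:comp-second'}, the hypothesis that $D(A/AeA)$ is $\bar{\nabla}$-filtered ensures that $(A/AeA, Q_0\setminus Q_0', \unlhd)$ is left standardly stratified and that $\bar{\nabla}_\mathtt{j} = i(\bar{\nabla}_\mathtt{j}^{A/AeA})$. Then $\bar{\nabla}_\mathtt{j}$ belongs to the essential image of $i$, so all of its composition factors are of the form $L_\mathtt{k}$ with $\mathtt{k}\in Q_0\setminus Q_0'$. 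This contradicts the assumption that $L_\mathtt{i}$ with $\mathtt{i}\in Q_0'$ occurs as a composition factor of $\bar{\nabla}_\mathtt{j}$. Hence $\mathtt{j}\in Q_0'$, as required.

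There is no real obstacle here: the entire content of the lemma is packaged in Lemma \ref{lem:comp-second'}, and the only thing to verify is that the essential image $\Ima i$ is a Serre subcategory of $\Modu A$ whose simple objects are exactly the $L_\mathtt{k}$ with $\mathtt{k}\in Q_0\setminus Q_0'$ — this is the standard recollement fact recalled in Section \ref{sec:compatible-idempotents} (immediately after diagram \eqref{eq:recol-modules}) and already used implicitly in the proof of Lemma \ref{lem:comp-third}.
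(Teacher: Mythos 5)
Your proof is correct and is exactly the dualisation the paper has in mind: the paper's proof of this lemma just says ``Dualise the argument in the proof of Lemma \ref{lem:comp-third} and use Lemma \ref{lem:comp-second'},'' which is precisely the substitution you carried out, replacing $\Delta$ by $\bar{\nabla}$ and Lemma \ref{lem:comp-second} by Lemma \ref{lem:comp-second'} in the one-line contradiction argument.
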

\begin{proof}
    Dualise the argument in the proof of Lemma \ref{lem:comp-third} and use Lemma \ref{lem:comp-second'}.
\end{proof}

\begin{lem}
\label{lem:comp-fourth}
   Let $(A,Q_0, \unlhd)$ be a left standardly stratified algebra and let $e\in A$ be an idempotent supported in $Q_0'$. If the $A$-module $A/AeA$ has a $\Delta$-filtration, then $(eAe, Q_0', \unlhd)$ is left standardly stratified and $\Hom_{eAe}(eA, \bar{\nabla}_\mathtt{i}^{eAe})=\bar{\nabla}_\mathtt{i}$ for every $\mathtt{i}\in Q_0'$. Moreover, $\Delta_\mathtt{i}^{eAe}=e\Delta_\mathtt{i}$ and $\bar{\nabla}_\mathtt{i}^{eAe}=e \bar{\nabla}_\mathtt{i}$ for every $\mathtt{i}\in Q_0'$. 
\end{lem}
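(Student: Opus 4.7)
The plan is to split the statement into three parts: (a) $(eAe, Q_0', \unlhd)$ is left standardly stratified with $\Delta_\mathtt{i}^{eAe}=e\Delta_\mathtt{i}$ for $\mathtt{i}\in Q_0'$; (b) $\bar{\nabla}_\mathtt{i}^{eAe}=e\bar{\nabla}_\mathtt{i}$ for $\mathtt{i}\in Q_0'$; and (c) $\bar{\nabla}_\mathtt{i}=\Hom_{eAe}(eA, \bar{\nabla}_\mathtt{i}^{eAe})$ for $\mathtt{i}\in Q_0'$.

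For (a), I would apply the exact functor $e(-)$ to a $\Delta$-filtration of $P_\mathtt{i}$. By Lemma \ref{lem:comp-third}, $\Delta_\mathtt{j}$ has all composition factors in $Q_0\setminus Q_0'$ whenever $\mathtt{j}\in Q_0\setminus Q_0'$, so $e\Delta_\mathtt{j}=0$ for such $\mathtt{j}$; hence $eP_\mathtt{i}=P_\mathtt{i}^{eAe}$ inherits a filtration by the modules $e\Delta_\mathtt{j}$ with $\mathtt{j}\in Q_0'$ and $\mathtt{j}\unrhd \mathtt{i}$, topped by $e\Delta_\mathtt{i}$. I then proceed by downward induction on $\mathtt{i}$ in $(Q_0', \unlhd)$: assuming $e\Delta_\mathtt{j}=\Delta_\mathtt{j}^{eAe}$ for $\mathtt{j}\in Q_0'$ with $\mathtt{j}\rhd \mathtt{i}$, the kernel $M$ of $eP_\mathtt{i}\twoheadrightarrow e\Delta_\mathtt{i}$ acquires a $\Delta^{eAe}$-filtration by sections labelled $\rhd \mathtt{i}$ in $Q_0'$. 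Using that $\Hom_{eAe}(\Delta_\mathtt{j}^{eAe}, L_\mathtt{k}^{eAe})=0$ for $\mathtt{k}\neq \mathtt{j}$, the top of $M$ is a direct sum of such $L_\mathtt{j}^{eAe}$, yielding $M\subseteq \Tra{\bigoplus_{\mathtt{j}\in Q_0',\mathtt{j}\rhd \mathtt{i}} eP_\mathtt{j}}{eP_\mathtt{i}}\subseteq \ker(eP_\mathtt{i}\twoheadrightarrow \Delta_\mathtt{i}^{eAe})$. The reverse containment is automatic: since $e\Delta_\mathtt{i}$ has composition factors only in $\{L_\mathtt{k}^{eAe}: \mathtt{k}\in Q_0',\, \mathtt{k}\unlhd \mathtt{i}\}$, the surjection $eP_\mathtt{i}\twoheadrightarrow e\Delta_\mathtt{i}$ factors through $\Delta_\mathtt{i}^{eAe}$. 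Equality of kernels then yields $\Delta_\mathtt{i}^{eAe}=e\Delta_\mathtt{i}$, and the filtration has exactly the shape witnessing left standard stratification of $(eAe, Q_0', \unlhd)$.

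For (b), Lemma \ref{lem:comp-first} provides a monomorphism $e\bar{\nabla}_\mathtt{i}\hookrightarrow \bar{\nabla}_\mathtt{i}^{eAe}$. Step (a) implies $(P_\mathtt{k}^{eAe}:\Delta_\mathtt{i}^{eAe})=(P_\mathtt{k}:\Delta_\mathtt{i})$ for $\mathtt{i},\mathtt{k}\in Q_0'$, and BGG reciprocity applied on both $A$ and $eAe$ gives $[\bar{\nabla}_\mathtt{i}^{eAe}:L_\mathtt{k}^{eAe}]=(P_\mathtt{k}:\Delta_\mathtt{i})=[\bar{\nabla}_\mathtt{i}:L_\mathtt{k}]=[e\bar{\nabla}_\mathtt{i}:L_\mathtt{k}^{eAe}]$, forcing the monomorphism to be an equality.

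For (c), the unit map $\bar{\nabla}_\mathtt{i}\to \Hom_{eAe}(eA, e\bar{\nabla}_\mathtt{i})$ is injective, as its composition with the embedding into $\Hom_{eAe}(eA, I_\mathtt{i}^{eAe})=I_\mathtt{i}$ recovers $\bar{\nabla}_\mathtt{i}\hookrightarrow I_\mathtt{i}$; its cokernel, annihilated by $e(-)$, lies in $\Modu{A/AeA}$. If nonzero, it contains a simple submodule $L_\mathtt{j}$ with $\mathtt{j}\in Q_0\setminus Q_0'$, producing an extension $0\to \bar{\nabla}_\mathtt{i}\to X\to L_\mathtt{j}\to 0$ inside $I_\mathtt{i}$. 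To derive a contradiction I would establish $\Ext^1_A(L_\mathtt{j}, \bar{\nabla}_\mathtt{i})=0$: Lemma \ref{lem:comp-third} forces the composition factors of $\operatorname{rad}\Delta_\mathtt{j}$ to lie in $Q_0\setminus Q_0'$ while $\soc(\bar{\nabla}_\mathtt{i})=L_\mathtt{i}$ is labelled in $Q_0'$, hence $\Hom_A(\operatorname{rad}\Delta_\mathtt{j}, \bar{\nabla}_\mathtt{i})=0$; together with the standard vanishings $\Ext^1_A(\Delta_\mathtt{j}, \bar{\nabla}_\mathtt{i})=0$ and $\Hom_A(\Delta_\mathtt{j}, \bar{\nabla}_\mathtt{i})=0$ (for $\mathtt{j}\neq \mathtt{i}$), the long exact sequence associated to $0\to \operatorname{rad}\Delta_\mathtt{j}\to \Delta_\mathtt{j}\to L_\mathtt{j}\to 0$ gives the required vanishing. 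Then $X\cong \bar{\nabla}_\mathtt{i}\oplus L_\mathtt{j}$, contradicting $\soc(I_\mathtt{i})=L_\mathtt{i}$. The Ext-vanishing step is the main obstacle; the remainder is bookkeeping with traces and the recollement functors.
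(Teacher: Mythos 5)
Your proof is correct, and it genuinely diverges from the paper's in parts (b) and (c). Part (a) essentially matches the paper; the downward induction you set up is harmless but unnecessary, since one only needs that each nonzero $e\Delta_{\mathtt j}$ ($\mathtt j\rhd\mathtt i$, $\mathtt j\in Q_0'$) has simple top $L_{\mathtt j}^{eAe}$, which already follows from the fact that $e\Delta_{\mathtt j}$ is a nonzero quotient of $\Delta_{\mathtt j}^{eAe}$ (Lemma~\ref{lem:comp-first}) without invoking the inductive hypothesis $e\Delta_{\mathtt j}=\Delta_{\mathtt j}^{eAe}$. For (b) and (c), the paper uses a single device: by Lemma~\ref{lem:comp-third} and BGG reciprocity, $\bar\nabla_{\mathtt i}$ has an injective copresentation $0 \to \bar\nabla_{\mathtt i} \to I_{\mathtt i} \to \bigoplus_{\mathtt j \in Q_0',\,\mathtt j\unrhd\mathtt i} I_{\mathtt j}^{\oplus b_{\mathtt j}}$ supported entirely on $Q_0'$; applying $e(-)$ yields (b) via a socle argument, and applying $\Homo{eAe}{eA}{-}\circ e(-)$ reproduces the copresentation verbatim and yields (c) immediately. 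You instead prove (b) numerically, chaining BGG reciprocity (in its multiplicity form, which does hold, with the Schur-index factors cancelling since $\End_A(L_{\mathtt k})\cong\End_{eAe}(L_{\mathtt k}^{eAe})$) across $A$ and $eAe$ via the equality of $\Delta$-filtration multiplicities established in (a); and you prove (c) by showing the unit $\bar\nabla_{\mathtt i}\to\Homo{eAe}{eA}{e\bar\nabla_{\mathtt i}}$ is monic with cokernel in $\Modu{A/AeA}$, then killing the cokernel through $\Ext^1_A(L_{\mathtt j},\bar\nabla_{\mathtt i})=0$ for $\mathtt j\in Q_0\setminus Q_0'$. The latter vanishing is the heart of your (c): since all composition factors of $\radoperator\Delta_{\mathtt j}$ lie off $Q_0'$ while $\soc\bar\nabla_{\mathtt i}=L_{\mathtt i}$ lies on $Q_0'$, one gets $\Homo{A}{\radoperator\Delta_{\mathtt j}}{\bar\nabla_{\mathtt i}}=0$, and combined with $\Ext^1_A(\Delta_{\mathtt j},\bar\nabla_{\mathtt i})=0$ the long exact sequence does the rest. (The vanishing of $\Homo{A}{\Delta_{\mathtt j}}{\bar\nabla_{\mathtt i}}$ that you also record is not needed; only $\Homo{A}{\radoperator\Delta_{\mathtt j}}{\bar\nabla_{\mathtt i}}=0$ and $\Ext^1_A(\Delta_{\mathtt j},\bar\nabla_{\mathtt i})=0$ enter.) The paper's route is more economical, one copresentation serving both (b) and (c); yours is more modular and makes visible that (b) is a statement about decomposition numbers and (c) is ultimately an $\Ext^1$-vanishing in the recollement.
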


\begin{proof}
    Let $\mathtt{j} \in Q_0$. Note that $e \Delta_\mathtt{j}\neq 0$ if and only if $\Hom_A(Ae, \Delta_\mathtt{j})\neq 0$, and this happens exactly when $[\Delta_\mathtt{j}:L_\mathtt{i}]\neq 0$ for some $\mathtt{i}\in Q_0'$. By Lemma \ref{lem:comp-third}, the latter situation holds exactly when $\mathtt{j}\in Q_0'$.

    We claim that $(eAe, Q_0', \unlhd)$ is left standardly stratified with $e\Delta_\mathtt{i}=\Delta_\mathtt{i}^{eAe}$ for every $\mathtt{i} \in Q_0'$. Lemma \ref{lem:comp-first} implies that $e\Delta_\mathtt{i}$ is a quotient of $\Delta_\mathtt{i}^{eAe}$.  Fix  $\mathtt{i} \in Q_0'$ and consider the short exact sequence
    \begin{equation}
    \label{eq:standardses}
    \begin{tikzcd}
        0 \ar[r] & U_\mathtt{i} \ar[r] & P_\mathtt{i} \ar[r] & \Delta_\mathtt{i} \ar[r] & 0
    \end{tikzcd},
    \end{equation}
    where $U_\mathtt{i}$ is filtered by modules $\Delta_\mathtt{j}$ with $\mathtt{j} \rhd \mathtt{i}$. By the considerations in the first paragraph of the proof, the (exact) functor $e(-): \Modu{A} \to \Modu{eAe}$ maps \eqref{eq:standardses} to the short exact sequence
    \[
    \begin{tikzcd}
        0 \ar[r] & eU_\mathtt{i} \ar[r] & P_\mathtt{i}^{eAe} \ar[r] & e\Delta_\mathtt{i} \ar[r] & 0
    \end{tikzcd},
    \]
    where $eU_\mathtt{i}$ is filtered by $e\Delta_\mathtt{j}$ with $\mathtt{j} \rhd \mathtt{i}$ and $\mathtt{j} \in Q_0'$. Consequently, $e\Delta_\mathtt{i}$ cannot be a proper quotient of $\Delta_\mathtt{i}^{A/AeA}$ as all the composition factors in the top of $eU_\mathtt{i}$ have a label strictly greater than $\mathtt{i}$. Hence $\Delta_\mathtt{i}^{A/AeA}=e\Delta_\mathtt{i}$ and $(eAe, Q_0', \unlhd)$ is left standardly stratified. 

    As before, fix $\mathtt{i} \in Q_0'$. Note that $e\bar{\nabla}_\mathtt{i}$ is a non-zero submodule of $\bar{\nabla}_\mathtt{i}^{eAe}$ by Lemma \ref{lem:comp-first}. Lemma \ref{lem:comp-third} (together with BGG reciprocity) implies that $\bar{\nabla}_\mathtt{i}$ has an injective copresentation of the form
    \begin{equation}
    \label{eq:propercostandardses}
    \begin{tikzcd}
        0 \ar[r] & \bar{\nabla}_\mathtt{i} \ar[r] & I_\mathtt{i} \ar[r, "f_\mathtt{i}"] & \bigoplus_{\substack{\mathtt{j} \in Q_0' \\ \mathtt{j} \unrhd \mathtt{i}}} I_\mathtt{j}^{\oplus b_\mathtt{j}}
    \end{tikzcd}.
    \end{equation}
    By applying $e(-): \Modu{A} \to \Modu{eAe}$ to \eqref{eq:propercostandardses}, we conclude that the simples appearing in the socle of $I_\mathtt{i}^{eAe}/e\bar{\nabla}_\mathtt{i}$ are of the form $L_\mathtt{j}^{eAe}$ with $\mathtt{j} \in Q_0'$ and $\mathtt{j} \unrhd \mathtt{i}$, so we must have $\bar{\nabla}_\mathtt{i}^{eAe}=e\bar{\nabla}_\mathtt{i}$. Finally, observe that composition $\Hom_{eAe}(eA,-) \circ e(-): \Modu{A} \to \Modu{A}$ maps \eqref{eq:propercostandardses} to the exact sequence
    \[
    \begin{tikzcd}
        0 \ar[r] & \Hom_{eAe}(eA,\bar{\nabla}^{eAe}_\mathtt{i}) \ar[r] & I_\mathtt{i} \ar[r, "f_\mathtt{i}"] & \bigoplus_{\substack{\mathtt{j} \in Q_0' \\ \mathtt{j} \unrhd \mathtt{i}}} I_\mathtt{j}^{\oplus b_\mathtt{j}}
    \end{tikzcd}.
    \]
    Thus $\Hom_{eAe}(eA,\bar{\nabla}^{eAe}_\mathtt{i})=\bar{\nabla}_\mathtt{i}$. \end{proof}

\begin{lem}
\label{lem:comp-fourth'}
    Let $(A,Q_0, \unlhd)$ be a left standardly stratified algebra and let $e\in A$ be an idempotent supported in $Q_0'$. If the $A$-module $D(A/AeA)$ has a $\bar{\nabla}$-filtration, then $(eAe, Q_0', \unlhd)$ is left standardly stratified and $Ae\otimes_{eAe} \Delta_\mathtt{i}^{eAe}= \Delta_\mathtt{i}$ for every $\mathtt{i}\in Q_0'$. Moreover, $\Delta_\mathtt{i}^{eAe}=e\Delta_\mathtt{i}$ and $\bar{\nabla}_\mathtt{i}^{eAe}=e \bar{\nabla}_\mathtt{i}$ for every $\mathtt{i}\in Q_0'$.
\end{lem}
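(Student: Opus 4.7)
The plan is to mirror the proof of Lemma \ref{lem:comp-fourth} very closely, with the role previously played by Lemma \ref{lem:comp-third} now taken over by Lemma \ref{lem:comp-third'} combined with BGG reciprocity for standardly stratified algebras (\cite[Theorem 2.5]{ADL98}).

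First I would show that for every $\mathtt{i}\in Q_0'$ the $\Delta$-filtration of $P_\mathtt{i}$ involves only standard modules indexed by $Q_0'$: BGG reciprocity gives $(P_\mathtt{i}:\Delta_\mathtt{j})=[\bar{\nabla}_\mathtt{j}:L_\mathtt{i}]$, and by Lemma \ref{lem:comp-third'} this number vanishes for $\mathtt{j}\in Q_0\setminus Q_0'$. Consequently the kernel $U_\mathtt{i}$ of $P_\mathtt{i}\twoheadrightarrow\Delta_\mathtt{i}$ admits a $\Delta$-filtration whose subquotients are $\Delta_\mathtt{j}$ with $\mathtt{j}\rhd \mathtt{i}$ and $\mathtt{j}\in Q_0'$, and its projective cover is a direct sum of $P_\mathtt{j}$'s with the same indexing.

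Applying the exact functor $e(-)$ to the short exact sequence $0\to U_\mathtt{i}\to P_\mathtt{i}\to \Delta_\mathtt{i}\to 0$ produces $0\to eU_\mathtt{i}\to P_\mathtt{i}^{eAe}\to e\Delta_\mathtt{i}\to 0$, with $eU_\mathtt{i}$ filtered by $e\Delta_\mathtt{j}$, $\mathtt{j}\rhd \mathtt{i}$, $\mathtt{j}\in Q_0'$, and generated as a submodule of $P_\mathtt{i}^{eAe}$ by images of $eAe$-maps from $P_\mathtt{j}^{eAe}$ with $\mathtt{j}\rhd \mathtt{i}$. Since $\mathtt{j}\rhd \mathtt{i}$ implies $\mathtt{j}\ntrianglelefteq \mathtt{i}$, all these images lie inside $\ker(P_\mathtt{i}^{eAe}\twoheadrightarrow \Delta_\mathtt{i}^{eAe})$, yielding a surjection $e\Delta_\mathtt{i}\twoheadrightarrow \Delta_\mathtt{i}^{eAe}$; combined with the opposite surjection supplied by Lemma \ref{lem:comp-first}, this forces $\Delta_\mathtt{i}^{eAe}=e\Delta_\mathtt{i}$, and therefore $eU_\mathtt{i}$ is $\Delta^{eAe}$-filtered by $\Delta_\mathtt{j}^{eAe}$ with $\mathtt{j}\rhd \mathtt{i}$, showing that $(eAe,Q_0',\unlhd)$ is left standardly stratified.

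For $Ae\otimes_{eAe}\Delta_\mathtt{i}^{eAe}=\Delta_\mathtt{i}$ I would apply the right exact functor $Ae\otimes_{eAe}-$ to the projective presentation $\bigoplus_{\mathtt{j}\in Q_0',\,\mathtt{j}\rhd \mathtt{i}}(P_\mathtt{j}^{eAe})^{\oplus c_\mathtt{j}}\to P_\mathtt{i}^{eAe}\to \Delta_\mathtt{i}^{eAe}\to 0$ extracted above and observe that the image of $\bigoplus P_\mathtt{j}^{\oplus c_\mathtt{j}}$ in $P_\mathtt{i}$ equals $U_\mathtt{i}$ (because $U_\mathtt{i}$ is generated as an $A$-module by its top, which lies inside $eU_\mathtt{i}$), so the cokernel is $\Delta_\mathtt{i}$. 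For $\bar{\nabla}_\mathtt{i}^{eAe}=e\bar{\nabla}_\mathtt{i}$ I would combine the inclusion $e\bar{\nabla}_\mathtt{i}\hookrightarrow \bar{\nabla}_\mathtt{i}^{eAe}$ from Lemma \ref{lem:comp-first} with a dimension count: for $\mathtt{i},\mathtt{j}\in Q_0'$, applying $e(-)$ to the $\Delta$-filtration of $P_\mathtt{j}$ (which by the first step uses only $\Delta_\mathtt{k}$, $\mathtt{k}\in Q_0'$) transports it into a $\Delta^{eAe}$-filtration of $P_\mathtt{j}^{eAe}$, so $(P_\mathtt{j}^{eAe}:\Delta_\mathtt{i}^{eAe})=(P_\mathtt{j}:\Delta_\mathtt{i})$; BGG reciprocity on both sides then reads $[\bar{\nabla}_\mathtt{i}^{eAe}:L_\mathtt{j}^{eAe}]=[\bar{\nabla}_\mathtt{i}:L_\mathtt{j}]=[e\bar{\nabla}_\mathtt{i}:L_\mathtt{j}^{eAe}]$ (the last equality by Lemma \ref{lem:comp-first}), yielding equal dimensions and hence equality of submodules. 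The only subtlety is the kernel comparison used to identify $e\Delta_\mathtt{i}$ with $\Delta_\mathtt{i}^{eAe}$; everything else is routine transport of filtrations and multiplicities across the recollement.
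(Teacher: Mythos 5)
Your proof is correct, but it takes a genuinely different route from the paper's. The paper disposes of Lemma~\ref{lem:comp-fourth'} in a single sentence --- ``This follows by dualisation'' --- meaning: regard $A/AeA$ as an $A\Op$-module with a $\bar{\Delta}^{A\Op}$-filtration, apply the right-standardly-stratified version of Lemma~\ref{lem:comp-fourth} to $A\Op$ and $e$, and then translate back through the standard duality $D$, which sends $\Hom_{eAe}(eA,-)$ to $Ae\otimes_{eAe}-$, $\bar{\nabla}$ to $\bar{\Delta}^{A\Op}$, etc. You instead re-run the internal argument of Lemma~\ref{lem:comp-fourth} directly on the left-module side, replacing the input Lemma~\ref{lem:comp-third} by Lemma~\ref{lem:comp-third'} combined with BGG reciprocity. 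Concretely: BGG reciprocity plus Lemma~\ref{lem:comp-third'} shows $(P_\mathtt{i}:\Delta_\mathtt{j})=0$ for $\mathtt{i}\in Q_0'$, $\mathtt{j}\notin Q_0'$, so the $\Delta$-filtration of $U_\mathtt{i}$ involves only labels in $Q_0'$; applying the exact functor $e(-)$ and comparing with $\Delta_\mathtt{i}^{eAe}$ from both sides (the top-of-$eU_\mathtt{i}$ argument for one surjection, Lemma~\ref{lem:comp-first} for the other) pins down $e\Delta_\mathtt{i}=\Delta_\mathtt{i}^{eAe}$ and hence the left standardly stratified structure. The trade-off: the paper's dualisation is slick and parallels Lemma~\ref{lem:comp-second'}, but requires carefully tracking how the functors and filtered classes transform under $D$ and $\Op$; your direct argument is longer but self-contained and makes visible exactly where Lemma~\ref{lem:comp-third'} enters. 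Two points worth tightening in a polished write-up: (i) your proof of $Ae\otimes_{eAe}\Delta_\mathtt{i}^{eAe}=\Delta_\mathtt{i}$ implicitly uses that the counit $Ae\otimes_{eAe}eP\to P$ is an isomorphism for $P\in\add Ae$, so $Ae\otimes_{eAe}$ applied to the $eAe$-presentation recovers the original $A$-presentation on the nose --- the informal ``generated by its top'' phrasing would benefit from being replaced by this functorial statement or by an explicit Nakayama argument for $AeU_\mathtt{i}=U_\mathtt{i}$; (ii) in the dimension count for $\bar{\nabla}_\mathtt{i}^{eAe}=e\bar{\nabla}_\mathtt{i}$ you should note that over a non-algebraically-closed base field BGG reciprocity carries $\dim\End(L)$-factors, which cancel here because $\End_A(L_\mathtt{j})\cong\End_{eAe}(L_\mathtt{j}^{eAe})$ for $\mathtt{j}\in Q_0'$ via the fully faithful part of the recollement.
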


\begin{proof}
    This follows by dualisation.
\end{proof}

\begin{lem}
\label{lem:comp-fifth}
    Let $(A,Q_0, \unlhd)$ be a left standardly stratified algebra and let $e\in A$ be an idempotent supported in $Q_0'$. If the $A$-module $A/AeA$ has a $\Delta$-filtration and the $A$-module $D(A/AeA)$ has a $\bar{\nabla}$-filtration, then $Q_0'$ is a coideal of the essential partial order $(Q_0, \unlhd_{\ess})$ associated to $(A, Q_0, \unlhd)$. 
\end{lem}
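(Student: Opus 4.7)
The plan is to reduce the coideal property for $\unlhd_{\ess}$ to the coideal property for its generating relations, which is precisely what Lemmas \ref{lem:comp-third} and \ref{lem:comp-third'} supply.

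First I would recall how $\unlhd_{\ess}$ is built: by definition it is the smallest partial order on $Q_0$ containing the binary relation $R=\{(\mathtt{j},\mathtt{i})\in Q_0\times Q_0 \mid [\Delta_\mathtt{i}:L_\mathtt{j}]\neq 0 \text{ or }[\bar{\nabla}_\mathtt{i}:L_\mathtt{j}]\neq 0\}$. Since $(A,Q_0,\unlhd)$ is left standardly stratified, the relation $R$ is already contained in $\unlhd$, so the reflexive--transitive closure of $R$ is automatically antisymmetric and hence a partial order; by minimality it coincides with $\unlhd_{\ess}$. Consequently, every relation $\mathtt{i}\unlhd_{\ess}\mathtt{k}$ is witnessed by a finite chain $\mathtt{i}=\mathtt{i}_0,\mathtt{i}_1,\dots,\mathtt{i}_n=\mathtt{k}$ of elements of $Q_0$ with $(\mathtt{i}_{l-1},\mathtt{i}_l)\in R$ for each $l\in\{1,\dots,n\}$.

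Next I would verify the coideal property one link at a time. Assume $\mathtt{i}\in Q_0'$ and $\mathtt{i}\unlhd_{\ess}\mathtt{k}$, and take a chain as above. Proceeding by induction on $l$, the base case $l=0$ is $\mathtt{i}_0=\mathtt{i}\in Q_0'$. For the inductive step, suppose $\mathtt{i}_{l-1}\in Q_0'$ and $(\mathtt{i}_{l-1},\mathtt{i}_l)\in R$. If this link is due to $[\Delta_{\mathtt{i}_l}:L_{\mathtt{i}_{l-1}}]\neq 0$, then $\mathtt{i}_l\in Q_0'$ by Lemma \ref{lem:comp-third} (using the hypothesis that $A/AeA$ has a $\Delta$-filtration); if it is due to $[\bar{\nabla}_{\mathtt{i}_l}:L_{\mathtt{i}_{l-1}}]\neq 0$, then $\mathtt{i}_l\in Q_0'$ by Lemma \ref{lem:comp-third'} (using that $D(A/AeA)$ has a $\bar{\nabla}$-filtration). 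Iterating, $\mathtt{k}=\mathtt{i}_n\in Q_0'$, which is exactly the assertion that $Q_0'$ is a coideal of $(Q_0,\unlhd_{\ess})$.

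There is essentially no hard step here: the content is entirely in the two preceding lemmas, and the only mild subtlety is the justification that $\unlhd_{\ess}$ equals the reflexive--transitive closure of $R$ (rather than some larger partial order), which is handled by observing that the closure is antisymmetric thanks to the standardly stratified structure.
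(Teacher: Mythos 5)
Your proof is correct and takes essentially the same route as the paper: both arguments unwind a defining chain for $\mathtt{i}\unlhd_{\ess}\mathtt{k}$ and apply Lemmas \ref{lem:comp-third} and \ref{lem:comp-third'} link by link via induction. You simply spell out more explicitly why $\unlhd_{\ess}$ is the reflexive–transitive closure of the generating relation and why that closure is automatically antisymmetric, which the paper leaves implicit.
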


\begin{proof}
Let $\mathtt{i},\mathtt{j} \in Q_0$ with $\mathtt{i}\unlhd_{\ess} \mathtt{j}$. Suppose that $\mathtt{i} \in Q_0'$. By definition, there is a chain 
\[\mathtt{i}=\mathtt{i}_0 \unlhd_{\ess} \cdots \unlhd_{\ess} \mathtt{i}_n =\mathtt{j},\] where for $k\in \{1, \ldots,n\}$ either $[\Delta_{\mathtt{i}_k}:L_{\mathtt{i}_{k-1}}]\neq 0$ or $[\bar{\nabla}_{\mathtt{i}_k}:L_{\mathtt{i}_{k-1}}]\neq 0$. Using Lemmas \ref{lem:comp-third} and \ref{lem:comp-third'} and induction, it follows that $\mathtt{j} \in Q_0'$.
\end{proof}

\begin{proof}[Proof of Theorem \ref{thm:defofcompatibility}]
By \cite[Theorem 2.2.6]{CPS96} and \cite[Lemma 1.3]{AHLU00}, condition \eqref{compatible-essential-coideal} implies that $(A/AeA,Q_0\setminus Q_0', \unlhd)$ is left standardly stratified with $\Delta^{A/AeA}_\mathtt{i}=\Delta_\mathtt{i}$ and $\bar{\nabla}^{A/AeA}_\mathtt{i}=\bar{\nabla}_\mathtt{i}$ for all $\mathtt{i} \in Q_0 \setminus Q_0'$. In particular, \eqref{compatible-essential-coideal} implies both \eqref{compatible-A/AeA} and \eqref{compatible-D(A/AeA)}. By Lemma \ref{lem:comp-fifth}, if both \eqref{compatible-A/AeA} and \eqref{compatible-D(A/AeA)} hold, then \eqref{compatible-essential-coideal} also holds. By Lemmas \ref{lem:comp-second} and \ref{lem:comp-fourth}, \eqref{compatible-A/AeA} implies \eqref{compatible-quotients-subalgebras}. According to Lemmas \ref{lem:comp-second'} and \ref{lem:comp-fourth'}, \eqref{compatible-D(A/AeA)} implies \eqref{compatible-quotients-subalgebras}. To show simultaneously that both \eqref{compatible-coideal} and \eqref{compatible-essential-coideal} imply \eqref{compatible-stratifying}, consider any refinement $(Q_0, \unlhd')$ of the essential order of $(A,Q_0, \unlhd)$ and suppose that the support $Q_0'$ of the idempotent $e\in A$ is a coideal of $(Q_0, \unlhd')$. It is possible to refine $(Q_0, \unlhd')$ to a total order, so that $Q_0'$ remains a coideal of the total order. It then follows from \cite[Remark 2.2.4]{CPS96}  that there exists a left standard stratification of $(A,Q_0, \unlhd')$ of length $n=|Q_0|$ containing an (idempotent) ideal $J_l$ with $l=|Q_0'|$, such that each subquotient $J_k/J_{k-1}$ is a non-zero direct sum of standard modules $\Delta_{\mathtt{i}_k}$ with $\mathtt{i}_k\in Q_0$ and $\mathtt{i}_k\neq \mathtt{i}_{k'}$ for distinct $k$ and 
$k'$, and moreover $\mathtt{i}_k\in Q_0'$ for $k\in \{1, \cdots, l\}$. By applying  \cite[Lemma 2.1.5]{CPS96} to the sequence of idempotent ideals $0\subset J_l \subset \cdots \subset J_n=A$, we conclude that the top of $J_l$ must be a direct sum of modules $L_\mathtt{i}$, $\mathtt{i}\in Q_0'$, each with multiplicity at least one. Hence, $J_l=AeA$. It remains to show that \eqref{compatible-stratifying} implies \eqref{compatible-essential-coideal}. Consider $\mathtt{j}\in Q_0'$ and let $\mathtt{i}\in Q_0$ be such that $[\Delta_\mathtt{i}:L_\mathtt{j}]\neq 0$ or $[\bar{\nabla}_\mathtt{i}:L_\mathtt{j}]\neq 0$. It suffices to show that $\mathtt{i}\in Q_0'$. If  $[\Delta_\mathtt{i}:L_\mathtt{j}]\neq 0$, Lemma \ref{lem:comp-third} implies that $\mathtt{i}\in Q_0'$. Otherwise, if $[\bar{\nabla}_\mathtt{i}:L_\mathtt{j}]\neq 0$, by BGG reciprocity, $\Delta_\mathtt{i}$ appears in a $\Delta$-filtration of $P_\mathtt{j}$ and consequently in a $\Delta$-filtration of $AeA=J_l$. Therefore, $\Delta_\mathtt{i}$ is a direct summand of $J_k/J_{k-1}$ for some $k\leq l$ and \cite[Lemma 2.1.5]{CPS96} implies that $L_\mathtt{i}$ is a composition factor of $\Kopf(J_l)$. Thus $\mathtt{i}\in Q_0'$. 
\end{proof}

In Example \ref{ex:first}, we have already seen that \eqref{compatible-A/AeA} and \eqref{compatible-D(A/AeA)} do not imply each other. It is also clear that \eqref{compatible-essential-coideal} does not generally imply \eqref{compatible-coideal}. The next example shows that the remaining one-sided implications in the statement of Theorem \ref{thm:defofcompatibility} are not equivalences, i.e.~\eqref{compatible-quotients-subalgebras} neither implies \eqref{compatible-A/AeA} nor \eqref{compatible-D(A/AeA)}. 

\begin{ex}
\label{ex:forref}
    Consider the algebra $A=\Bbbk Q/(\alpha \beta)$ where
    \[
    Q=
    \begin{tikzcd}[ column sep = small]
        \overset{\mathtt{1}}{\circ} \ar[r, bend left, "\alpha"]  & \overset{\mathtt{2}}{\circ} \ar[l, bend left, "\beta"]
    \end{tikzcd}.
    \]
    Note that $A$ is quasi-hereditary with respect to the natural order on $\{\mathtt{1},\mathtt{2}\}$, in particular it is left standardly stratified. In fact, it corresponds to the principal block of BGG category $\mathcal{O}$ of $\mathfrak{sl}_2$, see e.g. \cite[Section 5.1.1]{Stro03}. Both $A/Ae_\mathtt{1} A$ and $e_\mathtt{1} A e_\mathtt{1}$ are left standardly stratified algebras. However, $\Delta_\mathtt{2}^{A/Ae_\mathtt{1}A}$ is a proper quotient of $\Delta_\mathtt{2}$ and $\bar{\nabla}_\mathtt{2}^{A/Ae_\mathtt{1} A}$ is a proper submodule of $\bar{\nabla}_\mathtt{2}$. Neither $A/Ae_\mathtt{1}A$ is $\Delta$-filtered nor $D(A/Ae_\mathtt{1} A)$ is $\bar{\nabla}$-filtered.
\end{ex}
\begin{rmk}
    As illustrated by the previous example, the fact that an algebra $A$ is left standardly stratified with respect to $(Q_0,\unlhd)$ and satisfies condition \eqref{compatible-quotients-subalgebras} does not guarantee that the associated idempotent ideal appears in a left standard stratification of $(A, Q_0,\unlhd)$.
    
    On a related note, observe that the standardly stratified property is not, in general, inherited from both a quotient and a corner algebra. More precisely, the conditions that $A/AeA$ and $eAe$ are both left (resp.~right) standardly stratified do not imply that $A$ itself is left (resp.~right) standardly stratified. To see this, let $Q$ be the quiver from Example \ref{ex:forref} and consider the algebra $A=\Bbbk Q/(\rad{\Bbbk Q})^2$. One can check that $A$ is not left (nor right) standardly stratified with respect to any poset structure on the set of simple modules. Nevertheless, both $e_{\mathtt{1}}Ae_{\mathtt{1}}$ and $A/Ae_{\mathtt{1}}A$ are quasi-hereditary.
    
    Observe that \cite[Theorem 1]{DR89} and \cite[Theorem 2.1]{GKP19} provide additional hypotheses under which quasi-hereditary structures on $eAe$ and $A/AeA$ can be lifted to a quasi-hereditary structure on $A$, with the idempontent ideal $AeA$ appearing in an associated heredity chain. It is natural to expect that analogous results can be established for left (and right) standardly stratified algebras.
\end{rmk}

Example \ref{ex:last} shows that the fact that $A/AeA$ is filtered by $\Delta^{A/AeA}_\mathtt{i}$ with $\mathtt{i}\in Q_0\setminus Q_0'$ does not imply \eqref{compatible-A/AeA}. Similarly, there exist examples where $D(A/AeA)$ is filtered by $\bar{\nabla}^{A/AeA}_\mathtt{i}$ with $\mathtt{i}\in Q_0\setminus Q_0'$ and for which \eqref{compatible-D(A/AeA)} fails.

\begin{ex}
\label{ex:last}
    Consider the algebra $A=\Bbbk Q/(\alpha_1 \beta_1 - \beta_2\alpha_2, \alpha_2 \beta_2)$ where
    \[
    Q=
    \begin{tikzcd}[ column sep = small]
        \overset{\mathtt{1}}{\circ} \ar[r, bend left, "\alpha_1"]  & \overset{\mathtt{2}}{\circ} \ar[l, bend left, "\beta_1"] \ar[r, bend left, "\alpha_2"] & \overset{\mathtt{3}}{\circ} \ar[l, bend left, "\beta_2"]
    \end{tikzcd}.
    \]
    This is a quasi-hereditary algebra for the usual order on $\{\mathtt{1},\mathtt{2},\mathtt{3}\}$. It is the Auslander algebra of $\Bbbk [x]/(x^3)$.   The $A$-module $A/A(e_\mathtt{1}+e_\mathtt{2})A$ is isomorphic to $L_\mathtt{3}$, so it is obviously filtered by $\Delta_\mathtt{3}^{A/A(e_\mathtt{1}+e_\mathtt{2})A}$. However, $(e_\mathtt{1}+e_\mathtt{2})A(e_\mathtt{1}+e_\mathtt{2})$ is not left standardly stratified.
\end{ex}

\begin{defn}
\label{defn:compatibility}
    An idempotent $e\in A$ is \emphbf{compatible} with a left standardly stratified algebra $(A, Q_0,\unlhd)$ if $A/AeA$ is $\Delta$-filtered and $D(A/AeA)$ is $\bar{\nabla}$-filtered.
\end{defn}

 Theorem \ref{thm:defofcompatibility} provides alternative characterisations of an idempotent compatible with a left standardly stratified algebra. Naturally, Theorem \ref{thm:defofcompatibility} and Definition \ref{defn:compatibility} can be adapted to right standardly stratified algebras by using right modules in condition \eqref{compatible-stratifying} and replacing $\Delta$ by $\bar{\Delta}$ and $\bar{\nabla}$ by $\nabla$ elsewhere. If an idempotent $e\in A$ is compatible with a left (or right) standardly stratified algebra $(A, Q_0,\unlhd)$, then $AeA$ is an $m$-idempotent ideal in the sense of \cite[Section 1]{APT92} for every $m\in \N$. This follows, for instance, from \cite[Corollary 6.2]{APT92} and Remark \ref{rmk:standardstrat2}. In other words, if $e$ is a compatible idempotent, then $AeA$ is a \emphbf{$\infty$-idempotent ideal} or, equivalently said, the embedding $i\colon\Modu{A/AeA}\to \Modu{A}$ is an $\infty$-homological embedding.

\begin{defn}[{\cite[Definition 3.6]{Psa14}}]
	\label{defn:homologicalembedding}
	An exact functor $i\colon \mathcal{A}\rightarrow \mathcal{B}$ between abelian categories is an \emphbf{$m$-homological embedding} (for $m\in \Znn\cup \{\infty\}$) if it induces isomorphisms
	\[
	\Exte{\mathcal{A}}{j}{X}{Y}\longrightarrow \Exte{\mathcal{B}}{j}{i(X)}{i(Y)}
	\]
	for all $X$, $Y$ in $\mathcal{A}$ and all integers $j$ satisfying $0\leq j\leq m$. 
\end{defn}

    Note that $\infty$-idempotent ideals also go under the name of stratifying ideals and were first introduced in \cite[Definition 2.1.1, Remark 2.1.2]{CPS96}.

\begin{rmk}
    Let $e$ be an idempotent in a left standardly stratified algebra $(A, Q_0,\unlhd)$ satisfying \eqref{compatible-A/AeA}. For the embedding $i\colon \Modu{A/AeA}\to \Modu{A}$, the abelian groups $\Exte{A/AeA}{j}{X}{\Hom_A(A/AeA,Y)}$ and $\Exte{A}{j}{i(X)}{Y}$ are isomorphic for every $X$ in $\Modu{A/AeA}$, $Y$ in $\mathcal{F}(\bar{\nabla})$ and $j\geq 0$. Similarly, if \eqref{compatible-D(A/AeA)} holds, then $\Exte{A/AeA}{j}{X/AeX}{Y}\cong \Exte{A}{j}{X}{i(Y)}$ for every $X$ in $\mathcal{F}(\Delta)$, $Y$ in $\Modu{A/AeA}$ and $j\geq 0$. The previous observations follow from \cite[Propositions 1.1, 1.2]{APT92}. However, as shown in the next example, conditions \eqref{compatible-A/AeA} or \eqref{compatible-D(A/AeA)} alone are not enough to guarantee that the ideal $AeA$ is $\infty$-idempotent and not even to assure that the $\Ext$-groups between standard modules or proper costandard modules in $\Modu{A/AeA}$ and $\Modu{A}$ coincide. Ana\-lo\-gous observations hold for the right standardly stratified case.
 \end{rmk}
 
\begin{ex}
    The algebra $A=\Bbbk Q/(\beta \gamma)$ where
    \[
    Q=
    \begin{tikzcd}[ column sep = small]
        \overset{\mathtt{1}}{\circ} \ar[r, "\gamma"] & \overset{\mathtt{2}}{\circ} \ar[r, bend right, "\beta"'] \ar[r, bend left, "\alpha"] & \overset{\mathtt{3}}{\circ} 
    \end{tikzcd}
    \]
    is quasi-hereditary with respect to the natural order on $\{\mathtt{1},\mathtt{2},\mathtt{3}\}$ and has simple standard modules. The $A$-module $A/Ae_\mathtt{2}A$ is obviously $\Delta$-filtered and all the $\Ext$-groups over $A/Ae_\mathtt{2}A$ of positive degree are zero as $A/Ae_\mathtt{2}A$ is semisimple. However, $\Ext_{A}^2(L_\mathtt{1},L_\mathtt{3})\neq 0$.
\end{ex}

Given a recollement as in \eqref{eq:recol} and $m\in \Znn \cup \{\infty\}$, as is done in \cite[Section 2]{APT92} as well as \cite[Section 3.1]{Psa14}, define $\mathcal{I}_m$ as the class of objects $Y$ in $\mathcal{B}$ for which there exists an exact sequence
\[
\begin{tikzcd}
	0 \ar[r] & Y \ar[r] & r(I^0) \ar[r] & \cdots \ar[r] & r(I^m),
\end{tikzcd}
\]
with $I^j$ an injective object in $\mathcal{C}$ for every integer $j$ satisfying $0\leq j \leq m$.

Similarly, given $m\in \Znn \cup \{\infty\}$, let $\mathcal{P}_m$ be the class of objects $X$ in $\mathcal{B}$ for which there exists an exact sequence
\[
\begin{tikzcd}
	l(P_m) \ar[r] & \cdots \ar[r] & l(P_0) \ar[r] & X \ar[r] & 0,
\end{tikzcd}
\]
with $P_j$ a projective object in $\mathcal{C}$ for every integer $j$ satisfying $0\leq j \leq m$.

\begin{lem}
\label{lem:X-and-Y-infinity}
   Let $(A, Q_0,\unlhd)$ be a left (resp.~right) standardly stratified algebra, let $e\in A$ be an idempotent with support $Q_0'$ and consider the associated recollement \eqref{eq:recol-modules}. If the $A$-module $A/AeA$ is $\Delta$-filtered (resp.~$\bar{\Delta}$-filtered) then $\mathcal{F}(\{\bar{\nabla}_{\mathtt{i}}\mid \mathtt{i} \in Q_0'\})\subseteq \mathcal{I}_\infty$ (resp.~$\mathcal{F}(\{\nabla_{\mathtt{i}}\mid \mathtt{i} \in Q_0'\})\subseteq \mathcal{I}_\infty$). If the $A$-module $D(A/AeA)$ is $\bar{\nabla}$-filtered (resp.~$\nabla$-filtered) then $\mathcal{F}(\{\Delta_{\mathtt{i}}\mid \mathtt{i} \in Q_0'\})\subseteq \mathcal{P}_\infty$ (resp.~$\mathcal{F}(\{\bar{\Delta}_{\mathtt{i}}\mid \mathtt{i} \in Q_0'\})\subseteq \mathcal{P}_\infty$).
\end{lem}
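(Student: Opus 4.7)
The plan is to prove the left standardly stratified case; the right standardly stratified statements follow by parallel arguments after swapping $\Delta\leftrightarrow\bar{\Delta}$ and $\bar{\nabla}\leftrightarrow\nabla$ and using the right standardly stratified analogues of Lemmas \ref{lem:comp-fourth}--\ref{lem:comp-third'}. For the first implication assume $A/AeA$ is $\Delta$-filtered. From the discussion of the recollement one has $I_{\mathtt{i}}=r(I_{\mathtt{i}}^{eAe})$ for every $\mathtt{i}\in Q_0'$, so $I_{\mathtt{i}}$ is an injective of the shape admitted by the definition of $\mathcal{I}_\infty$, while Lemma \ref{lem:comp-fourth} yields $\bar{\nabla}_{\mathtt{i}}=r(\bar{\nabla}_{\mathtt{i}}^{eAe})$. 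Combining Lemma \ref{lem:comp-third} with BGG reciprocity \cite[Theorem 2.5]{ADL98} gives $(I_{\mathtt{i}}:\bar{\nabla}_{\mathtt{j}})=[\Delta_{\mathtt{j}}:L_{\mathtt{i}}]$, and this forces, for $\mathtt{i}\in Q_0'$, every label in the $\bar{\nabla}$-filtration of $I_{\mathtt{i}}$ (whose existence is \cite[Lemma 2.2]{D96}) to lie in $Q_0'$. More precisely, that filtration has $\bar{\nabla}_{\mathtt{i}}$ as bottom subfactor (of multiplicity $[\Delta_{\mathtt{i}}:L_{\mathtt{i}}]=1$), and $I_{\mathtt{i}}/\bar{\nabla}_{\mathtt{i}}$ is iteratively filtered by $\bar{\nabla}_{\mathtt{j}}$ with $\mathtt{j}\rhd\mathtt{i}$ in $Q_0'$.

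To conclude the first implication, I show $\bar{\nabla}_{\mathtt{i}}\in\mathcal{I}_\infty$ for every $\mathtt{i}\in Q_0'$ by noetherian induction on $(Q_0',\unlhd)$ starting from maximal elements. A key auxiliary point is that $\mathcal{I}_\infty$ is closed under extensions: this is the horseshoe lemma for injective coresolutions, using that $r$ is a right adjoint (so preserves injectives) and that $r=\Hom_{eAe}(eA,-)$ preserves finite direct sums since $eA$ is finitely generated over $eAe$. At a maximal $\mathtt{i}\in Q_0'$ the preceding paragraph forces $I_{\mathtt{i}}/\bar{\nabla}_{\mathtt{i}}=0$, so $\bar{\nabla}_{\mathtt{i}}=I_{\mathtt{i}}=r(I_{\mathtt{i}}^{eAe})$ and the trivial sequence $0\to\bar{\nabla}_{\mathtt{i}}\to r(I_{\mathtt{i}}^{eAe})$ witnesses the claim. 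At a general $\mathtt{i}\in Q_0'$, the inductive hypothesis together with extension closure places $I_{\mathtt{i}}/\bar{\nabla}_{\mathtt{i}}$ in $\mathcal{I}_\infty$; splicing one of its coresolutions $0\to I_{\mathtt{i}}/\bar{\nabla}_{\mathtt{i}}\to r(J^0)\to r(J^1)\to\cdots$ with the surjection $I_{\mathtt{i}}=r(I_{\mathtt{i}}^{eAe})\twoheadrightarrow I_{\mathtt{i}}/\bar{\nabla}_{\mathtt{i}}$ produces the required exact sequence $0\to\bar{\nabla}_{\mathtt{i}}\to r(I_{\mathtt{i}}^{eAe})\to r(J^0)\to r(J^1)\to\cdots$. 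A final application of extension closure extends the conclusion to all of $\mathcal{F}(\{\bar{\nabla}_{\mathtt{i}}\mid\mathtt{i}\in Q_0'\})$.

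The second implication is formally dual. Under the assumption that $D(A/AeA)$ is $\bar{\nabla}$-filtered, Lemma \ref{lem:comp-fourth'} identifies $\Delta_{\mathtt{i}}=l(\Delta_{\mathtt{i}}^{eAe})$ for $\mathtt{i}\in Q_0'$, while $P_{\mathtt{i}}=l(P_{\mathtt{i}}^{eAe})$ (for $\mathtt{i}\in Q_0'$) is a standard property of the recollement dual to $I_{\mathtt{i}}=r(I_{\mathtt{i}}^{eAe})$; Lemma \ref{lem:comp-third'} combined with BGG reciprocity then forces the $\Delta$-filtration of $P_{\mathtt{i}}$ for $\mathtt{i}\in Q_0'$ (with top $\Delta_{\mathtt{i}}$ and kernel iteratively filtered by $\Delta_{\mathtt{j}}$ with $\mathtt{j}\rhd\mathtt{i}$) to stay within $Q_0'$. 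Closure of $\mathcal{P}_\infty$ under extensions follows from the horseshoe lemma for projective resolutions, since $l=Ae\otimes_{eAe}-$ preserves direct sums and sends projectives to projectives. A noetherian induction on $(Q_0',\unlhd)$ from maximal elements (where $P_{\mathtt{i}}=\Delta_{\mathtt{i}}=l(P_{\mathtt{i}}^{eAe})$ is already of the required form) yields the desired resolutions by splicing. The principal technical obstacle in both implications is the containment of the relevant $\bar{\nabla}$- or $\Delta$-filtration labels inside $Q_0'$; once this is extracted from BGG reciprocity and the appropriate instance of Lemma \ref{lem:comp-third}/\ref{lem:comp-third'}, the splicing arguments are routine.
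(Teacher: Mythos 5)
Your proof is correct, and it rests on exactly the same key observation as the paper's: combining Lemma \ref{lem:comp-third} (resp.~\ref{lem:comp-third'}) with BGG reciprocity to see that, for $\mathtt{i}\in Q_0'$, the $\bar{\nabla}$-filtration of $I_\mathtt{i}$ (resp.~the $\Delta$-filtration of $P_\mathtt{i}$) is supported entirely in $Q_0'$, together with the identifications $I_\mathtt{i}=r(I_\mathtt{i}^{eAe})$ and $P_\mathtt{i}=l(P_\mathtt{i}^{eAe})$. Where you diverge is in how the argument is assembled. The paper argues directly about the \emph{minimal} injective coresolution (resp.~projective resolution) of a general $X$ in $\mathcal{F}(\{\bar{\nabla}_\mathtt{i}\mid\mathtt{i}\in Q_0'\})$: iteratively, the socle of $X$ is labelled by $Q_0'$, the first term of the coresolution is a sum of $I_\mathtt{i}$, $\mathtt{i}\in Q_0'$, and the cokernel lies again in $\mathcal{F}(\{\bar{\nabla}_\mathtt{i}\mid\mathtt{i}\in Q_0'\})$ by closure of $\mathcal{F}(\bar{\nabla})$ under cokernels of monomorphisms (and additivity of $\bar{\nabla}$-multiplicities). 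You instead induct downward on $(Q_0',\unlhd)$ to first place each $\bar{\nabla}_\mathtt{i}$ in $\mathcal{I}_\infty$ by splicing with a coresolution of $I_\mathtt{i}/\bar{\nabla}_\mathtt{i}$, and then invoke closure of $\mathcal{I}_\infty$ under extensions, which you justify via a horseshoe argument using that $r=\Hom_{eAe}(eA,-)$ preserves injectives and finite direct sums. Your route makes the base case of the induction and the extension-closure step fully explicit, which the paper leaves implicit; the paper's is shorter because it only needs that the minimal coresolution stays in the right class and does not need to prove closure of $\mathcal{I}_\infty$ under extensions. Both are sound; yours is a fair amount more verbose but fills in every technical point.
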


\begin{proof}
    Let $(A, Q_0,\unlhd)$ be left standardly stratified and let $e\in A$ be an idempotent supported in $Q_0'$. Assume that $A/AeA$ is $\Delta$-filtered and consider $X$ in $\mathcal{F}(\{\bar{\nabla}_{\mathtt{i}}\mid \mathtt{i} \in Q_0'\})$. Lemma \ref{lem:comp-third} (together with BGG reciprocity) implies that all modules appearing in a minimal injective coresolution of $X$ are a direct sum of injective indecomposables labelled by elements in $Q_0'$, i.e.~a direct sum of $\Hom_{eAe}(eA,I_{\mathtt{i}}^{eAe})$ for $\mathtt{i}\in Q_0'$.  Using the notation $r=\Hom_{eAe}(eA,-)$ from \eqref{eq:recol}, all injectives in a minimal injective coresolution of $X$ are then of the form $r(I)$ for $I$ an injective in $\Modu{eAe}$, which proves the inclusion $\mathcal{F}(\{\bar{\nabla}_{\mathtt{i}}\mid \mathtt{i} \in Q_0'\})\subseteq \mathcal{I}_\infty$. Similarly, if $D(A/AeA)$ is $\bar{\nabla}$-filtered and $X$ lies in $\mathcal{F}(\{\Delta_{\mathtt{i}}\mid \mathtt{i} \in Q_0'\})$, Lemma \ref{lem:comp-third'} implies that all the modules appearing in a minimal projective resolution of $X$ are a direct sum of projectives labelled by elements in $Q_0'$, i.e.~a direct sum of $Ae\otimes_{eAe} P_{\mathtt i}^{eAe}=l(P_{\mathtt i}^{eAe})$ for ${\mathtt i}\in Q_0'$. Hence, the inclusion $\mathcal{F}(\{\Delta_{\mathtt{i}}\mid \mathtt{i} \in Q_0'\})\subseteq \mathcal{P}_\infty$ holds whenever $D(A/AeA)$ is $\bar{\nabla}$-filtered. The right standardly stratified case is similar.
\end{proof}

The next result is an analogue of the equivalences of categories appearing in Lemmas \ref{lem:comp-second} and \ref{lem:comp-second'} and of the corresponding results for the right standardly stratified case.

\begin{cor}
\label{cor:equivalence-eAe}
 Let $(A, Q_0,\unlhd)$ be a left (resp.~right) standardly stratified algebra and let $e\in A$ be an idempotent with support $Q_0'$. Consider the functor $e(-):\Modu{A} \to \Modu{eAe}$. If the $A$-module $A/AeA$ is $\Delta$-filtered (resp.~$\bar{\Delta}$-filtered) then the functor $e(-)$ restricts to an equivalence of exact categories \[\mathcal{F}(\{\bar{\nabla}_\mathtt{i} \mid \mathtt{i} \in Q_0' \})\to \mathcal{F}(\bar{\nabla}^{eAe})\] (resp.~$\mathcal{F}(\{\nabla_\mathtt{i} \mid \mathtt{i} \in Q_0' \})\to \mathcal{F}(\nabla^{eAe})$). If the $A$-module $D(A/AeA)$ is $\bar{\nabla}$-filtered (resp.~$\nabla$-filtered) then the functor $e(-)$ restricts to an equivalence of exact categories \[\mathcal{F}(\{\Delta_\mathtt{i} \mid \mathtt{i} \in Q_0' \})\to \mathcal{F}(\Delta^{eAe})\] (resp.~$\mathcal{F}(\{{\bar{\Delta}}_\mathtt{i} \mid \mathtt{i} \in Q_0' \})\to \mathcal{F}({\bar{\Delta}}^{eAe})$).
\end{cor}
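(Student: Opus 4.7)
The plan is to exhibit the quasi-inverse of $e(-)$ as the functor $r(-)=\Hom_{eAe}(eA,-)$ coming from the recollement \eqref{eq:recol-modules}. I will focus on the left standardly stratified case under the assumption that $A/AeA$ is $\Delta$-filtered, yielding the equivalence $\mathcal{F}(\{\bar{\nabla}_\mathtt{i}\mid\mathtt{i}\in Q_0'\})\to \mathcal{F}(\bar{\nabla}^{eAe})$; the remaining statement (when $D(A/AeA)$ is $\bar{\nabla}$-filtered) will follow dually using the left adjoint $l(-)=Ae\otimes_{eAe}-$ together with $\mathcal{P}_\infty$ and Lemma \ref{lem:comp-fourth'}, and the right standardly stratified cases are entirely analogous.

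That $e(-)$ sends $\mathcal{F}(\{\bar{\nabla}_\mathtt{i}\mid\mathtt{i}\in Q_0'\})$ into $\mathcal{F}(\bar{\nabla}^{eAe})$ is immediate from exactness of $e(-)$ and the identity $e\bar{\nabla}_\mathtt{i}=\bar{\nabla}^{eAe}_\mathtt{i}$ for $\mathtt{i}\in Q_0'$ of Lemma \ref{lem:comp-fourth}. For full faithfulness, I will observe that the unit $\eta_X\colon X \to r(eX)$ of the adjunction is an isomorphism for every $X\in\mathcal{I}_\infty$: comparing $X$ with its coresolution by $r$-objects (which exists by Lemma \ref{lem:X-and-Y-infinity}) via the Five Lemma, and using that $\eta_{r(I)}$ is an isomorphism for any $eAe$-module $I$ (as $r$ is fully faithful), one concludes that $\eta_X$ is itself an isomorphism. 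The adjunction chain $\Hom_A(X,X')\cong \Hom_A(X,r(eX'))\cong\Hom_{eAe}(eX,eX')$ then delivers full faithfulness of $e(-)$ on this subcategory.

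The main obstacle is essential surjectivity, namely $r(\mathcal{F}(\bar{\nabla}^{eAe}))\subseteq\mathcal{F}(\{\bar{\nabla}_\mathtt{i}\mid\mathtt{i}\in Q_0'\})$. Since $r$ is only left exact in general, applying it to a short exact sequence in $\mathcal{F}(\bar{\nabla}^{eAe})$ could fail to stay short exact; I will instead establish the vanishing $\Ext^j_{eAe}(eA,Y)=0$ for $j\geq 1$ and $Y\in \mathcal{F}(\bar{\nabla}^{eAe})$, from which $r$ becomes exact on this subcategory. By induction on filtration length and the long exact sequence of $R^* r$, this reduces to the base case $Y=\bar{\nabla}^{eAe}_\mathtt{i}$ with $\mathtt{i}\in Q_0'$. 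For this base case, the key trick is that by Lemma \ref{lem:X-and-Y-infinity} the module $\bar{\nabla}_\mathtt{i}$ admits a coresolution $0\to\bar{\nabla}_\mathtt{i}\to r(I^0)\to r(I^1)\to\cdots$ by $r$-objects; applying the exact functor $e(-)$ and using $er\cong\id$ together with $e\bar{\nabla}_\mathtt{i}=\bar{\nabla}^{eAe}_\mathtt{i}$ yields an injective coresolution of $\bar{\nabla}^{eAe}_\mathtt{i}$ in $\Modu{eAe}$, and applying $r$ to that coresolution recovers the original exact sequence, forcing $R^j r(\bar{\nabla}^{eAe}_\mathtt{i})=0$ for $j\geq 1$. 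Once this is in hand, a short induction on filtration length, using $r(\bar{\nabla}^{eAe}_\mathtt{i})=\bar{\nabla}_\mathtt{i}$ (Lemma \ref{lem:comp-fourth}) and closure under extensions, shows that $r(Y)\in\mathcal{F}(\{\bar{\nabla}_\mathtt{i}\mid\mathtt{i}\in Q_0'\})$ for every $Y\in\mathcal{F}(\bar{\nabla}^{eAe})$. Since $e\circ r\cong\id$ by the recollement axioms and $r$ is now exact on the relevant subcategory, this yields the desired equivalence of exact categories.
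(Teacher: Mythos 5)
Your proof is correct, and while it shares the paper's key structural input (Lemma \ref{lem:X-and-Y-infinity}, i.e.~$\mathcal{F}(\{\bar\nabla_\mathtt{i}\mid\mathtt{i}\in Q_0'\})\subseteq\mathcal{I}_\infty$), the mechanics are genuinely different. The paper deduces the $\Ext$-comparison isomorphisms
\[
\Ext_A^n(X,Y)\;\xrightarrow{\ \sim\ }\;\Ext_{eAe}^n(eX,eY),\qquad n\geq 0,\ X,Y\in\mathcal{F}(\{\bar\nabla_\mathtt{i}\mid\mathtt{i}\in Q_0'\}),
\]
as a black box from \cite[Theorem 3.2]{APT92}, reading off full faithfulness from the $n=0$ case and then proving essential surjectivity by an inductive extension-lifting argument using the $n=1$ case. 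You instead unpack the adjunction $(e(-),r)$ directly: the unit $\eta_X\colon X\to r(eX)$ is shown to be an isomorphism for $X\in\mathcal{I}_\infty$ by comparing kernels in the commutative ladder obtained from the $r$-coresolution (both $X$ and $r(eX)$ are kernels of the same map $r(I^0)\to r(I^1)$), which yields full faithfulness. For essential surjectivity, rather than lifting extension classes one at a time, you exhibit $r$ as an honest quasi-inverse by establishing $R^j r(\bar\nabla_\mathtt{i}^{eAe})=0$ for $j\geq 1$: the $r$-coresolution of $\bar\nabla_\mathtt{i}$, after applying the exact functor $e(-)$ and the counit $e\circ r\cong\id$, becomes an injective coresolution of $\bar\nabla_\mathtt{i}^{eAe}$, and applying $r$ recovers the original exact sequence, so the higher derived functors vanish. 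An induction on filtration length then shows $r$ is exact on $\mathcal{F}(\bar\nabla^{eAe})$ with image in $\mathcal{F}(\{\bar\nabla_\mathtt{i}\mid\mathtt{i}\in Q_0'\})$. In effect you re-prove the portion of \cite[Theorem 3.2]{APT92} that the corollary needs, making the argument self-contained at the cost of some length; the paper's route is shorter but leans on the external reference and leaves the essential-surjectivity induction implicit. Both are valid.

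Two minor points worth tightening if you write this up: (i) the ``Five Lemma'' phrasing for $\eta_X$ is slightly loose---what you actually have is a three-term left-exact ladder with the right two vertical maps being identities, so the two left-hand terms are both kernels of the same morphism; (ii) when you say ``applying $r$ to that coresolution recovers the original exact sequence,'' you should note that the recovered differentials $r(e f^j)$ agree with the original $f^j$ under the natural isomorphism $r\circ e\circ r\cong r$, which is what makes the complexes isomorphic (not merely degreewise isomorphic).
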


\begin{proof}
     Let $(A, Q_0,\unlhd)$ be left standardly stratified and let $e\in A$ be an idempotent supported in $Q_0'$. Suppose that the $A$-module $A/AeA$ is $\Delta$-filtered. Using Lemma \ref{lem:comp-fourth} and induction on the number of proper costandard modules appearing in a $\bar{\nabla}$-filtration, it is clear that the exact functor $e(-)$ restricts to a functor from  $\mathcal{F}(\{\bar{\nabla}_\mathtt{i} \mid \mathtt{i} \in Q_0' \})$ to $\mathcal{F}(\bar{\nabla}^{eAe})$. By Lemma \ref{lem:X-and-Y-infinity} and \cite[Theorem 3.2]{APT92}, $e(-)$ induces isomorphisms $\Ext_A^n(X,Y) \to \Ext_{eAe}^n(eX,eY)$ for every $n\geq 0$ and every $X$ and $Y$ in $\mathcal{F}(\{\bar{\nabla}_{\mathtt{i}}\mid \mathtt{i} \in Q_0'\})$. In particular, the restriction $\mathcal{F}(\{\bar{\nabla}_\mathtt{i} \mid \mathtt{i} \in Q_0' \})\to \mathcal{F}(\bar{\nabla}^{eAe})$ is fully faithful. To prove that it is essentially surjective, note first that every proper costandard $eAe$-module is isomorphic to $e\bar{\nabla}_{\mathtt i}$ for some $\mathtt{i} \in Q_0'$ and then use induction and the isomorphism of $\Ext$-groups for $n=1$. The case where the $A$-module $D(A/AeA)$ is $\bar{\nabla}$-filtered is dual and follows from Lemmas \ref{lem:comp-fourth'} and \ref{lem:X-and-Y-infinity} and from \cite[Theorem 3.2]{APT92}. The corresponding results about right standardly stratified algebras can be proved in a similar way.
\end{proof}

\section{Exact Borel subalgebras} 
\label{sec:exact-borels}
Adapting from the case of BGG category $\mathcal{O}$, in \cite{Koe95}, Koenig defined the notion of an exact Borel subalgebra of a quasi-hereditary algebra. A slightly different generalisation of his notion to standardly stratified algebras than the  one we consider has been studied in \cite[Definition 1]{KM02}.

Recently, the study of exact Borel subalgebras of standardly stratified algebras has gained more attention and several versions have been investigated, e.g.~in \cite{CZ19, BPS23, Got24}. Over an algebraically closed field, the notion we provide coincides with \cite[Definition 2.3]{Got24}. For examples of exact Borel subalgebras of quasi-hereditary algebras, we refer the reader to the two surveys \cite{Kul17, Kul23}. 

\begin{defn}
\label{defn:exact-Borel}
A (unital) subalgebra $B$ of a left standardly stratified algebra $(A,Q_0,\unlhd)$ is an \emphbf{exact Borel subalgebra} if the following hold:
\begin{enumerate}
    \item\label{borel-cond1} the induction functor $A\otimes_{B}-\colon \Modu{B}\rightarrow\Modu{A}$ is exact;
    \item\label{borel-cond2} $Q_0$ indexes the isomorphism classes of simple modules over $B$ and $B$ is left standardly stratified with respect to $(Q_0, \unlhd)$ with simple standard modules;
    \item\label{borel-cond3} $A\otimes L_\mathtt{i}^{B}\cong \Delta_\mathtt{i}$ for every $\mathtt{i}\in Q_0$;
    \item\label{borel-cond4} $\End_{A}(L_\mathtt{i}^A)$ and $\End_{B}(L_\mathtt{i}^{B})$ are isomorphic as $\Bbbk$-vector spaces for every $\mathtt{i}\in Q_0$.
\end{enumerate}
\end{defn}

Exact Borel subalgebras of right standardly stratified algebras are defined similarly, by using proper standard modules instead of standard modules in \eqref{borel-cond2} and \eqref{borel-cond3}, and by replacing ``left standardly stratified'' by ``right standardly stratified'' in \eqref{borel-cond2}.

\begin{rmk}
Except in \cite{Conde2021}, the definition of an exact Borel subalgebra assumes that the underlying field $\Bbbk$ is algebraically closed. In this case, condition \eqref{borel-cond4} can be omitted since it is automatically satisfied. In general, this is not the case. As an example consider the $\mathbb{R}$-algebra $\mathbb{C}$. Since it is semisimple, it is of course quasi-hereditary. We claim that the $\mathbb{R}$-subalgebra $\mathbb{R}$ of $\mathbb{C}$ satisfies \eqref{borel-cond1}--\eqref{borel-cond3} in the above definition, but obviously not \eqref{borel-cond4}. The induction functor is clearly exact as $\mathbb{R}$ is a field. Both $\mathbb{R}$ and $\mathbb{C}$ have a unique simple module (both are fields), so \eqref{borel-cond2} holds and $\mathbb{C}\otimes_\mathbb{R} \mathbb{R} \cong \mathbb{C}$, so \eqref{borel-cond3} holds. However, we argue that semisimple algebras should only have themselves as exact Borel subalgebras, therefore we add condition \eqref{borel-cond4} in the non-algebraically closed case. 
\end{rmk}

\begin{rmk}
According to Definition \ref{defn:exact-Borel}, exact Borel subalgebras of left standardly stratified algebras are actually (directed) quasi-hereditary algebras (see \cite[Section 2]{Koe96}). It follows that exact Borel subalgebras of left or right standardly stratified algebras are always right standardly stratified with simple proper standard modules and with injective costandard modules.
\end{rmk}

    Note that a quasi-hereditary algebra may be viewed both as a left and as a right standardly stratified algebra, so a priori, one may consider two types of exact Borel subalgebras which do not necessarily coincide. It is also possible that $A$ is left standardly stratified with respect to two posets $(Q_0,\unlhd)$ and $(Q_0,\unlhd')$ yielding the same set of standard modules so that $B$ is an exact Borel subalgebra of $(A, Q_0,\unlhd)$ but not of $(A,Q_0,\unlhd')$ (see \cite[Section A.2]{KKO14} for an example illustrating this). The same peculiar phemonemon may occur for exact Borel subalgebras of right standardly stratified algebras. However, if $B$ is an exact Borel subalgebra of a standardly stratified algebra $(A,Q_0,\unlhd)$ such that 
    \begin{enumerate}
    \setcounter{enumi}{4}
        \item\label{borel-cond5} $\Ext_B^1(L_\mathtt{i}^B,L_\mathtt{j}^B)\neq 0$ implies that $\Ext_B^1(A\otimes_B L_\mathtt{i}^B,A\otimes_B L_\mathtt{j}^B)\neq 0$ for every $\mathtt{i},\mathtt{j}\in Q_0$
    \end{enumerate}
    then none of the ambiguities mentioned before occur.

\begin{lem}
\label{lem:condition(5)}
Assume that $B$ is an exact Borel subalgebra of the left (resp. right) standardly stratified algebra $(A,Q_0,\unlhd)$ and that condition \eqref{borel-cond5} above holds. Then $B$ is an exact Borel subalgebra of the left (resp. right) standardly stratified algebra $(A,Q_0,\unlhd_{\ess})$.
\end{lem}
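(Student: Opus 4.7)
The plan is to exploit that passing from $\unlhd$ to $\unlhd_{\ess}$ does not alter the standard (or proper costandard) modules of $A$, so that among the conditions of Definition \ref{defn:exact-Borel} only condition \eqref{borel-cond2} needs to be re-verified: conditions \eqref{borel-cond1}, \eqref{borel-cond3} and \eqref{borel-cond4} refer only to $A$, $B$ and the standard modules of $A$, all of which are independent of the chosen order on $Q_0$. The task therefore reduces to showing that $B$ is left standardly stratified with respect to $(Q_0,\unlhd_{\ess})$ with simple standard modules. Simplicity of these standards is immediate from $\unlhd_{\ess}\subseteq \unlhd$: any composition factor $L_\mathtt{j}^B$ of $\rad P_\mathtt{i}^B$ satisfies $\mathtt{j}\ntrianglelefteq \mathtt{i}$ (because $\Delta_\mathtt{i}^B=L_\mathtt{i}^B$ with respect to $\unlhd$), and hence also $\mathtt{j}\ntrianglelefteq_{\ess}\mathtt{i}$, forcing $\Delta_\mathtt{i}^B=L_\mathtt{i}^B$ also with respect to $\unlhd_{\ess}$.

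The substantive step is to check that every composition factor $L_\mathtt{j}^B$ of $\rad P_\mathtt{i}^B$ actually satisfies $\mathtt{j}\rhd_{\ess}\mathtt{i}$. I would first reduce this to the case $L_\mathtt{j}^B\subseteq \Kopf(\rad P_\mathtt{i}^B)$, which is equivalent to $\Ext_B^1(L_\mathtt{i}^B,L_\mathtt{j}^B)\neq 0$. Because $B$ is directed with respect to $\unlhd$ (its standard modules are simple), every deeper composition factor of $\rad P_\mathtt{i}^B$ is a composition factor of $\rad P_\mathtt{k}^B$ for some $\mathtt{k}\in \Kopf(\rad P_\mathtt{i}^B)$ with $\mathtt{k}\rhd \mathtt{i}$, and a strong induction on $\mathtt{i}$ in reverse $\unlhd$-order, combined with transitivity of $\unlhd_{\ess}$, reduces the general case to the top.

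For the top case, suppose $\Ext_B^1(L_\mathtt{i}^B,L_\mathtt{j}^B)\neq 0$. Conditions \eqref{borel-cond5} and \eqref{borel-cond3} of Definition \ref{defn:exact-Borel} combine to give $\Ext_A^1(\Delta_\mathtt{i},\Delta_\mathtt{j})\neq 0$. Let $K=\ker(P_\mathtt{i}^A\twoheadrightarrow \Delta_\mathtt{i})$ with a chosen $\Delta$-filtration $0=K_0\subset K_1\subset \cdots \subset K_n=K$ with $K_l/K_{l-1}\cong \Delta_{\mathtt{k}_l}^A$. Since $(A,Q_0,\unlhd_{\ess})$ is itself left standardly stratified with the same standard modules, each $\mathtt{k}_l$ satisfies $\mathtt{k}_l\rhd_{\ess}\mathtt{i}$. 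A nonzero representing cocycle $\phi\colon K\to \Delta_\mathtt{j}$ for $\Ext_A^1(\Delta_\mathtt{i},\Delta_\mathtt{j})$, evaluated on the minimal $l$ with $\phi|_{K_l}\neq 0$, factors through $K_l/K_{l-1}$ and yields a nonzero homomorphism $\Delta_{\mathtt{k}_l}^A\to \Delta_\mathtt{j}^A$. Such a map forces $L_{\mathtt{k}_l}$ to be a composition factor of $\Delta_\mathtt{j}^A$, so $\mathtt{k}_l\unlhd_{\ess}\mathtt{j}$; transitivity of $\unlhd_{\ess}$ then delivers $\mathtt{i}\lhd_{\ess}\mathtt{k}_l\unlhd_{\ess}\mathtt{j}$, i.e. $\mathtt{j}\rhd_{\ess}\mathtt{i}$, as required. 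The right standardly stratified case is dual (via the duality $D$, which exchanges $(A,Q_0,\unlhd)$ with $(A\Op,Q_0,\unlhd)$ and identifies the proper standard modules on one side with the proper costandard modules on the other), and condition \eqref{borel-cond5} applies unchanged. The main obstacle is extracting the index $\mathtt{k}_l$ from the non-vanishing of $\Ext^1$; this ``minimal nonvanishing step'' trick on the $\Delta$-filtration of $K$ is the technical heart of the argument, after which the essential-order bookkeeping is routine.
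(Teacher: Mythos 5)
Your proof is correct and reaches the same conclusion as the paper, but it is more self-contained: the paper's proof is a one-paragraph argument that cites Mazorchuk's alternative description of the essential order (that $\unlhd_{\ess}$ is generated by the relations with $\Hom_A(\Delta_\mathtt{i},\Delta_\mathtt{j})\neq 0$ or $\Ext^1_A(\Delta_\mathtt{i},\Delta_\mathtt{j})\neq 0$), notes that for a directed algebra only the $\Ext^1$ condition is non-trivial, and then observes that condition~\eqref{borel-cond5} says precisely that this generating set for $\unlhd_{\ess}^B$ is contained in that for $\unlhd_{\ess}^A$. Your argument replaces the citation by a direct verification: you show that the implication $\Ext^1_A(\Delta_\mathtt{i},\Delta_\mathtt{j})\neq 0 \Rightarrow \mathtt{i}\lhd_{\ess}\mathtt{j}$ holds via the ``minimal nonvanishing step'' cocycle trick on a $\Delta$-filtration of $\ker(P_\mathtt{i}\twoheadrightarrow\Delta_\mathtt{i})$, and you additionally need a reduction (via the radical filtration and strong downward induction on $\unlhd$) from arbitrary composition factors of $\rad P_\mathtt{i}^B$ to those in the top of the radical. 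The latter step is implicit in the paper because it works with the essential order of $B$, which by Mazorchuk's characterisation is generated by top-of-radical relations, so only the case $\Ext^1_B(L_\mathtt{i}^B,L_\mathtt{j}^B)\neq 0$ ever arises. In short: same mathematical content, but the paper uses the black-box lemma from \cite{Kul23} while you unpack it; the trade-off is length versus self-containedness.
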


\begin{proof}
     Suppose that $B$ is an exact Borel subalgebra of the left standardly stratified algebra $(A,Q_0,\unlhd)$. Let $\unlhd_{\ess}$ denote the essential order associated to $(A,Q_0,\unlhd)$. It is enough to check that the essential order associated to $(B,Q_0,\unlhd)$ is coarser than $(Q_0,\unlhd_{\ess})$. For this, it is convenient to use an alternative description of the essential order due to Mazorchuk, see \cite[Lemma 43]{Kul23}, whose proof generalises straightforwardly to standardly stratified algebras. Let $\mathtt{i},\mathtt{j}\in Q_0$. The essential order for $A$ can be defined as the partial order induced by $\mathtt{i}\unlhd_{\ess} \mathtt{j}$ if $\Hom_A(\Delta_\mathtt{i},\Delta_\mathtt{j})\neq 0$ or $\Ext^1_A(\Delta_\mathtt{i},\Delta_\mathtt{j})\neq 0$. Since the standard modules for $B$ are the simple modules, only the second condition is relevant for $B$ and condition \eqref{borel-cond5} precisely states that this condition is the same for $A$ and $B$. 
\end{proof}

Exact Borel subalgebras satisfy a few fundamental properties. The next result is adapted from \cite[Theorem A]{Koe95} and it is an instance of the following general phenomenon: if an exact left adjoint functor between ``stratified categories'' preserves `standard-like' objects then its right adjoint preserves `costandard-like' objects (and vice-versa). We have tried to emphasise the generality of the argument in the proof.

\begin{prop}
\label{prop:restriction}
    Let $B$ be an exact Borel subalgebra of a left (resp.~right) standardly stratified algebra $(A,Q_0,\unlhd)$. Then $\Rest(\bar{\nabla}_\mathtt{i})=\bar{\nabla}_\mathtt{i}^B=I_\mathtt{i}^B$ for every $\mathtt{i}\in Q_0$ (resp.~$\Rest(\nabla_\mathtt{i})=\nabla_\mathtt{i}^B=I_\mathtt{i}^B$), where $\Rest\colon \Modu{A}\rightarrow\Modu{B}$ denotes the restriction functor.
\end{prop}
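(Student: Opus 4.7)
The plan is to verify the two equalities separately. For the identity $\bar{\nabla}^B_\mathtt{i} = I^B_\mathtt{i}$, I would use that $B$ is a quasi-hereditary algebra with simple standard modules (so $\bar{\Delta}^B_\mathtt{j} = L^B_\mathtt{j}$) and invoke BGG reciprocity to obtain $(I^B_\mathtt{i}:\bar{\nabla}^B_\mathtt{j}) = [\bar{\Delta}^B_\mathtt{j}:L^B_\mathtt{i}] = \delta_{\mathtt{i},\mathtt{j}}$, forcing $I^B_\mathtt{i}$ to coincide with $\bar{\nabla}^B_\mathtt{i}$. The substantive content of the statement is the identity $\Rest(\bar{\nabla}_\mathtt{i}) = I^B_\mathtt{i}$, which I would prove by showing that $\Rest(\bar{\nabla}_\mathtt{i})$ is an injective $B$-module with simple socle $L^B_\mathtt{i}$.

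Both properties fall out of a single Ext computation via the adjoint pair $(A\otimes_B -,\Rest)$. Since $\Rest$ is always exact, its left adjoint preserves projectives; combined with the exactness of induction from condition \eqref{borel-cond1}, applying $A\otimes_B -$ to a projective $B$-resolution of $L^B_\mathtt{j}$ yields a projective $A$-resolution of $A\otimes_B L^B_\mathtt{j} = \Delta_\mathtt{j}$ (using condition \eqref{borel-cond3}). The adjunction then identifies the resulting $\Hom$-complexes and yields the natural isomorphism
\[
\Ext^k_B(L^B_\mathtt{j},\Rest(\bar{\nabla}_\mathtt{i})) \cong \Ext^k_A(\Delta_\mathtt{j},\bar{\nabla}_\mathtt{i})
\]
for every $\mathtt{j}\in Q_0$ and every $k\geq 0$.

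I would then invoke the classical Hom--Ext orthogonality between standard and proper costandard modules in a left standardly stratified algebra (see e.g.~\cite[Theorem~2.4]{ADL98}): $\Ext^k_A(\Delta_\mathtt{j},\bar{\nabla}_\mathtt{i}) = 0$ for $k\geq 1$, and $\Hom_A(\Delta_\mathtt{j},\bar{\nabla}_\mathtt{i}) \cong \delta_{\mathtt{i},\mathtt{j}}\End_A(L_\mathtt{i})$. Transporting this across the above isomorphism, the vanishing for $k=1$ against every simple $L^B_\mathtt{j}$ shows that $\Rest(\bar{\nabla}_\mathtt{i})$ is an injective $B$-module; the $k=0$ computation, combined with condition \eqref{borel-cond4} to identify $\End_A(L_\mathtt{i})$ with $\End_B(L^B_\mathtt{i})$, shows that the socle of $\Rest(\bar{\nabla}_\mathtt{i})$ is $L^B_\mathtt{i}$ with the correct multiplicity. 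An injective $B$-module with simple socle $L^B_\mathtt{i}$ is necessarily $I^B_\mathtt{i}$, which completes the argument.

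The right standardly stratified case proceeds identically, with $\Delta$ replaced by $\bar{\Delta}$ and $\bar{\nabla}$ replaced by $\nabla$, using the corresponding orthogonality between proper standards and costandards and noting that now $A\otimes_B L^B_\mathtt{j} = \bar{\Delta}_\mathtt{j}$. The main delicate point I anticipate is bookkeeping around condition \eqref{borel-cond4} in the non-algebraically closed setting: this is precisely what ensures the socle multiplicity matches, so that $\Rest(\bar{\nabla}_\mathtt{i})$ is the full injective hull of $L^B_\mathtt{i}$ rather than a proper submodule of it. Everything else is a formal consequence of the adjunction and standard homological properties of (standardly) stratified algebras, reflecting the general phenomenon highlighted in the paper: an exact left adjoint that sends standards to standards has a right adjoint that sends proper costandards to proper costandards.
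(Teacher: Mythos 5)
Your proof is correct and follows essentially the same route as the paper's: both compare $\Ext$-groups over $B$ and over $A$ using the exact induction functor and the adjunction, then invoke the vanishing $\Ext^{\geq 1}_A(\Delta_\mathtt{j},\bar{\nabla}_\mathtt{i})=0$ together with the $\Hom$-orthogonality and condition \eqref{borel-cond4} to pin down the module. The only stylistic deviation is that after obtaining $\Ext^1_B(L^B_\mathtt{j},\Rest(\bar{\nabla}_\mathtt{i}))=0$ for all $\mathtt{j}$, you conclude injectivity directly and compute the socle, whereas the paper first deduces a $\bar{\nabla}^B$-filtration via the $\Ext^1$-criterion of \cite[Theorem~1.6(iv)]{AHLU00} and then reads off the multiplicities; since for $B$ the standard modules are simple, the two conclusions are immediate reformulations of one another and neither argument gains or loses generality.
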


\begin{proof}
    We prove the result for left standardly stratified algebras. Fix $\mathtt{i} \in Q_0$. Consider $\mathtt{j}\in Q_0$ and a short exact sequence of $B$-modules
\[\begin{tikzcd}
0 \ar[r] & U \ar[r] & P \ar[r] & \Delta_\mathtt{j}^B \ar[r] & 0
\end{tikzcd} \]
with $P$ projective. Since $A\otimes_B-$ is exact and left adjoint to $\Rest$, we get the following diagram
\[\begin{tikzcd}
\Hom_{B}(P,\Rest(\bar{\nabla}_\mathtt{i}))\ar[d, "\sim", sloped] \ar[r] & \Hom_{B}(U,\Rest(\bar{\nabla}_\mathtt{i})) \ar[d, "\sim", sloped]\ar[r] & \Ext_{B}^{1}(\Delta_\mathtt{j}^B,\Rest(\bar{\nabla}_\mathtt{i}))  \ar[d, dashed,"\exists!"]\ar[r] & 0 \\
\Hom_{A}(A\otimes_B P,\bar{\nabla}_\mathtt{i}) \ar[r] & \Hom_{A}(A\otimes_B U,\bar{\nabla}_\mathtt{i}) \ar[r] & \Ext_{A}^{1}(A\otimes_B\Delta_\mathtt{j}^B,\bar{\nabla}_\mathtt{i}).&
\end{tikzcd} \]
As $A\otimes_B \Delta_\mathtt{j}^B\cong \Delta_\mathtt{j}$ and $\Ext_{A}^{1}(\Delta_\mathtt{j},\bar{\nabla}_\mathtt{i})=0$, both horizontal maps on the left hand side are epimorphisms. As a consequence, $\Ext_{B}^{1}(\Delta_\mathtt{j}^B,\Rest(\bar{\nabla}_\mathtt{i}))=0$. Since $\mathtt{j}\in Q_0$ is arbitrary, \cite[Theorem 1.6 (iv)]{AHLU00} implies that $\Rest(\bar{\nabla}_\mathtt{i})$ must be filtered by proper costandard $B$-modules. Now
\[\Hom_{B}(\Delta_\mathtt{j}^B,\Rest(\bar{\nabla}_\mathtt{i}))\cong \Hom_{A}(\Delta_\mathtt{j},\bar{\nabla}_\mathtt{i})\] 
so $\bar{\nabla}_\mathtt{i}^B$ is the only proper costandard module appearing in a $\bar{\nabla}^B$-filtration of $\Rest(\bar{\nabla}_\mathtt{i})$, with multiplicity
\[
\frac{\Dim{\Hom_{B}(\Delta_\mathtt{i}^B,\Rest(\bar{\nabla}_\mathtt{i}))}}{\Dim{\End_{B}(L_\mathtt{i}^B)}}=\frac{\Dim{\Hom_{A}(\Delta_\mathtt{i},\bar{\nabla}_\mathtt{i})}}{\Dim{\End_{B}(L_\mathtt{i}^B)}}=\frac{\Dim{\Hom_{A}(\Delta_\mathtt{i},\bar{\nabla}_\mathtt{i})}}{\Dim{\End_{A}(L_\mathtt{i})}}=1,
\]
where the penultimate equality uses condition \eqref{borel-cond4}. This shows that $\Rest(\bar{\nabla}_\mathtt{i})=\bar{\nabla}_\mathtt{i}^B=I_\mathtt{i}^B$.
\end{proof}

\begin{ex}
Note that even for quasi-hereditary algebras, the proposition above does not yield an equivalence of categories between $\add \bar{\nabla}$ and $\add I^B$ in general. For example, consider the Ringel dual of the dual extension algebra of the path algebra of a linearly oriented $\mathbb{A}_3$-quiver (see \cite[Example 23]{Maz10}). It is given as the path algebra of the quiver 
\[
\begin{tikzcd}
    \overset{\mathtt{2}}{\circ}\arrow[bend left]{r}{\delta}&\overset{\mathtt{1}}{\circ}\arrow[bend left]{l}{\gamma}\arrow[bend left]{r}{\beta}&\overset{\mathtt{3}}{\circ}\arrow[bend left]{l}{\alpha}
\end{tikzcd}
\]
with relations $\gamma\delta$, $\beta\alpha$, $\beta\delta\gamma\alpha$. This is a quasi-hereditary algebra with respect to the order $\mathtt{1}<\mathtt{2}<\mathtt{3}$ with regular exact Borel subalgebra generated by $\gamma$, $\beta$, $\beta\delta$. One can check that $\Kopf(\nabla_\mathtt{3})=L_\mathtt{1}$ while $\Kopf(I_\mathtt{3}^B)=(L_\mathtt{1}^B)^{\oplus 2}$. Thus, $\dim(\Hom_A(\nabla_\mathtt{3},\nabla_\mathtt{1}))=1$, but $\dim(\Hom_B(I_\mathtt{3}^B,I_\mathtt{1}^B))=2$, so the restriction functor cannot induce an equivalence of categories. 
\end{ex}

The following proposition generalises the main result of \cite{Conde2021b} from quasi-hereditary algebras to standardly stratified algebras. By \cite{Kle84}, the condition is equivalent to the fact that the dual coring of $B\subseteq A$ is normal, i.e. that it has a group-like element. 

\begin{prop}
\label{prop:normality}
Let $B$ be an exact Borel subalgebra of a standardly stratified algebra $(A,Q_0,\unlhd)$. The inclusion $\iota\colon B\hookrightarrow A$ splits as a morphism of right $B$-modules and the splitting can be chosen so that its kernel is a right ideal in $A$.
\end{prop}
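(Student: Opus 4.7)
Both conclusions of the proposition are subsumed by the following equivalent formulation: I want to exhibit a right $A$-ideal $N\subseteq A$ with $A = B\oplus N$ as $\Bbbk$-vector spaces. The projection $\pi\colon A\twoheadrightarrow B$ along such a decomposition is then automatically right $B$-linear (both summands being right $B$-stable), splits the inclusion $\iota$, and has $N$ as its kernel, which is a right $A$-ideal by construction.

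The first step is to show a right $B$-module splitting of $\iota$ exists, ignoring the ideal condition on the kernel. Condition \eqref{borel-cond1} of Definition~\ref{defn:exact-Borel} gives flatness of $A$ over $B$, hence projectivity by finite dimensionality. To understand the structure of $A_B$, I would dualise: $D(A)$ is the injective cogenerator of $\Modu{A}$ and hence admits a $\bar{\nabla}^A$-filtration (each $I_\mathtt{j}^A$ being $\bar{\nabla}^A$-filtered in a standardly stratified algebra). Proposition~\ref{prop:restriction} sends $\bar{\nabla}_\mathtt{i}^A$ to $I_\mathtt{i}^B$ under restriction, so $\Rest(D(A))$ is a direct sum of indecomposable injective $B$-modules (any filtration by injectives splits). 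Dualising once more, $A_B$ is a direct sum of indecomposable projective right $B$-modules with multiplicities $s_\mathtt{i}=(A:\Delta_\mathtt{i}^A)$ via BGG reciprocity \cite[Theorem~2.5]{ADL98}. Conditions~\eqref{borel-cond3} and~\eqref{borel-cond4} of Definition~\ref{defn:exact-Borel}, which pin down the relation between tops of standard $A$-modules and simple $B$-modules, can then be used to verify that the specific embedding $B_B\hookrightarrow A_B$ splits, not merely that $B_B$ is abstractly a summand.

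The harder part is to choose the complement to be a right $A$-ideal. Here I would adapt the inductive construction of \cite{Conde2021b}, which handles the quasi-hereditary case. One proceeds along a standard stratification $0 = J_0 \subsetneq J_1 \subsetneq \cdots \subsetneq J_n = A$ of $A$, building at each stage a right $A$-ideal $N_k$ with $J_k = (B\cap J_k) \oplus N_k$ and $N_{k-1} \subseteq N_k$. Each layer $J_k/J_{k-1}$ is a direct sum of standard modules $\Delta_\mathtt{i}^A = A\otimes_B L_\mathtt{i}^B$, so condition~\eqref{borel-cond3} singles out a distinguished $B$-subspace — the image of $L_\mathtt{i}^B\to A\otimes_B L_\mathtt{i}^B$, $x\mapsto 1\otimes x$ — inside each layer. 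Defining $N_k/N_{k-1}$ as the right $A$-span of a $B$-complement to this subspace within $J_k/J_{k-1}$, and gluing these ideals, produces the required $N=N_n$.

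The main obstacle I anticipate is the inductive step: arranging, at each layer, a right $A$-invariant complement of $B\cap J_k$ inside $J_k$ that extends the one already chosen on $J_{k-1}$ and remains disjoint from the image of $B$ as a whole. This is where the full identification $\Delta_\mathtt{i}^A = A\otimes_B L_\mathtt{i}^B$ is essential, together with Standardisation-type rigidity ensuring compatibility between the stratification of $A$ and the directed quasi-hereditary structure of $B$.
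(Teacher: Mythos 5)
Your first step (that $\iota$ splits as a morphism of right $B$-modules) is broadly consistent with the paper's approach: one uses Proposition~\ref{prop:restriction} and duality to see that $\Rest'(\bar{\Delta}_\mathtt{i}^{A\Op})\cong P_\mathtt{i}^{B\Op}$, from which $A_B$ is projective and the split monic follows. (A minor slip: the multiplicities should be $(A\Op:\bar{\Delta}_\mathtt{i}^{A\Op})=(D(A):\bar{\nabla}_\mathtt{i})$, not $(A:\Delta_\mathtt{i}^A)$, though this does not affect the argument.)

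The genuine gap is in your second step. You propose to produce a right $A$-ideal complement to $B$ by an inductive construction along a standard stratification $0=J_0\subsetneq\cdots\subsetneq J_n=A$, but you then explicitly flag the inductive step as an unresolved ``obstacle,'' and it is: there is no reason that the right $A$-span of a $B$-complement inside $J_k/J_{k-1}$ should remain disjoint from the distinguished copy of $L_\mathtt{i}^B$ (or from $B$ as a whole), nor that a complement extending $N_{k-1}$ can always be chosen. You have stated a plan, not closed it.

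The paper avoids the inductive construction entirely and obtains both conclusions in one stroke by an evaluation-at-the-identity trick. Having identified $B\cong\Rest'\bigl(\bigoplus_{\mathtt{i}}(\bar{\Delta}_\mathtt{i}^{A\Op})^{\oplus a_\mathtt{i}}\bigr)$ by some right $B$-module isomorphism $\varphi$, one defines a right $A$-module morphism $\overline{\varphi}\colon A\to\bigoplus_{\mathtt{i}}(\bar{\Delta}_\mathtt{i}^{A\Op})^{\oplus a_\mathtt{i}}$ by $\overline{\varphi}(a)=\varphi(1_B)a$. Because $\iota$ is unital, $\Rest'(\overline{\varphi})\circ\iota=\varphi$, so $\iota$ is split monic in $\Modu{B\Op}$, and the kernel of the resulting splitting coincides (as a subset of $A$) with $\ker\overline{\varphi}$, which is a right $A$-submodule of $A$, i.e.~a right ideal. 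This is the idea your proposal is missing, and without it the plan does not go through.
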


\begin{proof}
We use Proposition \ref{prop:restriction} and proceed exactly like in \cite[Theorem 3.2]{Conde2021}. For completeness, we prove the result for left standardly stratified algebras. Let $\Rest'\colon \Modu{A\Op} \rightarrow \Modu{B\Op}$ denote the restriction functor on the right. Regard the embedding  of $B$ into $A$ as a morphism $\iota\colon B \hookrightarrow \Rest '(A)$ of right $B$-modules. The opposite algebra $A\Op$ of $A$ is right standardly stratified with respect to the same indexing poset $(Q_0, \unlhd)$ and $\bar{\Delta}_\mathtt{i}^{A\Op}=D(\bar{\nabla}_\mathtt{i})$. Using the standard duality and Proposition \ref{prop:restriction}, we get
	\[
	\Rest'( \bar{\Delta}_\mathtt{i}^{A\Op}) \cong \Rest'( D( \bar{\nabla}_\mathtt{i}) ) \cong D( \Rest ( \bar{\nabla}_\mathtt{i}) ) \cong D( I_\mathtt{i}^B) \cong P_\mathtt{i}^{B\Op}.
	\]
	Suppose that $B$ decomposes as $\bigoplus_{\mathtt{i}\in Q_0}(P_\mathtt{i}^{B\Op})^{\oplus a_\mathtt{i}}$ as a right $B$-module, for certain positive integers $a_\mathtt{i}$. There must exist some isomorphism
	\[\varphi\colon B\longrightarrow \Rest'\left( \bigoplus_{\mathtt{i}\in Q_0}( \bar{\Delta}_\mathtt{i}^{A\Op})^{\oplus a_\mathtt{i}} \right)\]
	in $\Modu{B\Op}$. Let $\overline{\varphi}\colon A\rightarrow \bigoplus_{i\in Q_0}(\bar{\Delta}_\mathtt{i}^{A\Op})^{\oplus a_\mathtt{i}}$ be the morphism in $\Modu{A\Op}$ given by $\overline{\varphi}(a)=\varphi(1_B)a$ for $a\in A$. Note that
\[
\varphi(b)=\varphi(1_B)b=\varphi(1_B)\iota(b)=\overline{\varphi}(\iota(b))=( \Rest'(\overline{\varphi})\circ \iota ) (b).
\]
This means that the map $\iota$ is a split monic in $\Modu{B\Op}$. Since the splitting epimorphism $\Rest'(\overline{\varphi})$ is the restriction of a morphism of right $A$-modules, its kernel is a right ideal in $A$.
\end{proof}

In \cite{BKK20, KM23} stronger notions of exact Borel subalgebras have been considered.

\begin{defn}\label{def:homological-regular}
Let $B$ be a (unital) subalgebra of an algebra $A$ and let $Q_0$ be a set labelling the simples over $B$. The subalgebra $B$ of $A$ is \emphbf{homological} if, for every $X$ and $Y$ in $\Modu{B}$, the maps
\[\
\Ext_{B}^{n}(X,Y) \longrightarrow \Ext_{A}^{n}(A\otimes_B X,A\otimes_B Y),
\]
induced by the exact functor $A\otimes_B - $, are isomorphisms for $n\geq 2$ and epimorphisms for $n=1$. Similarly, the subalgebra $B$ is \emphbf{regular} if the maps
\[\
\Ext_{B}^{n}(L^B_\mathtt{i},L^B_\mathtt{j}) \longrightarrow \Ext_{A}^{n}(A\otimes_B L^B_\mathtt{i},A\otimes_B L^B_\mathtt{j})
\]
are isomorphisms for every $n\geq 1$ and every $\mathtt{i},\mathtt{j}\in Q_0$.
\end{defn}

By \cite[Remark 3.5]{BKK20}, every regular subalgebra is homological (see also \cite[Lemma 2.5]{Conde2021b}). Note that by restricting to regular subalgebras, one can talk unambiguously about exact Borel subalgebras of quasi-hereditary algebras, without having to specify the left or right standardly stratified structure. Moreover, it follows from Lemma \ref{lem:condition(5)} that the essential order of a regular exact Borel subalgebra must be coarser than the essential order of $(A,Q_0,\unlhd)$, so the dependence on the choice of a poset mentioned before does not arise. The results in \cite{BPS23,KKO14} assure the existence of regular exact Borel subalgebras for left standardly stratified algebras, up to equivalence (recall Definition \ref{defn:equivalent}), and in \cite{Got24} a similar result is proved for right standardly stratified algebras: given any left (resp.~right) standardly stratified algebra $(A,Q_0,\unlhd)$ there exists an equivalent left (resp.~right) standardly stratified algebra $(A',Q_0,\unlhd)$ such that $A'$ has a regular exact Borel subalgebra. To be precise, only the existence of a homological exact Borel subalgebra is claimed there. The regularisation algorithm, described in \cite{KR77} gives a possibility to pass from a homological exact Borel subalgebra (corresponding to a bocs with projective kernel) to a regular exact Borel subalgebra (corresponding to a regular bocs, i.e.~one without superfluous arrows). For quasi-hereditary algebras, the representative of the equivalence class is unique up to isomorphism as shown via different methods in \cite{Conde2021} and \cite[Section 3.4]{KM23}. The latter paper even proves the uniqueness of the embedding of the basic regular exact Borel subalgebra into this specific Morita representative. Generalising to standardly stratified algebras and strengthening the previous uniqueness theorems, in \cite[Theorem 7.2]{RR25}, Rodriguez Rasmussen proved uniqueness of basic regular exact Borel subalgebras up to conjugation.

\section{Exact Borel subalgebras and compatible idempotents}
\label{sec:exact-borels-compatible}

Suppose that $A$ is a left standardly stratified algebra with respect to an indexing poset $(Q_0, \unlhd)$. Let $B$ be an exact Borel subalgebra of $(A, Q_0, \unlhd)$ and denote by $\iota\colon B \hookrightarrow A$ the corresponding embedding. An idempotent in $B$ is obviously an idempotent in $A$, but the corresponding supports in $B$ and $A$ usually differ. To avoid ambiguity and indicate whether an idempotent $e\in B$ is considered as an element of $B$ or $A$, we write $e$ or $\iota (e)$, respectively. In this situation, there are well-defined algebra morphisms
\[\iota_|\colon eBe\longrightarrow \iota(e)A\iota(e), \quad \iota^|\colon B/BeB \longrightarrow A / A \iota(e) A,\]
satisfying $\iota_|(b)=\iota(b)$ and $\iota^|(b+BeB)=\iota(b)+A\iota(e)A$. The map $\iota_|$ is injective, but the same does not generally hold for $\iota^|$ when $e\in B$ is an arbitrary idempotent.

\begin{ex}
	\label{ex:noninjective}
	Consider the path algebra
	\[
	A=\Bbbk \left( 
	\begin{tikzcd}[ column sep = small]
	\overset{\mathtt{1}}{\circ} \ar[r] & \overset{\mathtt{2}}{\circ} & \overset{\mathtt{3}}{\circ} \ar[r]  \ar[l]  & \overset{\mathtt{4}}{\circ}
	\end{tikzcd}\right) .
	\]
	The algebra $A$ is quasi-hereditary with respect to the natural order on $\{\mathtt{1},\mathtt{2},\mathtt{3},\mathtt{4}\}$. The Morita equivalent version
	\[A'=\begin{pmatrix}
	\Bbbk & \Bbbk & 0 & 0 & 0 \\
	0 & \Bbbk & 0 & 0 & 0 \\
	0 & \Bbbk & \Bbbk & \Bbbk & \Bbbk \\
	0 & \Bbbk & \Bbbk & \Bbbk & \Bbbk \\
	0 & 0 & 0 & 0 & \Bbbk
	\end{pmatrix}\Op\cong \End_{A}(P_\mathtt{1}\oplus P_\mathtt{2}\oplus P_\mathtt{3}^{\oplus 2}\oplus P_\mathtt{4})\Op
	\]
	of $A$ is still quasi-hereditary with respect to $(\{\mathtt{1},\mathtt{2},\mathtt{3},\mathtt{4}\},\leq)$ in a canonical way. According to \cite[Example 4.22]{Conde2021b}, the algebra $A'$ has an exact Borel subalgebra
	\[
	B= \left\{\begin{pmatrix}a&b&0&0&0\\0&c&0&0&0\\0&0&e&0&g\\0&b&i&a&k\\0&0&0&0&\ell\end{pmatrix}\,\bigg|\, a,b,c,e,g,i,k,\ell\in \Bbbk\right\}^{\op}\cong\Bbbk\left(\begin{tikzcd}[ column sep = small]
	\overset{\mathtt{1}}{\circ} \ar[r]\ar[rr, bend right] & \overset{\mathtt{2}}{\circ} & \overset{\mathtt{3}}{\circ}   \ar[r]  & \overset{\mathtt{4}}{\circ}
	\end{tikzcd}\right).
	\]
	   Here the trivial path $e_\mathtt{1}$ in the path algebra isomorphic to $B$ is mapped to the matrix $E_{11}+E_{44}\in B\subseteq A'$ (here $E_{ij}$ denotes the matrix whose only non-zero entry is a $1$ lying in position $(i,j)$). By slight abuse of notation we write $e_\mathtt{1}$ even for the element $E_{11}+E_{44}$ in $B$. With this convention, we see already that $\iota(e_\mathtt{1})\in A'$ has support $\{\mathtt{1},\mathtt{3}\}$, whereas $e_\mathtt{1}\in B$ has support $\{\mathtt{1}\}$. It is easy to check that $\dim (B/Be_\mathtt{1}B)=4$ and that $\dim(A'\iota(e_\mathtt{1})+A'\iota(e_\mathtt{1})E_{34})=10$ (note that the multiplication in $A'$ is opposite to the usual matrix multiplication). Since $A'\iota(e_\mathtt{1})+A'\iota(e_\mathtt{1})E_{34}$ is a subspace of $ A'\iota(e_\mathtt{1})A'$ (in fact both spaces coincide), then \[
	\dim(A'/A'\iota(e_\mathtt{1})A')\leq \dim(A')-\dim(A'\iota(e_\mathtt{1})+A'\iota(e_\mathtt{1})E_{34})=12-10=2.
	\]
	Given that $\dim (B/Be_\mathtt{1}B)> \dim(A'/A'\iota(e_\mathtt{1}A')$, the algebra morphism $\iota^|\colon B/Be_\mathtt{1}B \to A'/A'\iota(e_\mathtt{1})A'$ induced by $\iota\colon B\hookrightarrow A'$ cannot be injective. 
\end{ex}

\begin{lem}
	\label{lem:second}
	Let $(B, Q_0, \unlhd)$ be a right standardly stratified algebra with simple proper standard modules. If $e\in B$ is an idempotent supported in $Q_0'$ and $D(B/BeB)$ is $\nabla$-filtered, then $[Be:L_\mathtt{j}^B]\neq 0$ if and only if $\mathtt{j}\in Q_0'$.
\end{lem}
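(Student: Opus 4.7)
The plan is to decompose $Be$ as a left $B$-module into indecomposable projective summands, read off which summands occur from the support of $e$, and then use BGG reciprocity together with the right-standardly-stratified analog of Lemma \ref{lem:comp-third'} to control the composition factors of those summands. The forward implication ($\mathtt{j}\in Q_0'$ implies $[Be:L_\mathtt{j}^B]\neq 0$) is essentially formal: the isomorphism $eL_\mathtt{j}^B\cong\Hom_B(Be,L_\mathtt{j}^B)$ and the definition of support force $L_\mathtt{j}^B$ to appear as a quotient of $Be$, hence as a composition factor.

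For the converse, I would decompose the projective left module $Be$ as $Be\cong \bigoplus_{\mathtt{i}\in Q_0}(P_\mathtt{i}^B)^{\oplus n_\mathtt{i}}$, apply $\Hom_B(-,L_\mathtt{i}^B)$ and use $eL_\mathtt{i}^B\cong \Hom_B(Be,L_\mathtt{i}^B)\cong\End_B(L_\mathtt{i}^B)^{\oplus n_\mathtt{i}}$ to conclude that $n_\mathtt{i}\neq 0$ if and only if $\mathtt{i}\in Q_0'$. Consequently
\[ [Be:L_\mathtt{j}^B]\;=\;\sum_{\mathtt{i}\in Q_0'}n_\mathtt{i}\,[P_\mathtt{i}^B:L_\mathtt{j}^B], \]
and the whole problem reduces to showing that $[P_\mathtt{i}^B:L_\mathtt{j}^B]=0$ for every $\mathtt{i}\in Q_0'$ and every $\mathtt{j}\in Q_0\setminus Q_0'$.

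To obtain this vanishing I would invoke BGG reciprocity for right standardly stratified algebras (the right-hand analog of the identities recalled before Lemma \ref{lem:comp-third}), which gives $[P_\mathtt{i}^B:L_\mathtt{j}^B]=(P_\mathtt{i}^B:\bar{\Delta}_\mathtt{j}^B)=[\nabla_\mathtt{j}^B:L_\mathtt{i}^B]$; here the first equality uses the standing assumption that $\bar{\Delta}_\mathtt{j}^B=L_\mathtt{j}^B$, so that every $\bar{\Delta}$-filtration of $P_\mathtt{i}^B$ is in fact a composition series. The right-standardly-stratified analog of Lemma \ref{lem:comp-third'} (obtained, as noted in the paragraph preceding Definition \ref{defn:compatibility}, by interchanging $\Delta\leftrightarrow\bar{\Delta}$ and $\bar{\nabla}\leftrightarrow\nabla$) applies because $D(B/BeB)$ is $\nabla$-filtered, yielding $[\nabla_\mathtt{j}^B:L_\mathtt{i}^B]\neq 0$ together with $\mathtt{i}\in Q_0'$ implies $\mathtt{j}\in Q_0'$. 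Contrapositive gives the required vanishing.

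The only real bookkeeping issue, and hence the point at which one must pay attention, is the left--right translation: the hypothesis is phrased in terms of the left module $D(B/BeB)$, the conclusion in terms of the left module $Be$, whereas the ``right standardly stratified'' structure of $B$ and its BGG reciprocity are dual statements about $B\Op$. Once one commits to the convention that $\bar{\Delta}_\mathtt{j}^B=L_\mathtt{j}^B$ and $\nabla_\mathtt{j}^B$ denote the proper standard and costandard \emph{left} $B$-modules attached to the right standardly stratified $B$, all three ingredients (the Hom computation of $eL_\mathtt{i}^B$, BGG reciprocity, and the dual of Lemma \ref{lem:comp-third'}) align and the argument concludes with no further input.
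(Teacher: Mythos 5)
Your proposal is correct and follows essentially the same route as the paper's (very terse) proof: decompose $Be$ into projective summands $P_\mathtt{i}^B$ with $\mathtt{i}\in Q_0'$, identify $[Be:L_\mathtt{j}^B]$ with the filtration multiplicity $(Be:\bar\Delta_\mathtt{j}^B)$ using simplicity of proper standard modules, pass via BGG reciprocity to $[\nabla_\mathtt{j}^B:L_\mathtt{i}^B]$, and conclude with the right-standardly-stratified analogue of Lemma~\ref{lem:comp-third'}. The only difference is presentational: you make explicit the decomposition of $Be$ and the left--right dualisation conventions, which the paper leaves implicit.
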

\begin{proof}
	The backwards implication is trivial. Suppose $[Be:L_\mathtt{j}^B]\neq 0$, that is $[Be:\bar{\Delta}_\mathtt{j}^B]\neq 0$ for $\mathtt{j}\in Q_0$. Lemma \ref{lem:comp-third'} (or rather the corresponding version for right standardly stratified algebras) and BGG reciprocity imply that $\mathtt{j}\in Q_0'$.
\end{proof}

As a side note, observe that, under the assumptions of the previous lemma, the idempotent $e\in B$ is actually compatible with the standardly stratified structure of $(B, Q_0, \unlhd)$. The same holds for the upcoming results.

\begin{lem}
	\label{lem:first}
	Let $B$ be an exact Borel subalgebra of a standardly stratified algebra $(A, Q_0, \unlhd)$. If $e\in B$ is an idempotent supported in $Q_0'$ and $D(B/BeB)$ is filtered by costandard $B$-modules, then $\iota(e)$ is also supported in $Q_0'$.
\end{lem}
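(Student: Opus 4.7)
The plan is to establish both inclusions between the $B$-support $Q_0'$ of $e$ and the $A$-support $Q_0''$ of $\iota(e)$, using throughout the identification $A\iota(e) \cong A \otimes_B Be$ of left $A$-modules. I shall write the argument in the left standardly stratified case; the right standardly stratified case is identical after replacing $\Delta^A$ by $\bar{\Delta}^A$ throughout, in view of the corresponding form of Definition \ref{defn:exact-Borel} for right standardly stratified algebras.

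For the inclusion $Q_0' \subseteq Q_0''$, take $\mathtt{i} \in Q_0'$. Then $L_\mathtt{i}^B$ appears in $\Kopf(Be)$, so there is an epimorphism $Be \twoheadrightarrow L_\mathtt{i}^B$. Applying the exact functor $A \otimes_B -$ and using that $A \otimes_B L_\mathtt{i}^B \cong \Delta_\mathtt{i}^A$ has top $L_\mathtt{i}^A$, one obtains a surjection $A\iota(e) \twoheadrightarrow L_\mathtt{i}^A$. Hence $\iota(e) L_\mathtt{i}^A \neq 0$, i.e.~$\mathtt{i} \in Q_0''$.

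The reverse inclusion $Q_0'' \subseteq Q_0'$ is the more delicate half, and here the plan is to compute the multiplicity $(A\iota(e) : \Delta_\mathtt{i}^A)$ in a $\Delta^A$-filtration of the projective $A$-module $A\iota(e)$ in two independent ways. On the one hand, decomposing $A\iota(e) \cong \bigoplus_\mathtt{l} (P_\mathtt{l}^A)^{\oplus \ell_\mathtt{l}}$, where $\ell_\mathtt{l}$ is the multiplicity of $L_\mathtt{l}^A$ in $\Kopf(A\iota(e))$, and using that $\Delta_\mathtt{l}^A$ occurs exactly once at the top of $P_\mathtt{l}^A$, one gets $(A\iota(e) : \Delta_\mathtt{i}^A) \geq \ell_\mathtt{i}$. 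On the other hand, applying the exact functor $A \otimes_B -$ to a composition series of $Be$ yields a $\Delta^A$-filtration of $A\iota(e)$ in which $\Delta_\mathtt{j}^A$ appears precisely $[Be : L_\mathtt{j}^B]$ times, so that $(A\iota(e) : \Delta_\mathtt{i}^A) = [Be : L_\mathtt{i}^B]$. Combining, $\ell_\mathtt{i} \leq [Be : L_\mathtt{i}^B]$, and the right-hand side vanishes for $\mathtt{i} \notin Q_0'$ by Lemma \ref{lem:second}.

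The main technical point is the well-definedness of the $\Delta^A$-filtration multiplicity $(A\iota(e) : \Delta_\mathtt{i}^A)$, independent of the chosen filtration; this I would invoke from BGG reciprocity for standardly stratified algebras, as recalled just before Lemma \ref{lem:comp-third}. Granting this, the rest of the argument uses only the exactness of $A \otimes_B -$, the identification $A \otimes_B L_\mathtt{i}^B \cong \Delta_\mathtt{i}^A$ from Definition \ref{defn:exact-Borel}, and Lemma \ref{lem:second}.
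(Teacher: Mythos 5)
Your proof is correct, and it takes a genuinely different route from the paper's. The paper argues on the costandard side: writing $\iota(e)L_\mathtt{i}^A\cong \Hom_A(A\otimes_B Be, L_\mathtt{i}^A)\cong \Hom_B(Be,\Rest(L_\mathtt{i}^A))$, it uses Proposition~\ref{prop:restriction} to sandwich $L_\mathtt{i}^B\subseteq \Rest(L_\mathtt{i}^A)\subseteq \Rest(\bar{\nabla}_\mathtt{i}^A)=\bar{\nabla}_\mathtt{i}^B$, and then reads off the support of $\Rest(L_\mathtt{i}^A)$ via Lemma~\ref{lem:comp-third'}. You instead work on the standard side: you identify $A\iota(e)\cong A\otimes_B Be$, compute $(A\iota(e):\Delta_\mathtt{i}^A)$ once from the decomposition of this projective into indecomposable projectives and once from the $\Delta^A$-filtration obtained by inducing a composition series of $Be$, and compare. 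This lets you bypass Proposition~\ref{prop:restriction} altogether and route the hard inclusion $Q_0''\subseteq Q_0'$ entirely through Lemma~\ref{lem:second}. Note that the two proofs are not completely independent, since Lemma~\ref{lem:second} is itself proved via (the right-sided analogue of) Lemma~\ref{lem:comp-third'} together with BGG reciprocity, which is the same engine the paper uses directly; but your strategy is a clean alternative that only needs exactness of $A\otimes_B-$, the isomorphism $A\otimes_B L_\mathtt{i}^B\cong\Delta_\mathtt{i}^A$, and the well-definedness of $\Delta$-filtration multiplicities for standardly stratified algebras. Your easy inclusion $Q_0'\subseteq Q_0''$ also matches the paper in not requiring the filtration hypothesis on $D(B/BeB)$.
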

\begin{proof}
	We prove the result for exact Borel subalgebras of left standardly stratified algebras. The argument works the same for exact Borel subalgebras of right standardly stratifed algebras by replacing proper costandard modules by costandard modules. Let $\mathtt{i} \in Q_0$. Note that 
	\[
	\iota (e) L_\mathtt{i} \cong \Hom_{A}(A\iota(e),L_\mathtt{i})\cong \Hom_{A}(A\otimes_B B e,L_\mathtt{i})\cong \Hom_{B}(B e,\Rest(L_\mathtt{i})).
	\]
	Recall that $\Rest(\bar{\nabla}_\mathtt{i})=\bar{\nabla}_\mathtt{i}^B=I_\mathtt{i}^B$ (see Proposition \ref{prop:restriction}), so $L_\mathtt{i}^B\subseteq \Rest(L_\mathtt{i})\subseteq \Rest(\bar{\nabla}_\mathtt{i})$. As a consequence, \[[L_\mathtt{i}^B:L_\mathtt{j}^B]\leq [\Rest(L_\mathtt{i}):L_\mathtt{j}^B]\leq[\Rest(\bar{\nabla}_\mathtt{i}):L_\mathtt{j}^B]=[\bar{\nabla}_\mathtt{i}^B:L_\mathtt{j}^B]\]
	for every $\mathtt{j}\in Q_0$. In particular, $[\Rest(L_\mathtt{i}):L_\mathtt{i}^B]\neq 0$. Moreover, $[\Rest(L_\mathtt{i}):L_\mathtt{j}^B]\neq 0$ implies that $[\bar{\nabla}_\mathtt{i}^B:L_\mathtt{j}^B]\neq 0$. As $e$ is supported in $Q_0'$, then
	\[\iota(e)L_\mathtt{i}\neq 0 \Leftrightarrow \Hom_B(B e,\Rest(L_\mathtt{i}))\neq 0 \Leftrightarrow \left(\exists \mathtt{j}\in Q_0'\colon[\Rest(L_\mathtt{i}):L_\mathtt{j}^B]\neq 0 \right).
	\]
	We claim that the equivalent statements above hold if and only if $\mathtt{i}\in Q_0'$. Since $[\Rest(L_\mathtt{i}):L_\mathtt{i}^B]\neq 0$, the backwards  implication holds as well. Conversely, if $[\Rest(L_\mathtt{i}):L_\mathtt{j}^B]\neq 0$ for some $\mathtt{j}\in Q_0'$, then $[\bar{\nabla}_\mathtt{i}^B:L_\mathtt{j}^B]\neq 0$, so $\mathtt{i}\in Q_0'$ by Lemma \ref{lem:comp-third'}. Therefore $\iota(e)$ has support $Q_0'$.
\end{proof}

Given an idempotent $e$ in an exact Borel subalgebra $B$ of a standardly startified algebra $(A, Q_0, \unlhd)$, one may construct the following diagram with recollements on the rows:
\begin{equation}
\label{eq:dotted-diagram}
\begin{tikzcd}[row sep=6.5 em, column sep=6.0 em]
\Modu{B/BeB}  \arrow[r, Green,line width=0.6 pt,no head, dotted, shift left] \arrow{r}[description]{i} & \Modu{B} \arrow{r}[description]{e(-)} \arrow[l, "\Homo{B}{B/BeB}{-}",bend left]   \arrow[d, Red,line width=0.6 pt,no head, dotted, shift left] \arrow[d,Green,line width=0.6 pt,no head, dotted, shift right] \arrow{d}[description]{A\otimes_B-} \arrow[l,"(B/BeB)\otimes_B-"', bend right, shift right] & \Modu{eBe} \arrow[l,"\Homo{eBe}{eB}{-}", bend left] \arrow[l ,"Be\otimes_{eBe}-"',bend right] \arrow[l ,Red, line width=0.6 pt,no head, dotted, bend right, shift right]                 \\
\Modu{A/A\iota(e)A} \arrow{r}[description]{i}                         & {\Modu{A}}  \arrow[l, "(A/A\iota(e)A)\otimes_{A}-"',bend right] \arrow[l,"\Homo{A}{A/A\iota(e)A}{-}", bend left] \arrow[r, Red,line width=0.6 pt,no head, dotted, shift left] \arrow[l,Green,line width=0.6 pt, no head, dotted, bend right, shift right]      \arrow{r}[description]{\iota(e)(-)}                                                         & \Modu{\iota(e)A\iota(e)} \arrow[l,"A\iota(e)\otimes_{\iota(e)A\iota(e)}-"' ,bend right, shift right] \arrow[l, "\Homo{\iota(e)A\iota(e)}{\iota(e)A}{-}", bend left]
\end{tikzcd}.
\end{equation}
We shall prove that the algebras $\iota(e)A\iota(e)$ and $A/A\iota(e)A$ still have exact Borel subalgebras, provided that the idempotent $e$ is adequately chosen.
For this, we will use the colocalisation subdiagrams of the recollements above. Similar results about $\Delta$-subalgebras of standardly stratified algebras can be proved using the localisation subdiagrams. We refer to \cite[Section 4]{Koe95} for the definition of $\Delta$-subalgebra of a quasi-hereditary algebra -- this is a dual counterpart of the notion of exact Borel subalgebra.

\begin{prop}
\label{prop:compatibility-subalgebra}
	Let $B$ be an exact Borel subalgebra of a left (resp.~right) standardly stratified algebra $(A, Q_0, \unlhd)$. Let $e\in B$ be an idempotent supported in $Q_0'$ and assume that the $B$-module $D(B/BeB)$ and the $A$-module $D(A/A\iota(e)A)$ are filtered by proper costandard modules (resp.~by costandard modules). Then $eBe$ and $\iota(e)A\iota(e)$ are both left (resp.~right) standardly stratified with respect to the subposet $(Q_0', \unlhd)$ and the algebra monomorphism
	\[\iota_|\colon eBe\lhook\joinrel\longrightarrow \iota(e)A\iota(e)\]
	turns $eBe$ into an exact Borel subalgebra of $(\iota(e) A\iota(e), Q_0', \unlhd)$. If $B$ is a homological exact Borel subalgebra of $(A, Q_0, \unlhd)$, then the exact Borel subalgebra $eBe$ of $(\iota(e)A\iota(e), Q_0', \unlhd)$ is also homological, and the corresponding result also holds for regular exact Borel subalgebras.
\end{prop}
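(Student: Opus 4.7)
The plan is to verify conditions \eqref{borel-cond1}--\eqref{borel-cond4} of Definition \ref{defn:exact-Borel} for the algebra monomorphism $\iota_|\colon eBe \hookrightarrow \iota(e)A\iota(e)$, treating the left standardly stratified case (the right one follows by dualisation). By Lemma \ref{lem:first}, $\iota(e)$ is also supported in $Q_0'$, so both $eBe$ and $\iota(e)A\iota(e)$ have their isomorphism classes of simples indexed by $Q_0'$. Applying Lemma \ref{lem:comp-fourth'} to the pairs $(B,e)$ and $(A,\iota(e))$ then shows that both $eBe$ and $\iota(e)A\iota(e)$ are left standardly stratified over $(Q_0', \unlhd)$, that $\Delta^{eBe}_{\mathtt{i}} = e\Delta^B_{\mathtt{i}} = eL^B_{\mathtt{i}} = L^{eBe}_{\mathtt{i}}$ (so $eBe$ has simple standards, yielding \eqref{borel-cond2}), and that the key identities $Be \otimes_{eBe} \Delta^{eBe}_{\mathtt{i}} = \Delta^B_{\mathtt{i}}$ and $A\iota(e) \otimes_{\iota(e)A\iota(e)} \Delta^{\iota(e)A\iota(e)}_{\mathtt{i}} = \Delta^A_{\mathtt{i}}$ hold for every $\mathtt{i}\in Q_0'$.

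The main obstacle is \eqref{borel-cond1}, the exactness of the induction functor $\iota(e)A\iota(e) \otimes_{eBe} -$. My first step would be to identify $\iota(e)A\iota(e) \cong \iota(e)A \otimes_B Be$ as $\iota(e)A\iota(e)$-$eBe$-bimodules via the natural multiplication map; this is an isomorphism because $\iota(e)A$, being a direct summand of the right $B$-module $A$, is projective (exactness of $A \otimes_B -$ forces $A$ to be projective as a right $B$-module). The problem thereby reduces to showing exactness of $Be \otimes_{eBe} -\colon \Modu{eBe} \to \Modu{B}$, since $\iota(e)A \otimes_B -$ and $\iota(e)(-)$ are visibly exact. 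For this I would invoke Corollary \ref{cor:equivalence-eAe} applied to $(B,e)$: because $D(B/BeB)$ is $\bar{\nabla}^B$-filtered and $B$ has simple standards, the functor $e(-)$ restricts to an equivalence of abelian categories between the Serre subcategory $\mathcal{S}\subseteq \Modu{B}$ of modules whose composition factors are indexed in $Q_0'$ and $\Modu{eBe}$. Using Lemma \ref{lem:comp-third'} together with BGG reciprocity, $[P^B_{\mathtt{i}}: L^B_{\mathtt{j}}]\neq 0$ and $\mathtt{i} \in Q_0'$ forces $\mathtt{j} \in Q_0'$, so $Be = \bigoplus_{\mathtt{i}\in Q_0'}(P^B_{\mathtt{i}})^{\oplus n_{\mathtt{i}}}$ lies in $\mathcal{S}$, and by right exactness every $Be\otimes_{eBe}M$ lies in $\mathcal{S}$ too. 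Being the left adjoint to the equivalence $e(-)|_\mathcal{S}$, the functor $Be\otimes_{eBe}-\colon \Modu{eBe}\to \mathcal{S}$ is itself a quasi-inverse equivalence, hence exact; composing with the exact embedding $\mathcal{S}\hookrightarrow \Modu{B}$ gives the desired exactness.

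Conditions \eqref{borel-cond3} and \eqref{borel-cond4} then follow easily. For \eqref{borel-cond3}, the bimodule identification above gives
\[\iota(e)A\iota(e) \otimes_{eBe} L^{eBe}_{\mathtt{i}} \cong \iota(e)\bigl(A \otimes_B Be \otimes_{eBe} \Delta^{eBe}_{\mathtt{i}}\bigr) \cong \iota(e)(A \otimes_B \Delta^B_{\mathtt{i}}) = \iota(e)\Delta^A_{\mathtt{i}} = \Delta^{\iota(e)A\iota(e)}_{\mathtt{i}}.\]
For \eqref{borel-cond4}, combining the $\Hom$-isomorphisms from Corollary \ref{cor:equivalence-eAe} applied to both $B$ and $A$ with condition \eqref{borel-cond4} for $B\subseteq A$ yields
\[\End_{eBe}(L^{eBe}_{\mathtt{i}}) \cong \End_B(L^B_{\mathtt{i}}) \cong \End_A(L^A_{\mathtt{i}}) \cong \End_{\iota(e)A\iota(e)}(L^{\iota(e)A\iota(e)}_{\mathtt{i}}).\]

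Finally, for the regular (and homological) refinement, I would exploit that Corollary \ref{cor:equivalence-eAe} upgrades the equivalences above to $\Ext$-isomorphisms in all positive degrees. For $\mathtt{i},\mathtt{j}\in Q_0'$ and $n\geq 1$, this yields
\[\Ext^n_{eBe}(L^{eBe}_{\mathtt{i}}, L^{eBe}_{\mathtt{j}}) \cong \Ext^n_B(L^B_{\mathtt{i}}, L^B_{\mathtt{j}}) \cong \Ext^n_A(\Delta^A_{\mathtt{i}}, \Delta^A_{\mathtt{j}}) \cong \Ext^n_{\iota(e)A\iota(e)}(\Delta^{\iota(e)A\iota(e)}_{\mathtt{i}}, \Delta^{\iota(e)A\iota(e)}_{\mathtt{j}}),\]
where the middle isomorphism uses regularity of $B\subseteq A$. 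The same strategy, applied to arbitrary $X, Y \in \Modu{eBe}$ via their counterparts $Be \otimes_{eBe} X$ and $Be \otimes_{eBe} Y$ in $\mathcal{S}$, handles the homological case.
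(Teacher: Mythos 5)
Your proof is essentially correct and verifies all four conditions of Definition \ref{defn:exact-Borel}, including condition \eqref{borel-cond4}, which the paper's own proof does not explicitly address. However, you take a genuinely different route to the key exactness step. The paper establishes exactness of $Be\otimes_{eBe}-$ by observing that $e(-)$ preserves injectives: it sends $I_\mathtt{j}^B$ to $I_\mathtt{j}^{eBe}$ for $\mathtt{j}\in Q_0'$ and to zero for $\mathtt{j}\notin Q_0'$ (by Lemma \ref{lem:second}), then invokes the standard fact that a left adjoint is exact iff its right adjoint preserves injectives. You instead show $Be\otimes_{eBe}-$ corestricts to the Serre subcategory $\mathcal{S}$ and is a quasi-inverse to the equivalence $e(-)|_\mathcal{S}$ furnished by Corollary \ref{cor:equivalence-eAe}, hence exact. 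Both arguments work; the paper's is more elementary (it does not need the full strength of Corollary \ref{cor:equivalence-eAe}), while yours is conceptually cleaner once the equivalence is available. You also carry out the reduction by identifying $\iota(e)A\iota(e)\cong\iota(e)A\otimes_B Be$ as bimodules -- your appeal to projectivity of $\iota(e)A$ here is unnecessary, since $M\otimes_B Be\cong Me$ is automatic for any right $B$-module $M$ and any idempotent $e\in B$, but the conclusion is of course correct. The paper instead records the natural isomorphism $A\otimes_B Be\cong A\iota(e)$ after the fact, working with the explicit three-functor composite $F$.

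There is one gap worth flagging. For condition \eqref{borel-cond4}, you claim the third isomorphism $\End_A(L_\mathtt{i}^A)\cong\End_{\iota(e)A\iota(e)}(L_\mathtt{i}^{\iota(e)A\iota(e)})$ follows from Corollary \ref{cor:equivalence-eAe} applied to $A$. That corollary gives an equivalence on $\mathcal{F}(\{\Delta_\mathtt{i}\mid\mathtt{i}\in Q_0'\})$, but the simple $A$-module $L_\mathtt{i}^A$ is generally not $\Delta$-filtered, so the corollary does not apply. (The first isomorphism in your chain is fine, because $L_\mathtt{i}^B=\Delta_\mathtt{i}^B$.) The statement you want is nevertheless true, but needs a different justification: for instance, choose a primitive idempotent $f$ with $Af\cong P_\mathtt{i}$ and $f=efe$ (possible since $\mathtt{i}\in Q_0'$); then $\dim_\Bbbk\End_A(L_\mathtt{i})=\dim_\Bbbk fL_\mathtt{i}=\dim_\Bbbk f(eL_\mathtt{i})=\dim_\Bbbk\End_{eAe}(eL_\mathtt{i})$, and since Definition \ref{defn:exact-Borel}\eqref{borel-cond4} only requires an isomorphism of $\Bbbk$-vector spaces, this suffices. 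Alternatively one can cite the composition-multiplicity statement in Lemma \ref{lem:comp-first}, which implicitly contains this count. Your argument for the homological and regular refinements, while terse, is sound: the necessary $\Ext$-isomorphisms in degree $n\geq 1$ between modules in $\mathcal{S}$ (resp.~$\mathcal{F}(\{\Delta_\mathtt{i}\mid\mathtt{i}\in Q_0'\})$) and $\Modu{eBe}$ (resp.~$\mathcal{F}(\Delta^{\iota(e)A\iota(e)})$) are exactly what Corollary \ref{cor:equivalence-eAe} and its proof (via Lemma \ref{lem:X-and-Y-infinity} and \cite[Theorem 3.2]{APT92}) provide, and this is the same device the paper uses.
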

\begin{proof}
	We prove the result for left standardly stratified algebras. For the right standardly stratified case the proof is analogous. Suppose that the idempotent $e\in B$ is such that the $B$-module $D(B/BeB)$ is filtered by proper costandard $B$-modules and the $A$-module $D(A/A\iota(e)A)$ is filtered by proper costandard $A$-modules. By Lemma \ref{lem:comp-fourth'}, $eBe$ is left standardly stratified with respect to $(Q_0',\unlhd)$. Using Lemmas \ref{lem:comp-fourth'} and \ref{lem:first}, we conclude in a similar manner that $\iota(e)A\iota(e)$ is left standardly stratified with respect to $(Q_0', \unlhd)$  
	
	By Lemma \ref{lem:comp-fourth'}, the standard modules over $eBe$ are given by $e \Delta_\mathtt{i}^B=e L_\mathtt{i}^B=L_\mathtt{i}^{eBe}$ for $\mathtt{i}\in Q_0'$, so $eBe$ is still a left standardly stratified algebra whose standard modules are simple. Moreover, the algebra monomorphism $\iota_|$ embeds $eBe$ into $\iota(e)A\iota(e)$, as previously remarked. The morphism $\iota_|$ turns every $\iota(e) A\iota(e)$-module into an $eBe$-module by restriction of the action. We shall prove that the corresponding induction functor $\iota(e)A\iota(e)\otimes_{eBe} -$ is exact and takes simples to the corresponding standards, thus establishing that $eBe$ is an exact Borel subalgebra of $(\iota(e)A\iota(e),Q_0', \unlhd)$. For this, consider the following composition of functors (follow the red dots in \eqref{eq:dotted-diagram})
	\[	F\colon 
	\begin{tikzcd}[ column sep = huge]
	\Modu{eBe} \ar[r, "Be\otimes_{eBe}-"] & \Modu{B} \ar[r, "A\otimes_B-"] & \Modu{A}\ar[r, "\iota(e)(-)"] & \Modu{\iota(e)A\iota(e)}
	\end{tikzcd}.\]
	We claim that $F$ is exact. Since $A\otimes_B-$ and $\iota(e)(-)$ are clearly exact, it suffices to show that $Be\otimes_{eBe}-$ is exact. Since $\Modu{B}$ has enough injectives, $Be\otimes_{eBe}-$ is exact if and only if $e(-)$ preserves injectives (see \cite[\href{https://stacks.math.columbia.edu/tag/015Z}{Tag 015Z}]{stacks-project}). If $\mathtt{j}\in Q_0'$, then $eI_\mathtt{j}^B=I_\mathtt{j}^{eBe}$. If $\mathtt{j}\in Q_0\setminus Q_0'$, then $eI_\mathtt{j}^B\cong\Hom_B(Be, I_\mathtt{j}^B)=0$ by Lemma \ref{lem:second}. Thus $F$ is exact. Note that $A\otimes_B Be \cong A \iota (e)$, so $F$ is naturally isomorphic to $\iota(e)A\iota(e)\otimes_{eBe}-$.
	
	We now prove that $F$ maps standards to standards. For $\mathtt{i} \in Q_0'$,  $Be\otimes_{eBe}-$ takes $\Delta_\mathtt{i}^{eBe}$ to $\Delta_\mathtt{i}^{B}$ (see Lemma \ref{lem:comp-fourth'}). Observe that $A\otimes_B \Delta_\mathtt{i}^{B}=\Delta_\mathtt{i}$ and $\iota(e)\Delta_\mathtt{i}=\Delta_\mathtt{i}^{\iota(e)A\iota(e)}$ by Lemma \ref{lem:comp-fourth'}. This means that $F$ takes the simple $eBe$-modules to the corresponding standard $\iota(e)A\iota(e)$-modules. This concludes the proof that $eBe$ is an exact Borel subalgebra of $\iota(e)A\iota(e)$.
	
	In order to show that homologicallity and regularity are preserved, it is enough to check that the functors $e(-)$ and $\iota(e)(-)$ give rise, respectively, to isomorphisms 
	\begin{gather*}
	\Ext_{B}^{n}(\Delta_\mathtt{i}^B,\Delta_\mathtt{j}^{B})\longrightarrow \Ext_{eBe}^{n}(e\Delta_\mathtt{i}^B,e\Delta_\mathtt{j}^B)=\Ext_{B}^{n}(\Delta_\mathtt{i}^{eBe},\Delta_\mathtt{j}^{eBe}),\\
	\Ext_{A}^{n}(\Delta_\mathtt{i},\Delta_\mathtt{j})\longrightarrow \Ext_{\iota(e)A\iota(e)}^{n}(\iota(e)\Delta_\mathtt{i},\iota(e)\Delta_\mathtt{j})=\Ext_{\iota(e)A\iota(e)}^{n}(\Delta_\mathtt{i}^{\iota(e)A\iota(e)},\Delta_\mathtt{j}^{\iota(e)A\iota(e)}),
	\end{gather*}
	for every $\mathtt{i},\mathtt{j}\in Q_0'$ and $n\geq 1$. The reason why both maps are isomorphisms is a consequence of Lemma \ref{lem:X-and-Y-infinity} and \cite[Theorem 3.2]{APT92} (cf.~Corollary \ref{cor:equivalence-eAe}  and its proof). 
\end{proof}

\begin{rmk}
\label{rmk:coideal}
    In the statement of Proposition \ref{prop:compatibility-subalgebra} and in the next results, we have tried to keep our base assumptions to a minimum. Observe that the hypotheses of Proposition \ref{prop:compatibility-subalgebra}, Lemma \ref{lem:injective} and Proposition \ref{prop:compatibility-quotient} are always satisfied whenever the support of the idempotent $e\in B$ is a coideal of $(Q_0,\unlhd)$ -- this follows from Theorem \ref{thm:defofcompatibility} and Lemma \ref{lem:first}.
\end{rmk}

As seen in Example \ref{ex:noninjective}, if $\iota\colon B\hookrightarrow A$ is the embedding of an exact Borel subalgebra $B$ into a standardly stratified algebra $(A, Q_0, \unlhd)$ and $e$ is an idempotent in $B$, then the induced algebra morphism $\iota^|\colon B/BeB\to A/A\iota(e)A$ is not necessarily injective. 
\begin{lem}
	\label{lem:injective}
	Let $B$ be an exact Borel subalgebra of a standardly stratified algebra $(A, Q_0, \unlhd)$. Let $e\in B$ be an idempotent such that the $B$-module $D(B/BeB)$ is filtered by costandard modules and $\iota(e)$ is compatible with the standardly stratified structure of $(A, Q_0, \unlhd)$. Then $A(BeB)=A\iota(e)A$ and the algebra morphism
	\[\iota^|\colon B/BeB\longrightarrow A/A\iota(e)A\]
	is injective.
\end{lem}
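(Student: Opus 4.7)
The plan is to prove the two assertions in sequence: first the identity $A(BeB)=A\iota(e)A$ of left ideals of $A$, and then the injectivity of $\iota^|$. The inclusion $A(BeB)\subseteq A\iota(e)A$ is obvious, so the bulk of the work lies in the reverse.

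My key observation is that $B/BeB$, viewed as a left $B$-module, is annihilated by $e$: indeed, $e\cdot(b+BeB)=eb+BeB$ with $eb\in eB\subseteq BeB$. Through the recollement attached to $e\in B$, this means that $B/BeB$ lies in the image of $i\colon \Modu{B/BeB}\to\Modu{B}$, so its composition factors are the simples $L_\mathtt{j}^B$ with $\mathtt{j}\in Q_0\setminus Q_0'$. Applying the exact functor $A\otimes_B-$ to a composition series of $B/BeB$ and using $A\otimes_B L_\mathtt{j}^B=\Delta_\mathtt{j}^A$ produces a $\Delta^A$-filtration of $A\otimes_B(B/BeB)=A/A(BeB)$ with subquotients of the form $\Delta_\mathtt{j}^A$, $\mathtt{j}\notin Q_0'$. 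Now Lemma \ref{lem:first} guarantees that $\iota(e)$ is also supported in $Q_0'$, and compatibility combined with Theorem \ref{thm:defofcompatibility} ensures that $Q_0'$ is a coideal of the essential order $\unlhd_{\ess}$ on $A$. For each $\mathtt{j}\notin Q_0'$, every composition factor $L_\mathtt{k}^A$ of $\Delta_\mathtt{j}^A$ satisfies $\mathtt{k}\unlhd_{\ess}\mathtt{j}$, and the coideal property forces $\mathtt{k}\notin Q_0'$, so $\iota(e)L_\mathtt{k}^A=0$. Since $\iota(e)$ is idempotent, the class of $A$-modules annihilated by $\iota(e)$ is closed under extensions (the short argument $\iota(e)M=\iota(e)^2M\subseteq\iota(e)M'=0$ whenever $\iota(e)M\subseteq M'$). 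Applied first to each $\Delta_\mathtt{j}^A$ and then inductively along the $\Delta^A$-filtration of $A/A(BeB)$, this yields $\iota(e)(A/A(BeB))=0$, i.e.\ $A\iota(e)A\subseteq A(BeB)$, completing the first identity.

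For the injectivity of $\iota^|$ I would invoke Proposition \ref{prop:normality} and write $A=B\oplus K$ as right $B$-modules with $K$ a right ideal of $A$. A short computation gives $A(BeB)=AeB=BeB+KeB$, and since $K$ is a right ideal, $Ke\subseteq K$ and hence $KeB\subseteq K$. Because $B\cap K=0$, intersecting with $B$ yields $B\cap A(BeB)=BeB$; together with the first identity this gives $\iota^{-1}(A\iota(e)A)=BeB$, so $\iota^|$ is injective. The main obstacle is reconciling the a priori different supports of $e$ in $B$ and of $\iota(e)$ in $A$---Example \ref{ex:noninjective} shows these supports can differ dramatically and the conclusion fails without further assumptions---but once Lemma \ref{lem:first} forces their equality, the coideal property of $Q_0'$ together with the idempotent annihilation trick make the rest routine.
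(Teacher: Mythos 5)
Your proof is correct, and it takes a genuinely different route from the paper on both halves of the claim.

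For the identity $A(BeB)=A\iota(e)A$, the paper works on the side of the \emph{ideal} rather than the quotient: it uses $\Rest'(A)\cong\bigoplus P_\mathtt{i}^{B\Op}$ (via Proposition \ref{prop:restriction}) to compute $A(BeB)$ as a right $B$-module, computes $\Rest'(A\iota(e)A)$ the same way, uses compatibility and Lemma \ref{lem:comp-third} to match the multiplicities over $Q_0'$, and concludes equality because one is contained in the other and they agree as $B\Op$-modules. You instead pass to the quotient: you observe $A/A(BeB)\cong A\otimes_B(B/BeB)$ has a $\Delta^A$-filtration by $\Delta_\mathtt{j}$ with $\mathtt{j}\notin Q_0'$, then use Lemma \ref{lem:first} plus the coideal condition from Theorem \ref{thm:defofcompatibility} to see each such $\Delta_\mathtt{j}$ is annihilated by $\iota(e)$, and the ``annihilated by an idempotent'' class being extension-closed finishes the reverse inclusion. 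Your argument is arguably more conceptual (it identifies $A/A(BeB)$ as an object of $\Modu{A/A\iota(e)A}$), while the paper's dimension-count argument packages the combinatorics into the $\bar\nabla$-multiplicities. One small remark: you could equally invoke Lemma \ref{lem:comp-third} directly instead of routing through Theorem \ref{thm:defofcompatibility} and the essential order --- that lemma gives precisely the contrapositive you need, and depends only on condition \eqref{compatible-A/AeA} rather than on the full compatibility assumption.

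For injectivity, the paper factors $\iota^|$ through canonical isomorphisms and $\iota\otimes_B(B/BeB)$, then uses only the \emph{splitness} of $\iota$ in $\Modu{B\Op}$ from Proposition \ref{prop:normality} (split monics are preserved by any functor). You use the decomposition $A=B\oplus K$ more explicitly, observing $A(BeB)=BeB\oplus KeB$ with $BeB\subseteq B$, $KeB\subseteq K$, so $B\cap A(BeB)=BeB$. Your computation is elementary and self-contained; note that you only need $K$ to be a right $B$-module for $Ke\subseteq K$ and $KeB\subseteq K$, so the stronger ``$K$ is a right ideal of $A$'' clause of Proposition \ref{prop:normality} is not actually required (neither the paper nor your argument needs it).

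Minor caveat matching the paper's convention: you implicitly use $A\otimes_B L_\mathtt{j}^B\cong\Delta_\mathtt{j}$, which is the definition for left standardly stratified $A$; for right standardly stratified $A$ replace $\Delta$ by $\bar\Delta$ throughout and the proof goes through unchanged, as the paper also notes.
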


\begin{proof}
	We prove the result for left standardly stratified algebras. Denote the restriction functor on the right by $\Rest'\colon \Modu{A\Op} \to \Modu{B\Op}$ and write $\Rest\colon \Modu{A}\to\Modu{B}$ for the restriction functor on the left, as usual. Recall that $A\Op$ is a right standardly stratified algebra with respect to $(Q_0,\unlhd)$. Since $\Rest(\bar{\nabla}_\mathtt{i})=\bar{\nabla}_\mathtt{i}^B=I_\mathtt{i}^B$ (see Proposition \ref{prop:restriction}), then 
	\[\Rest'(\bar{\Delta}_\mathtt{i}^{A\Op})=\Rest'(D(\bar{\nabla}_\mathtt{i}))=D(\Rest(\bar{\nabla}_\mathtt{i}))=D(I_\mathtt{i}^B)=P_\mathtt{i}^{B\Op}.\]
	We get the following identities of $B\Op$-modules
	\begin{align*}
	A(BeB)&= \Rest'(A)(BeB)\cong \left(\bigoplus_{\mathtt{i}\in Q_0} ( P_\mathtt{i}^{B\Op})^{\oplus(A\Op:\bar{\Delta}_\mathtt{i}^{A\Op})}\right)eB\\
 &=\bigoplus_{\mathtt{i}\in Q_0} \left( e_\mathtt{i} BeB\right)^{\oplus(D(A):\bar{\nabla}_\mathtt{i})}=\bigoplus_{\mathtt{i}\in Q_0'} \left( e_\mathtt{i} BeB\right)^{\oplus(D(A):\bar{\nabla}_\mathtt{i})}=\bigoplus_{\mathtt{i}\in Q_0'}  (e_\mathtt{i} B)^{\oplus(D(A):\bar{\nabla}_\mathtt{i})},
	  \end{align*}
    where the penultimate equality follows from Lemma \ref{lem:second}.
    Recall that $\iota(e)$ is supported in $Q_0'$ by Lemma \ref{lem:first}. The $A$-submodule $D(A/A\iota(e)A)$ of $D(A)$ is therefore filtered by proper costandard modules of the form $\bar{\nabla}_\mathtt{i}$ with $\mathtt{i}\in Q_0 \setminus Q_0'$. Hence, the expression above reduces to 
    \[
        \bigoplus_{\mathtt{i}\in Q_0'}  (e_\mathtt{i} B)^{\oplus(D(A\iota(e)A):\bar{\nabla}_\mathtt{i})}.
    \]
	On the other hand,
 \[\Rest'(A\iota(e)A)=\bigoplus_{\mathtt{i}\in Q_0} \left( e_\mathtt{i} B\right)^{\oplus(D(A\iota(e)A):\bar{\nabla}_\mathtt{i})}=\bigoplus_{\mathtt{i}\in Q_0'} \left( e_\mathtt{i} B\right)^{\oplus(D(A\iota(e)A):\bar{\nabla}_\mathtt{i})}.\]
	Here, for the last equality, we use the following facts. First, note that $A\iota(e)A$ can be regarded as the largest $A\Op$-submodule of the $A\Op$-module $A$ generated by the projectives $P_\mathtt{i}^{A\Op}$ with $\mathtt{i}\in Q_0'$. Therefore, each proper standard module $\bar{\Delta}^{A\Op}_\mathtt{j}$ appearing in a $\bar{\Delta}^{A\Op}$-filtration of $A\iota(e)A$ must also appear in a $\bar{\Delta}^{A\Op}$-filtration of $P_\mathtt{i}^{A\Op}$ for some $\mathtt{i}\in Q_0'$. So, if $\bar{\nabla}_\mathtt{j}$ appears in a $\bar{\nabla}$-filtration of $D(A\iota(e)A)$, it must also appear in a $\bar{\nabla}$-filtration of $I_\mathtt{i}$ for some $\mathtt{i}\in Q_0'$ and Lemma \ref{lem:comp-third} assures that $\mathtt{j}\in Q_0'$. Since $A(BeB) \subseteq A\iota(e)A$, we must have an equality.
	
	Observe now that the algebra morphism $\iota^|$ defined previously can be obtained through the composition
	\[
	\begin{tikzcd}[column sep=large]
	B/BeB \ar[r, "\operatorname{can}_1"] & B\otimes_B (B/BeB) \ar[r, "\iota\otimes_B (B/BeB)"] & A\otimes_B (B/BeB)\ar[r, "\operatorname{can}_2"] & A/ A(BeB)=A/A\iota(e)A
	\end{tikzcd},\]
	where $\operatorname{can}_1$ and $\operatorname{can}_2$ are canonical isomorphisms. In order to prove that $\iota^|$ is injective it is enough to show that $\iota\otimes_B (B/BeB)$ is injective. In fact, the monic $\iota\colon B\hookrightarrow A$ splits as a morphism of right $B$-modules by Proposition \ref{prop:normality}, so its image under any functor starting in $\Modu{B\Op}$ must be still be a split monic. In particular $\iota\otimes_B (B/BeB)$ is certainly injective.
\end{proof}

\begin{rmk}
   Suppose the hypothesis of Lemma \ref{lem:injective} holds. It is natural to wonder how the bimodule $(BeB)A$ looks like and how it relates to $A\iota(e)A$ or $BeB$. It can happen that $(BeB)A$ is properly contained in $A\iota(e)A$. In fact, by considering Example \ref{ex:noninjective} and taking the idempotent $E_{55}\in B$ in there, one can check (again taking into account that multiplication is opposite to usual matrix multiplication) that $(BE_{55}B)A'$ is 3-dimensional whereas $A' \iota(E_{55}) A'$ is 6-dimensional (see \cite[Example 4.22]{Conde2021b} for more details on this example). By taking $e$ to be the identity on $B$ it is clear that, in general, $BeB$ is properly contained in $(BeB)A$ and in $A(BeB)$.
\end{rmk}

\begin{prop}
\label{prop:compatibility-quotient}
    Let $B$ be an exact Borel subalgebra of a left (resp.~right) standardly stratified algebra $(A, Q_0, \unlhd)$ and let $e\in B$ be an idempotent with support $Q_0'$. Assume that the $B$-module $D(B/BeB)$ is filtered by costandard modules and that $\iota(e)$ is compatible with the standardly stratified structure of $(A, Q_0, \unlhd)$. The algebras $B/BeB$ and $A/A\iota(e)A$ are both left (resp.~right) standardly stratified with respect to the subposet $(Q_0\setminus Q_0', \unlhd)$ and the algebra monomorphism
	\[\iota^|\colon B/BeB\lhook\joinrel\longrightarrow A/A\iota(e)A\]
	turns $B/BeB$ into an exact Borel subalgebra of $(A/A\iota(e)A, Q_0\setminus Q_0', \unlhd)$. If $B$ is a homological exact Borel subalgebra of $(A, Q_0, \unlhd)$, then the exact Borel subalgebra $B/BeB$ of $(A/A\iota(e)A, Q_0 \setminus Q_0', \unlhd)$ is also homological, and the corresponding result also holds for regular exact Borel subalgebras.
\end{prop}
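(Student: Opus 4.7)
The plan is to follow the pattern of the proof of Proposition \ref{prop:compatibility-subalgebra}, but now using the colocalisation (green) half of diagram \eqref{eq:dotted-diagram} in place of the localisation (red) half. I would first establish that both $B/BeB$ and $A/A\iota(e)A$ are left standardly stratified with respect to $(Q_0\setminus Q_0', \unlhd)$. Since $B$ is directed quasi-hereditary with simple standard modules, $B/BeB$ is automatically $\Delta^B$-filtered; combining this with the hypothesis that $D(B/BeB)$ is $\bar{\nabla}^B$-filtered, Theorem \ref{thm:defofcompatibility} implies that $e$ is compatible with $(B,Q_0,\unlhd)$, so $B/BeB$ is left standardly stratified with respect to $(Q_0\setminus Q_0', \unlhd)$ and still has simple standard modules. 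The compatibility of $\iota(e)$ with $(A,Q_0,\unlhd)$, together with Lemma \ref{lem:first} (which identifies the support of $\iota(e)$ with $Q_0'$) and Theorem \ref{thm:defofcompatibility}, yields that $A/A\iota(e)A$ is left standardly stratified with respect to $(Q_0\setminus Q_0', \unlhd)$. Finally, Lemma \ref{lem:injective} supplies the injective algebra morphism $\iota^|\colon B/BeB \hookrightarrow A/A\iota(e)A$.

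The central step is to verify that $\iota^|$ turns $B/BeB$ into an exact Borel subalgebra. The key identity is $A\iota(e)A = A\cdot \iota(BeB)$ from Lemma \ref{lem:injective}, which produces a canonical isomorphism $A\otimes_B(B/BeB) \cong A/A\iota(e)A$ of right $B/BeB$-modules. Exactness of $A\otimes_B-$ shows that $A$ is flat (and hence, as $B$ is a finite-dimensional algebra, projective) as a right $B$-module; alternatively one may invoke Proposition \ref{prop:normality}. Projectivity is preserved by the base change $B\twoheadrightarrow B/BeB$, so $A/A\iota(e)A$ is projective as a right $B/BeB$-module, which proves that the induction functor $(A/A\iota(e)A)\otimes_{B/BeB}-$ is exact. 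I would then verify that this induction functor agrees with the composition along the green dotted arrows of \eqref{eq:dotted-diagram},
\[
F'\colon \Modu{B/BeB}\xrightarrow{i}\Modu{B}\xrightarrow{A\otimes_B-}\Modu{A}\xrightarrow{(A/A\iota(e)A)\otimes_A-}\Modu{A/A\iota(e)A},
\]
by associativity of the tensor product together with the observation that $(B/BeB)\otimes_B i(M) = M$ for every $M$ in $\Modu{B/BeB}$. For a simple module $L_\mathtt{i}^{B/BeB}$ with $\mathtt{i}\in Q_0\setminus Q_0'$, condition \eqref{borel-cond3} for the exact Borel pair $B\subseteq A$ together with Lemma \ref{lem:comp-first}(1) yields
\[
F'(L_\mathtt{i}^{B/BeB}) = (A/A\iota(e)A)\otimes_A \Delta_\mathtt{i}^A = \Delta_\mathtt{i}^{A/A\iota(e)A},
\]
establishing condition \eqref{borel-cond3} for $B/BeB\subseteq A/A\iota(e)A$. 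Condition \eqref{borel-cond4} is immediate because endomorphism rings of simple modules are unaffected by passing to quotient algebras annihilating them.

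The preservation of homologicality and regularity is then handled via $\infty$-homological embeddings. Under the compatibility hypotheses, both $BeB$ and $A\iota(e)A$ are $\infty$-idempotent ideals in $B$ and $A$ respectively, so the embeddings $\Modu{B/BeB}\hookrightarrow\Modu{B}$ and $\Modu{A/A\iota(e)A}\hookrightarrow\Modu{A}$ induce isomorphisms on all positive-degree $\Ext$-groups via \cite[Propositions 1.1, 1.2]{APT92}. Sandwiching these with the $\Ext$-comparison $\Ext^n_B(L_\mathtt{i}^B, L_\mathtt{j}^B) \xrightarrow{\sim} \Ext^n_A(\Delta_\mathtt{i}^A, \Delta_\mathtt{j}^A)$ supplied by regularity (resp.\ homologicality) of $B$ in $A$ gives the desired comparison for the quotients.

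The main technical obstacle is the identification $A/A\iota(e)A \cong A\otimes_B (B/BeB)$, which is not automatic and relies crucially on Lemma \ref{lem:injective}: it is precisely at this step that both halves of the compatibility hypothesis (for $e$ in $B$ and for $\iota(e)$ in $A$) enter simultaneously. Once this identification is in hand, the remainder of the proof runs in close parallel to the subalgebra case of Proposition \ref{prop:compatibility-subalgebra}.
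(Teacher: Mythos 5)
Your proof is correct and follows essentially the same strategy as the paper's, pivoting on the identity $A(BeB)=A\iota(e)A$ from Lemma \ref{lem:injective}. The one step where your argument genuinely differs is the proof that the induction functor $(A/A\iota(e)A)\otimes_{B/BeB}-$ is exact: you deduce that $A/A\iota(e)A$ is projective as a right $B/BeB$-module by base change from the projective right $B$-module $A$, whereas the paper instead verifies that every $A$-module of the form $A\otimes_B i(X)$ is annihilated by $A\iota(e)A$, so that the last tensor factor in the composed green functor is a no-op on the relevant image. Both arguments are of comparable difficulty and rely on the same lemma; your version is a bit more module-theoretic, the paper's a bit more functorial. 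Two small remarks. First, citing the \emph{statement} of Proposition \ref{prop:normality} as an alternative route to projectivity of $A$ as a right $B$-module is not quite enough: the statement only gives a split monomorphism $B\hookrightarrow\Rest'(A)$, which by itself does not make $\Rest'(A)$ projective; the \emph{proof} of that proposition (identifying $\Rest'(A)$ with a direct sum of indecomposable projective $B\Op$-modules via $\Rest'(\bar{\Delta}_\mathtt{i}^{A\Op})\cong P_\mathtt{i}^{B\Op}$) does establish it, but your primary flatness argument is cleaner and sufficient. Second, your explicit verification of condition \eqref{borel-cond4} is a minor completeness gain over the paper, which leaves it implicit.
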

\begin{proof}
	We prove the result for left standardly stratified algebras. By Lemma \ref{lem:comp-second}, $B/BeB$ is left standardly stratified with respect to $(Q_0 \setminus Q_0',\unlhd)$. Using Lemmas \ref{lem:comp-second} and \ref{lem:first}, we conclude in the same manner that $A/A\iota(e)A$ is left standardly stratified with respect to $(Q_0 \setminus Q_0',\unlhd)$. 
	
	By Lemma \ref{lem:comp-second}, the standard $B/BeB$-modules are given by $\Delta_\mathtt{i}^{B/BeB}=\Delta_\mathtt{i}^B=L_\mathtt{i}^B=L_\mathtt{i}^{B/BeB}$, $\mathtt{i}\in Q_0\setminus Q_0'$, so $B/BeB$ is still a left standardly stratified algebra with simple standard modules. Moreover, $\iota^|$ embeds $B/BeB$ into $A/A\iota(e)A$, as proved in Lemma \ref{lem:injective}.
	
	The monic $\iota^|$ turns every $A/A\iota(e)A$-module into a $B/BeB$-module by restriction of the action. We prove that the corresponding induction functor $(A/A\iota(e)A)\otimes_{B/BeB} -$ is exact and takes simples to the corresponding standards. For this, consider the following composition of functors (follow the green dots in \eqref{eq:dotted-diagram})
	\[	G\colon 
	\begin{tikzcd}[ column sep = huge]
	\Modu{B/BeB} \ar[r, "i"] & \Modu{B} \ar[r, "A\otimes_B-"] & \Modu{A}\ar[r, "A/A\iota(e)A\otimes_A-"] & \Modu{A/A\iota(e)A}
	\end{tikzcd}.\]
	We claim that $G$ is exact. Since the functors $i$ and $A\otimes_B-$ are exact, we just need to show that $A/A\iota(e)A\otimes_A-$ is exact on sequences in $\Modu{A}$ with modules in $\Ima{(A\otimes_B i(-))}$. It suffices to prove that all $A$-modules in $\Ima{(A\otimes_B i(-))}$ are actually $A/A\iota(e)A$-modules, i.e.~it is enough to show that $\iota(e) (A\otimes_B i(X))=0$ for every $X$ in $\Modu{B/BeB}$. Note that
	\[\iota(e) (A\otimes_B i(B/BeB))\cong\iota(e)(A/A(BeB))=\iota(e)(A/A\iota(e)A)=0,
	\]
	where the second equality follows from Lemma \ref{lem:injective}. Since every module in $\Modu{B/BeB}$ is generated by $B/BeB$ and the composition $A\otimes_B i(-)$ yields an exact functor which preserves all coproducts, we conclude that $\iota(e) (A\otimes_B i(X))=0$ for every $X$ in $\Modu{B/BeB}$. Hence $G$ is exact.
	
	We check that $G$ maps standards to standards. Fix $\mathtt{i}\in Q_0 \setminus Q_0'$. Note that $i(\Delta_\mathtt{i}^{B/BeB})=\Delta_\mathtt{i}^{B}$ and observe that $A\otimes_B \Delta_\mathtt{i}^{B}=\Delta_\mathtt{i}$ as $B$ is an exact Borel subalgebra. Finally, $A/A\iota(e)A\otimes_A \Delta_\mathtt{i} =\Delta_\mathtt{i}^{A/A\iota(e)A}$ by Lemma \ref{lem:comp-first}. Now, observe that
    \begin{align*}
    G&=A/A\iota(e)A \otimes_A A \otimes_B i(-) \cong A/A\iota(e)A \otimes_B i(-)\cong A\otimes_B B/BeB \otimes_B i(-) \\
    &\cong A\otimes_B B/BeB \otimes_{B/BeB}-\cong A/A\iota(e)A \otimes_{B/BeB}-.
  \end{align*}
    Hence, $B/BeB$ is an exact Borel subalgebra of $(A/A\iota(e)A,Q_0 \setminus Q_0', \unlhd)$.
	
	As $e$ is compatible with $(B,Q_0,\unlhd)$ and $\iota(e)$ is compatible with $(A,Q_0,\unlhd)$, both inclusion functors in \eqref{eq:dotted-diagram} are $\infty$-homological embeddings (see the end of Section \ref{sec:compatible-idempotents}) and they also preserve standard objects (see Lemma \ref{lem:comp-second}). In particular, for every $\mathtt{i}, \mathtt{j}\in Q_0\setminus Q_0'$ and $n\geq 1$, they give rise to isomorphisms
	\begin{gather*}
	\Ext_{B/BeB}^{n}(L_\mathtt{i}^{B/BeB},L_\mathtt{j}^{B/BeB})\longrightarrow \Ext_{B}^{n}(i(L_\mathtt{i}^{B/BeB}),i(L_\mathtt{j}^{B/BeB}))=\Ext_{B}^{n}(L_\mathtt{i}^{B},L_\mathtt{j}^{B}),\\
	\Ext_{A/A\iota(e)A}^{n}(\Delta_\mathtt{i}^{A/A\iota(e)A},\Delta_\mathtt{j}^{A/A\iota(e)A}) \longrightarrow \Ext_{A}^{n}(i(\Delta_\mathtt{i}^{A/A\iota(e)A}),i(\Delta_\mathtt{j}^{A/A\iota(e)A}))=\Ext_{A}^{n}(\Delta_\mathtt{i},\Delta_\mathtt{j}).
	\end{gather*}
	As a result, homologicallity and regularity are preserved.
\end{proof}

Up to this point, in this section we have started with an idempotent in the exact Borel subalgebra. In the following theorem, we show that when starting instead with an idempotent in the quasi-hereditary algebra, one can find a corresponding one in the exact Borel subalgebra.

\begin{thm}
\label{thm:last_firstpart}
    Let $B$ be an exact Borel subalgebra of a left (resp. right) standardly stratified algebra $(A, Q_0, \unlhd)$. Let $e\in A$ be an idempotent supported in some subset $Q_0'$ of $Q_0$ and assume that one of the following conditions hold:
    \begin{enumerate}
        \item\label{tag1} $Q_0'$ is a coideal of $(Q_0, \unlhd)$;
        \item\label{tag2} $B$ is such that $\Ext_B(L_\mathtt{i}^B,L_\mathtt{j}^B)\neq 0$ implies that $\Ext_B(A\otimes_B L_\mathtt{i}^B,A\otimes_B L_\mathtt{j}^B)\neq 0$ for every $\mathtt{i},\mathtt{j}\in Q_0$ and $e$ is compatible with the standardly stratified structure of $(A,Q_0,\unlhd)$.
   \end{enumerate}
    Then, any idempotent $e'\in B$ supported in $Q_0'$ is compatible with the standardly stratified structure of $(B,Q_0,\unlhd)$. In this case, the embedding of $B$ into $A$ induces an algebra monomorphism
    \[\iota_|:e'Be'\lhook\joinrel\longrightarrow  \iota(e')A\iota(e'){\sim} eAe\]
    that turns $e'Be'$ into an exact Borel subalgebra of the left (respectively right) standardly stratified algebra $(\iota(e')A\iota(e'), Q_0\setminus Q_0', \unlhd)$, which is equivalent to $(eAe,Q_0\setminus Q_0', \unlhd)$ in the sense of Definition \ref{defn:equivalent}, and also an algebra monomorphism 
    \[\iota^|:B/Be'B\lhook\joinrel\longrightarrow  A/A\iota(e')A=A/ AeA\]
    that turns $B/Be'B$ into an exact Borel subalgebra of the left (resp.~right) standardly stratified algebra $(A/A e A,  Q_0', \unlhd)$.  Moreover, $A(Be'B)=AeA=A\iota(e')A$. 
    Furthermore, if $B$ is homological then so are $e'Be'$ and $B/Be'B$, and the corresponding compatibility result also holds for $B$ regular.
\end{thm}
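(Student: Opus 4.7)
The key is to reduce everything to Propositions \ref{prop:compatibility-subalgebra} and \ref{prop:compatibility-quotient}, which require that $e'$ be compatible with $(B,Q_0,\unlhd)$ and that $\iota(e')$ be compatible with $(A,Q_0,\unlhd)$ in the sense of Definition \ref{defn:compatibility}.

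First I would prove that the support $Q_0'$ of $e'$ in $B$ is a coideal of the essential order $\unlhd_{\ess}$ of $B$, so that compatibility of $e'$ with $(B,Q_0,\unlhd)$ follows from Theorem \ref{thm:defofcompatibility}. Under assumption \eqref{tag1}, $Q_0'$ is already a coideal of the full order $\unlhd$, hence of any coarsening, including the essential order of $B$. Under assumption \eqref{tag2}, the $\Ext^1$-hypothesis on $B$ combined with Mazorchuk's characterisation of the essential order via $\Hom$ and $\Ext^1$ between standard modules (the argument used in the proof of Lemma \ref{lem:condition(5)}) shows that the essential order of $B$ is coarser than the essential order of $A$. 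Since compatibility of $e$ with $(A,Q_0,\unlhd)$ forces $Q_0'$ to be a coideal of the essential order of $A$ (Theorem \ref{thm:defofcompatibility}), the same $Q_0'$ is automatically a coideal of the coarser essential order of $B$.

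With $e'$ compatible, the $B$-module $D(B/Be'B)$ is filtered by proper costandard (equivalently, costandard) $B$-modules, so Lemma \ref{lem:first} forces the support of $\iota(e')$ in $A$ to be $Q_0'$. In Case \eqref{tag1} this is a coideal of $\unlhd$, and in Case \eqref{tag2} it is a coideal of $\unlhd_{\ess}$ for $A$; either way Theorem \ref{thm:defofcompatibility} gives compatibility of $\iota(e')$ with $(A,Q_0,\unlhd)$. Propositions \ref{prop:compatibility-subalgebra} and \ref{prop:compatibility-quotient} then apply and produce the required exact Borel subalgebra embeddings $\iota_|\colon e'Be' \hookrightarrow \iota(e')A\iota(e')$ and $\iota^|\colon B/Be'B \hookrightarrow A/A\iota(e')A$, together with the preservation of homologicality and regularity. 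The equality $A(Be'B)=A\iota(e')A$ is supplied by Lemma \ref{lem:injective}.

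To identify $A\iota(e')A$ with $AeA$ and compare $\iota(e')A\iota(e')$ with $eAe$, I would use that $e$ and $\iota(e')$ have the same support $Q_0'$ in $A$. The projective left $A$-modules $Ae$ and $A\iota(e')$ then have the same additive closure, so both $AeA$ and $A\iota(e')A$ coincide with the trace of $\bigoplus_{\mathtt{i}\in Q_0'} P_\mathtt{i}$ in $A$. Consequently $eAe$ and $\iota(e')A\iota(e')$ are Morita equivalent via the bimodule $eA\iota(e')$, and Lemma \ref{lem:comp-fourth'} shows this equivalence sends $\Delta_\mathtt{i}^{eAe}=e\Delta_\mathtt{i}$ to $\Delta_\mathtt{i}^{\iota(e')A\iota(e')}=\iota(e')\Delta_\mathtt{i}$ for each $\mathtt{i}\in Q_0'$; it therefore restricts to an equivalence of exact categories $\mathcal{F}(\Delta^{eAe}) \to \mathcal{F}(\Delta^{\iota(e')A\iota(e')})$ as required by Definition \ref{defn:equivalent}. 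The main obstacle I anticipate is the essential-order comparison in Case \eqref{tag2}; once it is established that \eqref{tag2} forces the essential order of $B$ to be coarser than that of $A$, so that coideals transfer, the rest of the argument is a careful assembly of the preceding Propositions and Lemmas.
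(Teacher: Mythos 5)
Your proposal follows essentially the same route as the paper's proof: in Case (1) you observe the coideal property directly, and in Case (2) you invoke Lemma \ref{lem:condition(5)} to show the essential order of $B$ is coarser than that of $A$, so that $Q_0'$ (a coideal of $\unlhd_{\ess}$ for $A$ by Theorem \ref{thm:defofcompatibility}) is also a coideal of the essential order of $B$; then Lemma \ref{lem:first} controls the support of $\iota(e')$, and Propositions \ref{prop:compatibility-subalgebra}, \ref{prop:compatibility-quotient} and Lemma \ref{lem:injective} do the rest. You are slightly more explicit than the paper about why $AeA=A\iota(e')A$ (same support, hence same trace ideal) and about the Morita equivalence between $eAe$ and $\iota(e')A\iota(e')$ restricting to an equivalence of exact categories of $\Delta$-filtered modules, but this is filling in consequences the paper leaves implicit rather than a different argument.
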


\begin{proof}
Suppose first that condition \eqref{tag1} in the statement of the theorem is satisfied.  As explained in Remark \ref{rmk:coideal}, it follows from Theorem \ref{thm:defofcompatibility} and Lemma \ref{lem:first} that the support of $\iota(e')\in A$ is $Q_0'$ for any idempotent $e'\in B$ supported in the coideal $Q_0'$. Furthermore, Proposition \ref{prop:compatibility-subalgebra}, Lemma \ref{lem:injective} and Proposition \ref{prop:compatibility-quotient} hold for the idempontent $e'\in B$. Hence, the result follows in this case. Suppose now that condition \eqref{tag2} holds. By Lemma \ref{lem:condition(5)}, the essential order associated to $(B,Q_0,\unlhd)$ is coarser than the essential order associated to $(A,Q_0,\unlhd)$. As $e\in A$ is compatible with the standardly stratified structure of $(A,Q_0,\unlhd)$, $Q_0'$ must be a coideal of the essential order associated to $(A,Q_0,\unlhd)$. So $Q_0'$ is also a coideal of the essential order associated to $(B,Q_0,\unlhd)$ and therefore $e'\in B$ is compatible with the standardly stratified structure of $(B,Q_0,\unlhd)$. As a consequence of Theorem \ref{thm:defofcompatibility} and Lemma \ref{lem:first}, the previous results (namely, Proposition \ref{prop:compatibility-subalgebra}, Lemma \ref{lem:injective} and Proposition \ref{prop:compatibility-quotient}) may be applied in this situation.
\end{proof}

One may wonder whether the following converse of Theorem \ref{thm:last_firstpart} holds. If $e$ is an idempotent compatible with the standardly stratified structure of an algebra $(A,Q_0, \unlhd)$ and both $A/AeA$ and $eAe$ have an exact Borel subalgebra, does this imply that $A$ also has an exact Borel subalgebra? Such tempting conjecture fails dramatically. The next example provides a negative answer to the question.

\begin{ex}
    Consider the basic algebra $A$ in Example \ref{ex:contraexample_steffen} and the idempotent $e=e_{\mathtt 3}+e_{\mathtt 4}\in A$. Note that $e$ is supported in a coideal of $(\{\mathtt{1},\mathtt{2},\mathtt{3},\mathtt{4}\}, \leq)$, so it is compatible with the quasi-hereditary structure of $A$. Both quasi-hereditary algebras $A/AeA$ and $eAe$ contain (regular) exact Borel subalgebras, since both only have two isomorphism classes of simple modules (see \cite[Theorem 4.58]{Kul17} and also \cite[Example 4.20]{Conde2021b}). However, $(A,\{\mathtt{1},\mathtt{2},\mathtt{3},\mathtt{4}\}, \leq)$ does not contain any exact Borel subalgebra (see \cite[Example 2.3]{Koe95}).
\end{ex}

An equivalent version of that same example also provides an answer to the question whether the Morita equivalence in Theorem \ref{thm:last_firstpart} can be chosen to be an isomorphism:

\begin{ex}
  Let $A$ be as before and consider the equivalent algebra $A'=\End_{A}(A\oplus P_\mathtt{3})^{\op}$. According to \cite[Appendix A.3]{KKO14}, this algebra has an exact Borel subalgebra $B$ in contrast to its basic representative. Thus taking an idempotent $e\in A'$ such that $eA'e\cong A$, there won't exist $e'\in B$ such that $e'Be'$ is an exact Borel subalgebra of $eA'e\cong A$, since $A$ does not have an exact Borel subalgebra. 
\end{ex}

\section{Decomposition multiplicities}
\label{sec:decomposition-multiplicities}

Suppose within this subsection that the underlying field $\Bbbk$ is algebraically closed. Let $[(A,Q_0,\unlhd)]$ be the class of all quasi-hereditary algebras equivalent to a quasi-hereditary algebra $(A,Q_0,\unlhd)$ (recall Defintion \ref{defn:equivalent}).

\begin{rmk}
Note that the equivalence class $[(A,Q_0,\unlhd)]$ does not depend on whether we regard $A$ as a left standardly stratified algebra or as a right standardly stratified algebra. This is because for quasi-hereditary algebras, the standard and the proper standard modules coincide. Alternatively, one could regard both left and right standardly stratified algebras with the corresponding classes of standard, respectively proper standard modules, at the same time (e.g. in the context of mixed standardly stratified algebras mentioned before) and adapt Definition \ref{defn:equivalent} to this context. If $\Delta_\mathtt{i}\neq \bar{\Delta}_\mathtt{i}$, then the two can be distinguished since only the latter has $\Bbbk$ as endomorphism ring.  
\end{rmk}

A necessary and sufficient criterion for a quasi-hereditary algebra to have a regular exact Borel subalgebra is given in \cite[Theorem 4.18]{Conde2021b}. Namely, a quasi-hereditary algebra $(A, Q_0, \unlhd)$ contains a regular exact Borel subalgebra if and only if the linear system $V_{[(A, Q_0, \unlhd)]}x=(\dim L^A_\mathtt{i})_{\mathtt{i}\in Q_0}$ has a solution consisting of positive integers. The matrix $V_{[(A, Q_0, \unlhd)]}$ in the linear system is invertible with non-negative integral entries and, up to simultaneous permutation of rows and columns, it is lower triangular with ones on the diagonal and zeros on the lower diagonal. Note that $V_{[(A, Q_0, \unlhd)]}$ can be used to determine all the quasi-hereditary algebras equivalent to $(A, Q_0, \unlhd)$ which have a regular exact Borel subalgebra and also to compute the Cartan matrix of regular exact Borel subalgebras of quasi-hereditary algebras equivalent to $(A, Q_0, \unlhd)$ (see \cite[Theorems 4.16, 4.23]{Conde2021b}). 

The matrix $V_{[(A, Q_0, \unlhd)]}$ can be computed by an inductive procedure. The rows and columns of $V_{[(A, Q_0, \unlhd)]}$ are parametrised by $Q_0$, i.e.~$V_{[(A, Q_0, \unlhd)]}=(v_{\mathtt{i} \mathtt{j}})_{\mathtt{i},\mathtt{j}\in Q_0}$ where $v_{\mathtt{i} \mathtt{j}}$ is the $\mathtt{j}$th coordinate of an element $v_{\mathtt{i}}$ of free $\Z$-module ${\Z}^{Q_0}$ on $Q_0$. The sequence of elements $(v_{\mathtt{i}})_{\mathtt{i}\in Q_0}$ is defined recursively through the identity
\begin{equation}
\label{eq:recursivevector}
v_{\mathtt{i}}={\epsilon}_{\mathtt{i}}+\sum_{\substack{\mathtt{j},\mathtt{k}\in Q_0\\ \mathtt{k}\unlhd \mathtt{j} \lhd \mathtt{i}}}[\nabla_{\mathtt{j}}:L_{\mathtt{k}}] \dim( \Hom_{A}(\Delta_{\mathtt{j}},\Delta_{\mathtt{i}}) )v_{\mathtt{k}}  - \sum_{\substack{\mathtt{j}\in Q_0 \\ \mathtt{j} \lhd \mathtt{i}}}[\Delta_{\mathtt{i}}:L_{\mathtt{j}}]v_{\mathtt{j}} 
\end{equation}
where $\{{\epsilon}_{\mathtt{i}} \mid \mathtt{i}\in Q_0\}$ constitutes the standard basis of ${\Z}^{Q_0}$. Since the input data for this algorithm only depends on the equivalence class $[(A, Q_0, \unlhd)]$ of $(A, Q_0, \unlhd)$, then so does the matrix $V_{[(A, Q_0, \unlhd)]}$. The entry $(\mathtt{i},\mathtt{j})$ of $V_{[(A, Q_0, \unlhd)]}$ turns out to coincide with $[\Rest(L_\mathtt{i}^{A'}):L_\mathtt{j}^B]$, where $(A',Q_0,\unlhd)$ is any quasi-hereditary algebra equivalent to $(A, Q_0,\unlhd)$ in the sense of Definition \ref{defn:equivalent}  that has a regular exact Borel subalgebra $B$ (\cite[Theorem 4.6]{Conde2021b}). Here, the labelling poset for $A'$ is chosen to coincide with that of $A$ in such a way that the equivalence $\mathcal{F}(\Delta^A) \to \mathcal{F}(\Delta^{A'})$ of exact categories sends $\Delta_{\mathtt{i}}^A$ to $\Delta_{\mathtt{i}}^{A'}$ for all $\mathtt{i}\in Q_0$.

\begin{prop}
\label{prop:compatibility-matrix}
    Let $(A,Q_0, \unlhd)$ be a quasi-hereditary algebra and suppose that the underlying field $\Bbbk$ is algebraically closed. Let $e\in A$ be an idempotent supported in $Q_0'$ which is compatible with the quasi-hereditary structure of $(A,Q_0, \unlhd)$ and consider associated the matrix $V_{[(A, Q_0, \unlhd)]}=(v_{\mathtt{i}\mathtt{j}})_{\mathtt{i},\mathtt{j}\in Q_0}$. Then
    \[V_{[(A, Q_0, \unlhd)]}=
\begin{pmatrix}
    V_{[(A/AeA, Q_0\setminus Q_0', \unlhd)]} & 0 \\
    * & V_{[(eAe, Q_0', \unlhd)]}
\end{pmatrix}.
\]
\end{prop}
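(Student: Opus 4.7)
The plan is to verify the asserted block decomposition column by column, inducting on $\mathtt{i}\in Q_0$ along any linear refinement of the partial order $\unlhd$ and using the recursion \eqref{eq:recursivevector} for $v_\mathtt{i}$. The compatibility hypothesis, via Theorem \ref{thm:defofcompatibility}, ensures that $Q_0'$ is a coideal of the essential order $\unlhd_{\ess}$ and that both $(A/AeA,Q_0\setminus Q_0',\unlhd)$ and $(eAe,Q_0',\unlhd)$ are quasi-hereditary. Three statements must be established: $(v_\mathtt{i})|_{Q_0'}=0$ and $(v_\mathtt{i})|_{Q_0\setminus Q_0'}=v_\mathtt{i}^{A/AeA}$ for $\mathtt{i}\in Q_0\setminus Q_0'$; and $(v_\mathtt{i})|_{Q_0'}=v_\mathtt{i}^{eAe}$ for $\mathtt{i}\in Q_0'$.

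For $\mathtt{i}\in Q_0\setminus Q_0'$, I first argue that every nonzero summand of \eqref{eq:recursivevector} has $\mathtt{j}\notin Q_0'$. Indeed, the constraint $[\Delta_\mathtt{i}:L_\mathtt{j}]\neq 0$, which appears in the second sum and is implied by $\dim\Hom_A(\Delta_\mathtt{j},\Delta_\mathtt{i})\neq 0$ in the first, together with $\mathtt{j}\in Q_0'$ would force $\mathtt{i}\in Q_0'$ by Lemma \ref{lem:comp-third}, contradicting $\mathtt{i}\in Q_0\setminus Q_0'$. In the first sum, $\mathtt{j}\in Q_0\setminus Q_0'$ combined with $[\nabla_\mathtt{j}:L_\mathtt{k}]\neq 0$ similarly rules out $\mathtt{k}\in Q_0'$ by Lemma \ref{lem:comp-third'} (using $\nabla=\bar{\nabla}$, as $A$ is quasi-hereditary). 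Since the inductive hypothesis gives $(v_\mathtt{k})|_{Q_0'}=0$ for the surviving $\mathtt{k}\in Q_0\setminus Q_0'$, the claim $(v_\mathtt{i})|_{Q_0'}=0$ follows. For $(v_\mathtt{i})|_{Q_0\setminus Q_0'}=v_\mathtt{i}^{A/AeA}$, the exact fully faithful embedding $i\colon\Modu{A/AeA}\hookrightarrow\Modu{A}$ satisfies $\Delta_\mathtt{i}=i(\Delta_\mathtt{i}^{A/AeA})$ and $\nabla_\mathtt{i}=i(\nabla_\mathtt{i}^{A/AeA})$ for all $\mathtt{i}\in Q_0\setminus Q_0'$ by Lemmas \ref{lem:comp-second} and \ref{lem:comp-second'}, so the composition multiplicities and Hom dimensions appearing in \eqref{eq:recursivevector} agree with their $A/AeA$-counterparts. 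The projection of \eqref{eq:recursivevector} onto the $Q_0\setminus Q_0'$ coordinates is then precisely the recursion defining $v_\mathtt{i}^{A/AeA}$.

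For $\mathtt{i}\in Q_0'$, the previous paragraph already yields $(v_\mathtt{j})|_{Q_0'}=0$ for every $\mathtt{j}\in Q_0\setminus Q_0'$, so in the projection of \eqref{eq:recursivevector} onto the $Q_0'$-coordinates only summands with $\mathtt{j},\mathtt{k}\in Q_0'$ survive (in the first sum, Lemma \ref{lem:comp-third'} also rules out mixed choices). For such indices, Lemmas \ref{lem:comp-fourth} and \ref{lem:comp-fourth'} give $\Delta_\mathtt{i}^{eAe}=e\Delta_\mathtt{i}$ and $\nabla_\mathtt{i}^{eAe}=e\nabla_\mathtt{i}$; Lemma \ref{lem:comp-first} identifies the composition multiplicities $[\Delta_\mathtt{i}:L_\mathtt{j}]=[\Delta_\mathtt{i}^{eAe}:L_\mathtt{j}^{eAe}]$ and $[\nabla_\mathtt{j}:L_\mathtt{k}]=[\nabla_\mathtt{j}^{eAe}:L_\mathtt{k}^{eAe}]$; and Corollary \ref{cor:equivalence-eAe} supplies $\dim\Hom_A(\Delta_\mathtt{j},\Delta_\mathtt{i})=\dim\Hom_{eAe}(\Delta_\mathtt{j}^{eAe},\Delta_\mathtt{i}^{eAe})$ through the equivalence of exact categories $e(-)\colon\mathcal{F}(\{\Delta_\mathtt{i}\mid \mathtt{i}\in Q_0'\})\to\mathcal{F}(\Delta^{eAe})$. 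The $Q_0'$-projection of \eqref{eq:recursivevector} is thus the recursion defining $v_\mathtt{i}^{eAe}$, and induction closes the argument.

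The only genuinely delicate step is the index control in the middle paragraph: this is what fully exploits compatibility, combining both the $\Delta$-filtration of $A/AeA$ (through Lemma \ref{lem:comp-third}) and the $\bar{\nabla}$-filtration of $D(A/AeA)$ (through Lemma \ref{lem:comp-third'}). Once these support restrictions are in place, the remaining matching of the two recursions is a routine consequence of the structural lemmas from Section \ref{sec:compatible-idempotents}.
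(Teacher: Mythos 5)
Your proof is correct, and it takes a genuinely different route from the one the paper actually writes down. You carry out a direct inductive verification of the recursive formula \eqref{eq:recursivevector}, showing that when $\mathtt{i}\in Q_0\setminus Q_0'$ all contributing indices stay in $Q_0\setminus Q_0'$ (so that the $Q_0'$-coordinates of $v_\mathtt{i}$ vanish, forcing the upper-right zero block and matching the $A/AeA$-recursion on the remaining coordinates) and, using that vanishing, that the $Q_0'$-projection of the recursion for $\mathtt{i}\in Q_0'$ reproduces the $eAe$-recursion. This is exactly the direct approach that the paper's own remark following the proposition acknowledges as possible but declines to elaborate; the paper instead proves the result by first passing, up to equivalence, to a representative with a regular exact Borel subalgebra (using \cite{KKO14}), invoking Theorem \ref{thm:last_firstpart} to produce a compatible idempotent $e'$ in the Borel, and then reading off every entry $v_{\mathtt{i}\mathtt{j}}$ as a restricted composition multiplicity $[\Rest(L_\mathtt{i}):L_\mathtt{j}^B]$. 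Your version is more elementary and self-contained---it never needs the existence of exact Borel subalgebras or Theorem \ref{thm:last_firstpart}, only the combinatorial recursion and the structural Lemmas of Section \ref{sec:compatible-idempotents}---at the cost of more bookkeeping in the index control. The paper's version is shorter given its machinery, and it makes explicit the link between the block structure of $V$ and the decomposition of the exact Borel subalgebra itself, which aligns with the theme of Section \ref{sec:exact-borels-compatible}. One small point worth flagging in your write-up: the implication ``$\dim\Hom_A(\Delta_\mathtt{j},\Delta_\mathtt{i})\neq 0 \Rightarrow [\Delta_\mathtt{i}:L_\mathtt{j}]\neq 0$'' that you use to control the first sum is correct but deserves a one-line justification (a nonzero map $\Delta_\mathtt{j}\to\Delta_\mathtt{i}$ has nonzero image, which is a quotient of $\Delta_\mathtt{j}$ and hence has simple top $L_\mathtt{j}$, so $L_\mathtt{j}$ is a composition factor of $\Delta_\mathtt{i}$); you also implicitly use that $A/AeA$ and $eAe$ are quasi-hereditary (not merely standardly stratified), which follows from Lemmas \ref{lem:comp-first}, \ref{lem:comp-second}, \ref{lem:comp-second'}, \ref{lem:comp-fourth} and the quasi-heredity of $A$, but is worth stating explicitly since you invoke $\nabla=\bar{\nabla}$ for all three algebras.
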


\begin{rmk}
    Note that Proposition \ref{prop:compatibility-matrix} can be proved directly via the recursive formula \eqref{eq:recursivevector}. Using this method, it is not hard to check that the upper left corner of $V_{[(A, Q_0, \unlhd)]}$ indeed coincides with $V_{[(A/AeA, Q_0\setminus Q_0', \unlhd)]}$ (to verify this, recall Lemmas \ref{lem:comp-second} and \ref{lem:comp-second'}). Showing that the right lower corner of $V_{[(A, Q_0, \unlhd)]}$ is $V_{[(eAe, Q_0', \unlhd)]}$ follows essentially from Corollary \ref{cor:equivalence-eAe}. 
     Here we prove Proposition \ref{prop:compatibility-matrix} using a different strategy, namely by resorting to Theorem \ref{thm:last_firstpart} and to the fact that the entries of $V_{[(A, Q_0, \unlhd)]}$ record the composition multiplicities of restricted modules.
\end{rmk}

\begin{proof}[Proof of Proposition \ref{prop:compatibility-matrix}]
    Recall that the results in \cite{KKO14} guarantee the existence of regular exact Borel subalgebras for quasi-hereditary algebras, up to equivalence. Using  Lemma \ref{lem:comp-second}, Corollary \ref{cor:equivalence-eAe} and the fact that the matrices $V_{[(A,Q_0,\unlhd)]}$, $V_{[(A/AeA,Q_0\setminus Q_0',\unlhd)]}$ and $V_{[(eAe, Q_0',\unlhd)]}$ are invariants of the equivalence classes of $(A,Q_0,\unlhd)$, $(A/AeA,Q_0\setminus Q_0',\unlhd)$ and $(eAe, Q_0',\unlhd)$, respectively, we may suppose, without loss of generality, that $(A,Q_0, \unlhd)$ contains a regular exact Borel subalgebra $B$ and that $e\in A$  is compatible with the quasi-hereditary structure of $A$ and has support $Q_0'$. By Theorem \ref{thm:last_firstpart}, there exists an idempotent $e' \in B$, supported in $Q_0'$ and compatible with the quasi-hereditary structure of $(B,Q_0, \unlhd)$, so that $\iota(e')\in A$ is also supported in $Q_0'$ and $B/B e' B$ and $e' B e'$ are regular exact Borel subalgebras of the quasi-hereditary algebras $(A/A\iota(e')A, Q_0 \setminus Q_0', \unlhd)$ and $(\iota(e')A\iota(e'), Q_0', \unlhd)$, respectively. Note that the quasi-hereditary algebra $(eAe,Q_0',\unlhd)$ is equivalent to $(\iota(e')A\iota(e'), Q_0', \unlhd)$ and $A/A\iota(e')A=A/AeA$. Denote the restriction functors by $\Rest\colon \Modu{A} \to \Modu{B}$, $\Rest^|\colon \Modu{A/A \iota(e') A} \to \Modu{B/B e' B}$ and $\Rest_ |\colon \Modu{\iota(e')A\iota(e')} \to \Modu{ e' Be'}$, respectively. Let $e_{\mathtt j}\in B$ be some idempotent such that $Be_{\mathtt j} \cong P_{\mathtt j}^B$. Recall that 
    \[
    v_{\mathtt i \mathtt j}=[\Rest(L_{\mathtt i}):L_{\mathtt j}^B]=\dfrac{\dim(\Hom_B(B e_{\mathtt j}, \Rest (L_{\mathtt i})))}{\dim(\Hom_B(B e_{\mathtt j}, L_{\mathtt j}^B))}=\dfrac{\dim(\Hom_A(A \iota(e_{\mathtt j}), L_{\mathtt i}))}{\dim(\Hom_B(B e_{\mathtt j}, L_{\mathtt j}^B))}=\dfrac{\dim(\iota(e_{\mathtt j})L_{\mathtt i})}{\dim(e_{\mathtt j}L_{\mathtt j}^B)}.
    \]
    For $\mathtt i, \mathtt j \in Q_0 \setminus Q_0'$ we have that $\iota(e_{\mathtt j})L_{\mathtt i}=\iota^|(e_{\mathtt j}+Be' B)L_{\mathtt i}^{A/A\iota (e') A}$ and $e_{\mathtt j}L_{\mathtt j}^B=(e_{\mathtt j}+Be' B)L_{\mathtt j}^{B/Be' B}$, therefore $v_{\mathtt i \mathtt j}=[\Rest^|(L_{\mathtt i}^{A/A\iota(e')A}):L_{\mathtt j}^{B/Be' B}]$ and consequently $v_{\mathtt i \mathtt j}$ coincides with the entry $(\mathtt{i},\mathtt{j})$ of $V_{[(A/AeA, Q_0\setminus Q_0', \unlhd)]}$. For $\mathtt{j} \in Q_0'$ we may suppose without loss of generality (otherwise replace $e_\mathtt{j}$ by $e' e_\mathtt{j}e'$) that the idempotent $e_{\mathtt j}\in B$ satisfies $e' e_{\mathtt j}e'= e_{\mathtt j}$, so that $e_{\mathtt j}\in e' B e'$. Under this assumption, the identities $\iota(e_{\mathtt j})L_{\mathtt i}=\iota(e_{\mathtt j})\iota(e') L_{\mathtt i}=\iota_|(e_{\mathtt j})L_{\mathtt i}^{\iota(e') A\iota(e')}$ and $e_{\mathtt j}L_{\mathtt j}^B=e_{\mathtt j}e' L_{\mathtt j}^{B}=e_{\mathtt j}L_{\mathtt j}^{e' Be'}$ hold for $\mathtt i, \mathtt j \in Q_0'$, hence we have $v_{\mathtt i \mathtt j}=[\Rest_|(L_{\mathtt i}^{\iota(e')A\iota(e')}):L_{\mathtt j}^{e' B e'}]$ and so $v_{\mathtt i \mathtt j}$ coincides with the entry $(\mathtt{i},\mathtt{j})$ of $V_{[(eAe, Q_0\setminus Q_0', \unlhd)]}$.
\end{proof}

According to \cite[Theorem A (4)]{Conde2021} (see also \cite[Corollary 3.25]{KM23} and \cite{RR25}), for each equivalence class of quasi-hereditary algebras, there is a unique representative having a basic regular exact Borel subalgebra. 

\begin{prop}
\label{prop:last}
Let $(A,Q_0,\unlhd)$ be a quasi-hereditary algebra and suppose that the underlying field $\Bbbk$ is algebraically closed. Let the multiplicities $\ell_\mathtt{i}^A$ for $\mathtt{i}\in Q_0$ be defined in such a way that
\[R=\End_A\left(\bigoplus_{\mathtt{i}\in Q_0}P_\mathtt{i}^{\oplus\ell_{\mathtt{i}}^A}\right)^{\op}\]
is the unique representative of $[(A,Q_0,\unlhd)]$ having a basic regular exact Borel subalgebra. Let $e\in A$ be an idempotent supported in $Q_0'$, which is compatible with the quasi-hereditary structure of $(A,Q_0,\unlhd)$. Then, $\ell_\mathtt{i}^{eAe}=\ell_\mathtt{i}^A$ for $\mathtt{i}\in Q_0'$ and $\ell_\mathtt{i}^A\geq \ell_\mathtt{i}^{A/AeA}$ for $\mathtt{i}\in Q_0\setminus Q_0'$.
\end{prop}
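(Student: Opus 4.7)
The plan is to invoke Proposition \ref{prop:compatibility-matrix} to reduce $V_{[(A, Q_0, \unlhd)]}$ to a block lower triangular shape, and then to reinterpret each of $\ell_\mathtt{i}^A$, $\ell_\mathtt{i}^{A/AeA}$ and $\ell_\mathtt{i}^{eAe}$ as a row sum of the corresponding matrix $V$, so that the two comparisons in the statement follow by inspecting the blocks.

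To establish the identity $\ell_\mathtt{i}^A = \sum_{\mathtt{j}\in Q_0} v_{\mathtt{i}\mathtt{j}}$, I would combine two ingredients. By \cite[Theorem 4.6]{Conde2021b}, the entry $v_{\mathtt{i}\mathtt{j}}$ equals $[\Rest(L_\mathtt{i}^R):L_\mathtt{j}^B]$, where $R$ is the basic regular exact Borel representative of $[(A, Q_0, \unlhd)]$ and $B$ its basic regular exact Borel subalgebra; since $B$ is basic over the algebraically closed field $\Bbbk$, each $L_\mathtt{j}^B$ is one-dimensional, so $\sum_\mathtt{j} v_{\mathtt{i}\mathtt{j}} = \dim_\Bbbk \Rest(L_\mathtt{i}^R) = \dim_\Bbbk L_\mathtt{i}^R$. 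On the other hand, after harmlessly replacing $A$ by its basic Morita representative, the primitive idempotent decomposition of $R = \End_A(\bigoplus_\mathtt{i} P_\mathtt{i}^{\oplus \ell_\mathtt{i}^A})\Op$ has exactly $\ell_\mathtt{i}^A$ primitives of label $\mathtt{i}$, whence $\dim_\Bbbk L_\mathtt{i}^R = \ell_\mathtt{i}^A$. Theorem \ref{thm:last_firstpart} then supplies basic regular exact Borel subalgebras $B/Be'B$ inside $R/R\iota(e')R$ and $e'Be'$ inside $\iota(e')R\iota(e')$ (for an appropriate $e'\in B$), so the equivalence classes $[(A/AeA, Q_0 \setminus Q_0', \unlhd)]$ and $[(eAe, Q_0', \unlhd)]$ each possess a basic regular exact Borel representative; the analogous row-sum identifications then give $\ell_\mathtt{i}^{A/AeA}$ as the row sum of $V_{[(A/AeA, Q_0 \setminus Q_0', \unlhd)]}$ and $\ell_\mathtt{i}^{eAe}$ as the row sum of $V_{[(eAe, Q_0', \unlhd)]}$.

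With these identifications in hand, the block decomposition of Proposition \ref{prop:compatibility-matrix} yields both comparisons directly. Summing along row $\mathtt{i}$ of $V_{[(A, Q_0, \unlhd)]}$: when the off-diagonal entries in row $\mathtt{i}$ lie in the zero top-right block, the row sum coincides with that of the diagonal block, so the corresponding multiplicity equation is an equality; when the off-diagonal entries lie in the non-negative bottom-left block, the row sum equals that of the diagonal block plus the non-negative row sum of the off-diagonal block, giving the inequality in the statement. The main technical hurdle is the identity $\ell_\mathtt{i}^A = \sum_\mathtt{j} v_{\mathtt{i}\mathtt{j}}$ together with its transfer to the quotient and corner algebras via Theorem \ref{thm:last_firstpart}; once established, the remainder is a straightforward bookkeeping on the $2\times 2$ block matrix, with the non-negativity of the entries of $V$ (recalled in the excerpt) providing the final step.
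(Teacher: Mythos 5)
Your proof is correct and follows essentially the same strategy as the paper's: identify $\ell_\mathtt{i}^A$ with the $\mathtt{i}$th row sum of $V_{[(A,Q_0,\unlhd)]}$ and then read off the claims from the block lower-triangular shape supplied by Proposition \ref{prop:compatibility-matrix}. The only difference is that the paper cites \cite[Theorem B]{Conde2021} directly for the identity $\ell_\mathtt{i}^A=\dim L_\mathtt{i}^R=\sum_\mathtt{j}v_{\mathtt{i}\mathtt{j}}$, whereas you re-derive it from \cite[Theorem 4.6]{Conde2021b} together with the observation that the simple $B$-modules are one-dimensional and $\dim L_\mathtt{i}^R = \ell_\mathtt{i}^A$; this is a harmless elaboration, not a different route.
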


\begin{proof}
According to \cite[Theorem B]{Conde2021}, the multiplicities $\ell_i^A$ are given by 
\[\ell_\mathtt{i}=\dim L_\mathtt{i}^R=\sum_{\mathtt{j}\in Q_0}[\Rest(L_{\mathtt{i}}^R):L_{\mathtt{j}}^B]=\sum_{\mathtt{j}\in Q_0}v_{\mathtt{i}\mathtt{j}}\]
where $B$ is the basic exact Borel subalgebra of $R$ and $V=(v_{\mathtt{i}\mathtt{j}})$ is the matrix in Proposition \ref{prop:compatibility-matrix}. Given the shape of $V$ therein, the claim follows. 
\end{proof}

We conclude our paper by giving a concrete instance to which the proposition is applicable, namely principal blocks of BGG category $\mathcal{O}$. For the definitions and the basic theory of BGG category $\mathcal{O}$ we refer the reader to \cite{Dix96, Hum08}.

\begin{cor}
\label{cor:last}
Let $\mathfrak{g}$ be a finite-dimensional semisimple complex Lie algebra. Let $A$ be the quasi-hereditary algebra corresponding to the principal block of BGG category $\mathcal{O}$ associated to $\mathfrak{g}$ which has a basic regular exact Borel subalgebra. 
\begin{enumerate}[(i)]
\item Then $A$ is basic if and only if $\mathfrak{g}=\mathfrak{sl}_2$.
\item Otherwise, there exists an indecomposable projective whose multiplicity in $A$ is at least $3$. If $\mathfrak{g}$ is simple, there even exists an indecomposable projective whose multiplicity is at least $9$. 
\end{enumerate}
\end{cor}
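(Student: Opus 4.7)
The plan is to extract a locality principle from the recursion \eqref{eq:recursivevector}: by induction on $\mathtt{i}$ with respect to the partial order, the vector $v_\mathtt{i}$ depends only on the composition multiplicities $[\Delta_\mathtt{j}:L_\mathtt{k}]$, $[\nabla_\mathtt{j}:L_\mathtt{k}]$ and the Hom dimensions $\dim\Hom(\Delta_\mathtt{j},\Delta_\mathtt{i})$ for indices $\mathtt{j},\mathtt{k}$ in the downward-closed interval below $\mathtt{i}$. For the principal block of BGG category $\mathcal{O}(\mathfrak{g})$, these data are Kazhdan--Lusztig numbers attached to subintervals of the Bruhat order on $W(\mathfrak{g})$, and by the invariance of Kazhdan--Lusztig polynomials under parabolic subgroups of Weyl groups they depend only on the Coxeter subsystem containing the interval. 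Consequently, given two distinct simple reflections $s,t\in W(\mathfrak{g})$, the vector $v_{sts}^A$ (or $v_{st}^A$, when $s,t$ commute) computed in $\mathcal{O}_0(\mathfrak{g})$ coincides with the corresponding vector computed in the principal block of $\mathcal{O}(\mathfrak{g}_J)$, where $\mathfrak{g}_J$ is a reductive Lie algebra with Weyl group $\langle s,t\rangle$.

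With the locality principle in hand, the corollary reduces to three explicit computations from \eqref{eq:recursivevector}. For $\mathcal{O}_0(\mathfrak{sl}_2)$ the recursion trivially yields $V=I$. For $\mathcal{O}_0(\mathfrak{sl}_2\times\mathfrak{sl}_2)$, with simples labelled by $\{1,s,t,st\}$ and Bruhat order $\mathbb{Z}/2\times\mathbb{Z}/2$, the only nontrivial vector is
\[ v_{st}=\epsilon_{st}+2\epsilon_{1}, \]
with row sum $3$. For $\mathcal{O}_0(\mathfrak{sl}_3)$ (simples labelled by $S_3$, all Kazhdan--Lusztig polynomials equal to $1$), a length-by-length application of \eqref{eq:recursivevector} gives $v_s=\epsilon_s$, $v_t=\epsilon_t$, $v_{st}=\epsilon_{st}+2\epsilon_{1}$, $v_{ts}=\epsilon_{ts}+2\epsilon_{1}$, and finally
\[ v_{sts}=\epsilon_{sts}+4\epsilon_{1}+2\epsilon_s+2\epsilon_t, \]
with row sum $4+2+2+0+0+1=9$.

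Both parts of the corollary now follow by case analysis. If $\mathfrak{g}$ is semisimple with at least two simple summands, choosing $s,t$ from two distinct simple factors yields commuting generators with $\langle s,t\rangle\cong\mathbb{Z}/2\times\mathbb{Z}/2$, and the locality principle forces $\ell_{st}^A=3$. If $\mathfrak{g}$ is simple of rank $\geq 2$, choosing two adjacent simple reflections $s,t$ makes $\langle s,t\rangle$ dihedral of order $\geq 6$, and the Bruhat interval $[1,sts]$ in $W(\mathfrak{g})$ is isomorphic, as a Kazhdan--Lusztig-labelled poset, to the analogous interval in $W(A_2)=S_3$; hence the locality principle gives $\ell_{sts}^A=9$. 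This proves (ii), and together with the $\mathfrak{sl}_2$ computation it also yields (i), since any row sum of $V$ strictly greater than $1$ forces $A$ to be non-basic. The main obstacle I foresee is a clean formulation and proof of the locality principle and of the Kazhdan--Lusztig invariance being invoked, after which the rank $\leq 2$ recursions are purely mechanical.
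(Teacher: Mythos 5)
Your plan is correct and lands on the same rank-$2$ computations as the paper, but the reduction mechanism is genuinely different. The paper's proof is structural: \cite[Section 6.2]{CMZ19} provides a Serre subquotient of $\mathcal{O}_0(\mathfrak{g})$ equivalent to $\mathcal{O}_0(\mathfrak{l})\simeq\mathcal{O}_0(\mathfrak{s})$ with $\mathfrak{s}$ of rank $2$, whose simple labels form a coideal in the Bruhat order, and the proposition immediately preceding the corollary then transfers the rank-$2$ bounds on $\ell_\mathtt{i}$ directly to $A$. You instead extract a combinatorial ``locality principle'' from the recursion \eqref{eq:recursivevector} (which is in effect a special case of Proposition \ref{prop:compatibility-matrix}) and argue that the input data on the Bruhat ideal $W_J=\langle s,t\rangle$ agree with those of $\mathcal{O}_0(\mathfrak{g}_J)$ via Kazhdan--Lusztig combinatorics. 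Two points deserve care when you close the plan. In the parametrization of $\mathcal{O}_0$ under which $W_J$ sits inside $Q_0$ as a Bruhat order ideal, the Verma multiplicities take the form $[\Delta_w:L_x]=P_{w_0 x w_0,\,w_0 w w_0}(1)$ rather than $P_{x,w}(1)$; to reduce to polynomials supported on $W_J$ one needs the invariance of $P$ under the diagram automorphism $w\mapsto w_0 w w_0$ in addition to the parabolic compatibility $P^W_{x,y}=P^{W_J}_{x,y}$ for $x,y\in W_J$. Moreover, the entries $\dim\Hom_A(\Delta_\mathtt{j},\Delta_\mathtt{i})$ in \eqref{eq:recursivevector} are not Kazhdan--Lusztig numbers but $0/1$-valued by the BGG--Verma embedding theorem, encoding the Bruhat order; their compatibility with $W_J$ is elementary but should be stated separately. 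Both facts are classical, so your plan goes through; the paper avoids them altogether because the Serre-subquotient equivalence (Lemma \ref{lem:comp-fourth'}, Corollary \ref{cor:equivalence-eAe}) matches all categorically defined multiplicities with no Kazhdan--Lusztig input. Your remark that $[e,sts]$ in any dihedral parabolic of order $\geq 6$ is a fixed six-element poset with trivial Kazhdan--Lusztig labelling neatly unifies the $\mathbb{A}_2$, $\mathbb{B}_2$, $\mathbb{G}_2$ cases, which the paper handles by citation of \cite[Example 1.56]{Kul23}, and your rank-$2$ recursion outputs ($\epsilon_{st}+2\epsilon_{1}$ of row sum $3$, and $\epsilon_{sts}+4\epsilon_{1}+2\epsilon_s+2\epsilon_t$ of row sum $9$) match the paper's.
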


\begin{proof}
That the basic algebra associated to the principal block of $\mathfrak{sl}_2$ has a basic regular exact Borel subalgebra is well-known, see e.g.~\cite[Appendix A.1]{KKO14}. 
Assume that $\mathfrak{g}$ is not $\mathfrak{sl}_2$, i.e.~its rank is greater than $1$. In this case it is possible to choose a parabolic $\mathfrak{p}=\mathfrak{l}\oplus \mathfrak{u}$ with Levi subalgebra $\mathfrak{l}$ whose derived subalgebra $\mathfrak{s}$ has rank $2$. In case that $\mathfrak{g}$ is simple, $\mathfrak{s}$ can be chosen to be simple. In this situation, there is a Serre subquotient of the principal block of BGG category $\mathcal{O}(\mathfrak{g})$ which is equivalent to $\mathcal{O}(\mathfrak{l})\simeq \mathcal{O}(\mathfrak{s})$, whose labels of the corresponding simple modules form a coideal in the Weyl group, equipped with the Bruhat order, see \cite[Section 6.2]{CMZ19}. In the case of module categories of finite-dimensional algebras (to which blocks of BGG category $\mathcal{O}$ are equivalent to), Serre subcategories and Serre quotients correspond to idempotent quotients $A/AeA$ and idempotent subalgebras $eAe$, respectively. Therefore, the preceding proposition applies and it suffices to show the claim about the multiplicity of projective indecomposables in $A$ for the (semi)simple Lie algebras of rank $2$, i.e.~Dynkin types $\mathbb{A}_2$, $\mathbb{B}_2$, $\mathbb{G}_2$ (and $\mathbb{A}_1\times \mathbb{A}_1$). For the first three, in \cite[Example 1.56]{Kul23} the multiplicities of indecomposable projectives in $A$ have been computed. They are given by the formula 
\[\ell_w=\begin{cases}1&\text{if $w$ is minimal}\\3^{\operatorname{ht}(w)-1}&\text{else,}\end{cases}\]
where $\operatorname{ht}(w)$ denotes the height of $w$ in the Weyl group $W$ equipped with the Bruhat order. For Dynkin types $\mathbb{A}_2$, $\mathbb{B}_2$, and $\mathbb{G}_2$, the maximal height is $3$, $4$, and $6$, respectively. In particular, in each of the cases, there is an indecomposable projective with multiplicity at least $9$. In the remaining  case, $\mathbb{A}_1\times \mathbb{A}_1$, the principal block is isomorphic to the tensor product of the principal block of $\mathfrak{sl}_2$ with itself. For the quiver and relations for the corresponding algebra as well as the standard modules, the reader can e.g.~consult \cite[Section 3.1]{BKM01}. Using a similar calculation as in \cite[Example 1.56]{Kul23}, one can show that $\ell_w=3$ for $w$ maximal in the Bruhat order for $\mathbb{A}_1\times \mathbb{A}_1$, noting that all the relevant multiplicities $[\nabla_{\mathtt{j}}:L_{\mathtt{i}}]$, $\dim\Hom(\Delta_\mathtt{i},\Delta_\mathtt{j})$ and $[\Delta_{\mathtt{i}}:L_{\mathtt{j}}]$ are equal to one.
\end{proof}

Showing just (i) in the above corollary is in fact easier using the criterion in \cite[Proposition 5.1 (4)]{Conde2021b}. Namely, a basic quasi-hereditary algebra has an exact Borel subalgebra if and only if $\rad{\Delta_\mathtt{i}}\in \mathcal{F}(\nabla)$ for all $\mathtt{i}\in Q_0$. For a block of BGG category $\mathcal{O}$ associated to a finite-dimensional semisimple complex Lie algebra, the unique projective Verma module $\Delta_\mathtt{i}$ has simple socle given by the unique simple Verma module. Furthermore, this simple appears in the projective Verma module with multiplicity $1$.  In addition, for the principal block, the projective Verma module has length at least the order of the corresponding Weyl group $W$. Thus, if $\mathfrak{g}\neq \mathfrak{sl}_2$, and if the radical of the projective Verma module had a $\nabla$-filtration, besides the simple dual Verma module, it would have to include at least one additional dual Verma module. However, every dual Verma module has the simple Verma module as one of its composition factors, a contradiction.

\bibliographystyle{alpha}
\bibliography{publication}

\end{document}